\documentclass[11pt]{amsart}

\usepackage{amssymb}
\usepackage[cmtip,all]{xy}
\usepackage{pslatex} 
\usepackage{enumerate}
\usepackage{pstricks,pst-node}
\advance\hoffset by -20mm
\setlength{\textwidth}{158mm}
\setlength{\textheight}{210mm}

\newtheorem{thm}{Main Theorem}
\newtheorem{theorem}{Theorem}
\newtheorem{prop}[thm]{Proposition}
\newtheorem{lem}[thm]{Lemma}
\newtheorem{cor}[thm]{Corollary}

\theoremstyle{definition}
\newtheorem{definition}[thm]{Definition}

\theoremstyle{remark}
\newtheorem{rem}[thm]{Remark}

\numberwithin{equation}{section}


\newcommand{\Z}{\mathbb{Z}}
\newcommand{\C}{\mathbb{C}}

\newcommand{\F}{\mathbb{F}}
\newcommand{\CP}{\mathbb{P}}
\newcommand{\cpo}{\mathbb{P}^1}
\newcommand{\cpd}{\mathbb{P}^2}
\newcommand{\cpt}{\mathbb{P}^3}

\newcommand{\pt}{\text{pt}}
\newcommand{\reste}{R}

\newcommand{\ind}{{\rm ind}}
\newcommand{\bs}{\mathcal{B}}
\newcommand{\jun}{\mathrm{TA}}
\newcommand{\GTA}{\mathrm{GTA}}
\newcommand{\M}{\mathcal{M}}

\DeclareMathOperator{\tr}{Tr}
\DeclareMathOperator{\Aut}{Aut}

\newcommand{\mygraph}[1]{\xybox{\xygraph{#1}}}
\newcommand{\size}{0.56cm}
\newcommand{\sizemodel}{0.38cm}
\newcommand{\gros}[1]{\mbox{\bf \LARGE \it #1}}

\newcommand{\ExpleRefS}{
\mygraph{ !{<0cm,0cm>;<\size,0cm>:<0cm,\size>::}
!{(0,0)}="A"
!{(2,0)}="B"
!{(0,1)}*{S = \cpd}
"A"-_{E_0}^(0.6){+1}"B"} 
}

\newcommand{\ExpleRefSprime}{
\mygraph{ !{<0cm,0cm>;<\size,0cm>:<0cm,\size>::}
!{(0,0)}="A"
!{(2,0)}="B"
!{(0,1)}*{S'}
"A"-_{E'_0}|<(0.8){\star}"B"} 
}

\newcommand{\ExpleRefSdouble}{
\mygraph{ !{<0cm,0cm>;<\size,0cm>:<0cm,\size>::}
!{(0,0)}="A"
!{(2,0)}="B"
!{(0,1)}*{S''}
"A"-_{E''_0}|<(0.2){\star}|<(0.8){\star}"B"} 
}

\newcommand{\ExpleRefX}{
\mygraph{ !{<0cm,0cm>;<\size,0cm>:<0cm,\size>::}
!{(0,0)}="A"
!{(2,0.5)}="B"
!{(1.5,0.5)}="C"
!{(3.5,0)}="D"
!{(3,0)}="E"
!{(5,0.5)}="F"
!{(4.5,0.5)}="G"
!{(6.5,0)}="H"
!{(1,2)}*{X = \tilde S''}
"A"-^{E_0}_{-2}"B" "C"-^{E_0''}_{-1}"D"
"E"-^{E_0'}_{-2}"F" "G"-^{E}_{-2}"H" } 
}

\newcommand{\ExpleRefSprimetilde}{
\mygraph{ !{<0cm,0cm>;<\size,0cm>:<0cm,\size>::}
!{(0,0)}="A"
!{(2,0.5)}="B"
!{(1.5,0.5)}="C"
!{(3.5,0)}="D"
!{(0,1)}*{\tilde S'}
"A"-^{E_0'}_0"B" "C"-^{E}_{-2}"D"} 
}

\newcommand{\dessinresolutionbasic}{
\mygraph{
!{<0cm,0cm>;<\size,0cm>:<0cm,\size>::}
!{(0,-2)  }="A" !{(4,-1)  }="B"
!{(3,-1)  }="C" !{(7,-2)  }="D"
!{(6,-2)  }="E" !{(10,-1) }="F"
!{(4.5,-2)}="G" !{(5.5,1)}="H"
!{(5.5,0)}="I" !{(4.5,3)}="J"
"A"-^{C_0}_{-1}"B" "C"-^(0.6){C_3}_(0.6){-2}"D" "E"-^{C_4}_{-1}"F" 
"G"-^{C_2}_{-2}"H" "I"-^{C_1}_{-2}"J"
}}

\newcommand{\dessinpreuve}{
\mygraph{ !{<0cm,0cm>;<\size,0cm>:<0cm,\size>::}
!{(0,0)  }="A" !{(4,1)  }="B"
!{(3,1)  }="C" !{(7,0)  }="D"
!{(1,0)  }="E" !{(0,3)  }="F"
!{(6,0)  }="G" !{(7,3)  }="H"
!{(7,2)  }="I" !{(6,5)  }="J"
!{(3,3)}*{\gros{Y}}
"A"-_{E_0}"B" "C"-_{E_1}"D"
"E"-@{~}^{\rm Sing}"F" "G"-@{~}^{\rm Aux}"H" "I"-@{~}^{\rm Sing}"J" 
}}

\newcommand{\dessinTa}{
\mygraph{ !{<0cm,0cm>;<\size,0cm>:<0cm,\size>::}
!{(0,-2)  }="A" !{(4,-1)  }="B"
!{(3,-1)  }="C" !{(7,-2)  }="D"
!{(1,-2)  }="E" !{(0,1)  }="F"
!{(6,-2)  }="G" !{(7,1)  }="H"
!{(7,0)  }="I" !{(6,3)  }="J"
"A"-_{E_i}^{-1}"B" "C"-_{E_{i+1}}^{-1}"D"
"G"-@{~}_{\rm Sing}^>(.7){-k,-2,\cdots,-2}"H"  
}}

\newcommand{\dessinTb}{
\mygraph{ !{<0cm,0cm>;<\size,0cm>:<0cm,\size>::}
!{(0,-2)  }="A" !{(4,-1)  }="B"
!{(3,-1)  }="C" !{(7,-2)  }="D"
!{(1,-2)  }="E" !{(0,1)  }="F"
!{(6,-2)  }="G" !{(7,1)  }="H"
!{(7,0)  }="I" !{(6,3)  }="J"
"A"-_{E_i}^{-1}"B" "C"-_{E_{i+1}}^{-1}"D"
"E"-@{~}^{\rm Sing}_>(.7){-2,\cdots,-2,-(k-1)}"F"  
}}

\newcommand{\dessinModeleI}{
\mygraph{
!{<0cm,0cm>;<\size,0cm>:<0cm,\sizemodel>::}
!{(2,-2) }="A1" !{(1,1)  }="A2" 
!{(1,0)  }="B1" !{(2,3)  }="B2" 
!{(2,2)  }="C1" !{(1,5)  }="C2"
!{(1,4)  }="D1" !{(2,7)  }="D2"
!{(1,6.5)  }="E1" !{(6,6.5)  }="E2"
!{(5,7)  }="F1" !{(5,3)  }="F2"
!{(1,5.5)  }="S1" !{(4,4)  }="S2"
!{(4.5,2)}*{\mbox{Model I}}
"A1"-@{.}^{E_3}_{-1}"A2" "B1"-^{E_2}_{-2}"B2" "C1"-^{E_1}_{-2}"C2" "D1"-^>(.6)l_>(.6){-2}"D2" 
"E1"-^>(.4){C}^>(.6){-1}"E2" "F1"-^{0}^>(.8)B"F2"
"S1"-@{.}^>(0.8){E_4}_>(0.8){-1}"S2"
}}

\newcommand{\dessinModeleII}{
\mygraph{
!{<0cm,0cm>;<\size,0cm>:<0cm,\sizemodel>::}
!{(2,-2) }="A1" !{(1,1)  }="A2" 
!{(1,0)  }="B1" !{(2,3)  }="B2" 
!{(2,2)  }="C1" !{(1,5)  }="C2"
!{(1,4)  }="D1" !{(2,7)  }="D2"
!{(1,6.5)  }="E1" !{(6,6.5)  }="E2"
!{(5,7)  }="F1" !{(5,3)  }="F2"
!{(1,1.5)  }="S1" !{(4,1)  }="S2"
!{(5.5,2)}*{\mbox{Model III}}
"A1"-@{.}^{E_1}_{-3}"A2" "B1"-^>(.55){E_3}_>(.3){-2}"B2" "C1"-^{E_2}_{-2}"C2" "D1"-^>(.6)l_>(.6){-2}"D2" 
"E1"-^>(.4){C}^>(.6){-1}"E2" "F1"-^{0}^>(.8)B"F2"
"S1"-@{.}^>(0.8){E_4}_>(0.8){-1}"S2"
}}

\newcommand{\dessinModeleIII}{
\mygraph{
!{<0cm,0cm>;<\size,0cm>:<0cm,\sizemodel>::}
!{(2,-2) }="A1" !{(1,1)  }="A2" 
!{(1,0)  }="B1" !{(2,3)  }="B2" 
!{(2,2)  }="C1" !{(1,5)  }="C2"
!{(1,4)  }="D1" !{(2,7)  }="D2"
!{(1,6.5)  }="E1" !{(6,6.5)  }="E2"
!{(5,7)  }="F1" !{(5,3)  }="F2"
!{(1,3)  }="S1" !{(4,4)  }="S2"
!{(4.5,2)}*{\mbox{Model II}}
"A1"-@{.}^{E_4}_{-1}"A2" "B1"-^{E_3}_{-2}"B2" "C1"-^>(.6){E_2}_>(.6){-2}"C2" "D1"-^l_{-2}"D2" 
"E1"-^>(.4){C}^>(.6){-1}"E2" "F1"-^{0}^>(.8)B"F2"
"S1"-@{.}^>(0.8){E_1}_>(0.8){-2}"S2"
}}

\newcommand{\dessinFun}{
\mygraph{
!{<0cm,0cm>;<0.4cm,0cm>:<0cm,0.4cm>::}
!{(2,4)  }="D1" !{(2,1)  }="D2"
!{(1,3.5)}="E1" !{(6,3.5)  }="E2"
!{(5,4)  }="F1" !{(5,1)  }="F2"
!{(4,1)}*{\F_1}
"D1"-_l^{0}"D2" "E1"-^C_{-1}"E2" "F1"-_0^{B}"F2"
}}

\newcommand{\dessinFdeuxbis}{
\mygraph{
!{<0cm,0cm>;<0.4cm,0cm>:<0cm,0.4cm>::}
!{(1,3.5)}="E1" !{(6,3.5)  }="E2"
!{(2,4)  }="F1" !{(2,1)  }="F2"
!{(4,1)}*{\F_2}
"E1"-@{.}_{-2}^{E_1}"E2" "F1"-@{.}^{0}_{E_4}"F2"
}}

\newcommand{\dessinFzero}{
\mygraph{
!{<0cm,0cm>;<0.4cm,0cm>:<0cm,0.4cm>::}
!{(1,3.5)}="E1" !{(6,3.5)  }="E2"
!{(2,4)  }="F1" !{(2,1)  }="F2"
!{(4,1)}*{\F_0}
"E1"-@{.}_{0}^{E_1}"E2" "F1"-@{.}^{0}_{E_4}"F2"
}}

\newcommand{\dessinFzerobis}{
\mygraph{
!{<0cm,0cm>;<0.4cm,0cm>:<0cm,0.4cm>::}
!{(1,3.5)}="E1" !{(6,3.5)  }="E2"
!{(2,4)  }="F1" !{(2,1)  }="F2"
!{(4.5,1)}*{\F_0 \mbox{ or } \F_2}
"F1"-@{.}^{0}_{E_3}"F2" "E1"-@{.}^>(.6){0 \mbox{ or }-2}"E2"
}}

\newcommand{\ModeleI}{
\mygraph{
!{<0cm,0cm>;<\size,0cm>:<0cm,\size>::}
!{(0,0)  }*+{\dessinFzerobis}="F0"
!{(0,6)  }*{\dessinModeleI}="MI"
"MI"-@{->}"F0" 
}}

\newcommand{\ModeleII}{
\mygraph{
!{<0cm,0cm>;<\size,0cm>:<0cm,\size>::}
!{(0,0)  }*+{\dessinFdeuxbis}="F2"
!{(0,6)  }*{\dessinModeleII}="MII"
"MII"-@{->}"F2" 
}}

\newcommand{\ModeleIII}{
\mygraph{
!{<0cm,0cm>;<\size,0cm>:<0cm,\size>::}
!{(0,0)  }*+{\dessinFzero}="F0"
!{(0,6)  }*{\dessinModeleIII}="MIII"
"MIII"-@{->}"F0" 
}}

\newcommand{\dessinTrFzero}{
\mygraph{
!{<0cm,0cm>;<0.4cm,0cm>:<0cm,0.4cm>::}
!{(1,3.5)}="E1" !{(6,3.5)  }="E2"
!{(2,4)  }="F1" !{(2,1)  }="F2"
!{(4.5,1)}*{\F_0}
"F1"-^{0}_{E_3}"F2" "E1"-_0^{\Delta}"E2"
}}

\newcommand{\dessinTrFun}{
\mygraph{
!{<0cm,0cm>;<0.4cm,0cm>:<0cm,0.4cm>::}
!{(.5,3.5)}="H1" !{(6,3.5)  }="H2"
!{(1,4)  }="G1" !{(1,.5)  }="G2"
!{(.5,1)}="B1" !{(6,1)  }="B2"
!{(5.5,4)  }="D1" !{(5.5,.5)  }="D2"
!{(-1.5,1.85)  }="delD" !{(3.25,1.75)  }="delM" !{(6.1,4)  }="delF"
!{(-1,3)}*{\F_1}
"G1"-_>(.4){l}_>(.6){0}"G2" "H1"-^>(.6){-1}^>(.4){C}"H2"
"B1"-_>(.6){0}_>(.4){L}"B2" "D1"-^>(.4){B}^>(.6){0}"D2"
"delD"-@/_.32cm/"delM" "delM"-^<{\Delta}_>(.3){+3}@/_.05cm/"delF"
}}

\newcommand{\dessinTrFdeux}{
\mygraph{
!{<0cm,0cm>;<0.4cm,0cm>:<0cm,0.4cm>::}
!{(1,3.5)}="E1" !{(6,3.5)  }="E2"
!{(2,4)  }="F1" !{(2,1)  }="F2"
!{(4.5,1)}*{\F_2}
"F1"-^{0}_{E_3}"F2" "E1"-^{C_0= L}_{-2}"E2"
}}

\newcommand{\dessinTrProj}{
\mygraph{
!{<0cm,0cm>;<0.4cm,0cm>:<0cm,0.4cm>::}
!{(2.5,3.5)  }="G1" !{(1,.5)  }="G2"
!{(-.3,0.9)}="B1" !{(5,1.1)  }="B2"
!{(0,4.3)  }="D1" !{(3.8,.8)  }="D2"
!{(-1,1.2)  }="delD" !{(0,3.9)  }="delF"
!{(2,2.4)}*{\bullet}
!{(-1.5,3)}*{\mathbb{P}^2}
"G1"-_>(.6){l}"G2" 
"B1"-_>(.9){L}"B2" "D1"-^>(.1){B}^>(.6){q}"D2"
"delD"-@/_1.3cm/^>(.1){+4}^<{\Delta}"delF" 
}}

\newcommand{\dessinTrHaut}{
\mygraph{
!{<0cm,0cm>;<\size,0cm>:<0cm,\sizemodel>::}
!{(2,-2) }="A1" !{(1,1.5)  }="A2" 
!{(1,0.5)  }="B1" !{(2,4)  }="B2" 
!{(2,3)  }="C1" !{(1,6.5)  }="C2"
!{(1,5.5)  }="D1" !{(2,9)  }="D2"
!{(1,8.5)  }="E1" !{(4.5,8.5)  }="E2"
!{(5.5,6.5)  }="F1" !{(5.5,1)  }="F2"
!{(4.8,6.7)  }="B'H" !{(4,1.9)  }="B'B"
!{(1,7.5)  }="S1" !{(3.3,6)  }="S2"
!{(4,9)  }="ExH" !{(6,6)  }="ExB"
!{(1,2.4)  }="delD" !{(4.4,2.7)  }="delF" 
!{(1,-2)  }="BD" !{(6,2)  }="BF" 
"A1"-^{E_3}_>(.6){-1}"A2" "B1"-^>(.35){E_2}^>(.65){-2}"B2" "C1"-^{E_1}_{-2}"C2" "D1"-^>(.6)l_>(.6){-2}"D2" 
"B'H"-^>(.6){B'}_>(.6){-1}"B'B" 
"E1"-_{-1}^>(.6)C"E2" "F1"-^{-1}^>(.65)B"F2"
"S1"-^>(0.8){E_4}_>(0.95){-1}"S2" 
"ExH"-^>(.6){-2}@/_0.5cm/"ExB" 
"delD"-_>(.35){\Delta}^>(.45){-1}@/_0.3cm/"delF"
"BD"-@/^0.2cm/_>(.5){L}_>(.65){-2}"BF"
}}

\newcommand{\dessinTrDeb}{
\mygraph{
!{<0cm,0cm>;<\size,0cm>:<0cm,\sizemodel>::}
!{(2,-2) }="A1" !{(1,1)  }="A2" 
!{(1,0)  }="B1" !{(2,3)  }="B2" 
!{(2,2)  }="C1" !{(1,5)  }="C2"
!{(1,4)  }="D1" !{(2,7)  }="D2"
!{(1,6.5)  }="E1" !{(6,6.5)  }="E2"
!{(5,7)  }="F1" !{(5,1)  }="F2"
!{(1,5.5)  }="S1" !{(3.3,4)  }="S2"
!{(1,2)  }="delD" !{(3,2)  }="delM" !{(5.3,7)  }="delF" 
!{(1,-2)  }="BD" !{(6,2)  }="BF" 
!{(2,8)}*{\hat{S}\mbox{ (model I)}}
"A1"-^{E_3}_>(.6){-1}"A2" "B1"-^>(.35){E_2}^>(.65){-2}"B2" "C1"-^{E_1}_{-2}"C2" "D1"-^>(.6)l_>(.6){-2}"D2" 
"E1"-_{-1}^>(.6)C"E2" "F1"-^{0}^>(.8)B"F2"
"S1"-^>(0.8){E_4}_>(0.95){-1}"S2" 
"delD"-@/_.5cm/"delM" "delM"-_>(.35){\Delta}_>(.2){+1}@/^0.1cm/"delF"
"BD"-@/^0.2cm/_>(.5){L}_>(.65){-2}"BF"
}}

\newcommand{\dessinTrFin}{
\mygraph{
!{<0cm,0cm>;<\size,0cm>:<0cm,\sizemodel>::}
!{(2,-2) }="A1" !{(1,1)  }="A2" 
!{(1,0)  }="B1" !{(2,3)  }="B2" 
!{(2,2)  }="C1" !{(1,5)  }="C2"
!{(1,4)  }="D1" !{(2,7)  }="D2"
!{(1,6.5)  }="E1" !{(6,6.5)  }="E2"
!{(5,7)  }="F1" !{(5,1)  }="F2"
!{(1,5.5)  }="S1" !{(3.3,4)  }="S2"
!{(1,2)  }="delD" !{(5.5,2.5)  }="delF" 
!{(1,-2)  }="BD" !{(4,0)  }="BM1" 
!{(4.2,6.9)  }="BF" !{(5.5,6)  }="BM2" 
!{(4.5,8)}*{\hat{S}'\mbox{ (model I)}}
"A1"-^{E_3}_>(.6){-1}"A2" "B1"-^>(.35){E_2}^>(.65){-2}"B2" "C1"-^{E_1}_{-2}"C2" "D1"-^>(.6)l_>(.6){-2}"D2" 
"E1"-_>(.6){-1}^C"E2" "F1"-^>(.55){0}^>(.4){B'}"F2"
"S1"-^>(0.8){E_4}_>(0.95){-1}"S2" 
"delD"-@/_.4cm/_>(.45){\Delta}^>(.55){-1}"delF"
"BD"-@/^0.2cm/"BM1" "BM1"-@/_1.3cm/_>(.5){0}_>(.6){L}"BM2" 
"BM2"-@/_0.05cm/"BF"
}}

\usepackage[colorlinks, linktocpage, citecolor = blue, linkcolor = blue]{hyperref}

\begin{document}

\title{Automorphisms of open surfaces with irreducible boundary}

\author{Adrien Dubouloz}
\address{Institut de Math\'ematiques de Bourgogne, Universit\'e de Bourgogne, 9 avenue Alain Savary - BP 47870, 21078 Dijon cedex, France}
\email{adrien.dubouloz@u-bourgogne.fr}

\author{St\'ephane Lamy}
\address{Institut de Math\'ematiques de Toulouse,
	     Universit\'e Paul Sabatier, 	
	     118 route de Narbonne,
	     31062 Toulouse Cedex 9, France}
\email{slamy@math.univ-toulouse.fr}
\thanks{The second author was partially supported by a Marie Curie Intra European Fellowship to visit the Mathematics Institute of Warwick, on leave from the Institut Camille Jordan, Universit\'e Lyon 1.}
\thanks{Both authors were partially supported by ANR Grant ``BirPol''  ANR-11-JS01-004-01.}
\date{March 2014}

\begin{abstract}
Let $(S,B_S)$ be the log pair associated with a projective completion of a  smooth quasi-projective surface $V$. 
Under the assumption that the boundary $B_S$ is irreducible, we obtain an algorithm to factorize any automorphism of $V$ into a sequence of simple links.
This factorization lies in the framework of the log Mori theory, with the property that all the blow-ups and contractions involved in the process occur on the boundary. 
When the completion $S$ is smooth, we obtain a description of the automorphisms of $V$ which is reminiscent of a presentation by generators and relations except that the ``generators" are no longer automorphisms. 
They are instead isomorphisms between different models of $V$ preserving certain rational fibrations. 
This description enables one to define normal forms of automorphisms and leads in particular to a natural generalization of the usual notions of affine and Jonqui\`eres automorphisms of the affine plane. 
When $V$ is affine, we show however that except for a finite family of surfaces including the affine plane, the group generated by these affine and Jonqui\`eres automorphisms, which we call the tame group of $V$, is a proper subgroup of $\Aut(V)$.

\end{abstract}
%

\maketitle

\section*{Introduction}

Smooth affine surfaces with a rich group of algebraic automorphisms have been intensively studied after the pioneering work of M.H. Gizatullin and V.I. Danilov in the seventies. 
Affine surfaces whose automorphism group acts with a dense orbit with finite complement were first characterized by M.H. Gizatullin \cite{G} in terms of the structure of their boundary divisors in smooth minimal projective completions.
Namely, except for finitely many exceptional cases, these surfaces are precisely those which admit completions by chains of proper nonsingular rational curves. 
Their automorphism groups have been studied by V.I. Danilov and M.H. Gizatullin in a series of papers \cite{GD1,GD2}. 
They established in particular that their automorphism groups can be described as fundamental groups of  graphs of groups attached to well-chosen families of projective completions. 
The vertices of these graphs correspond to classes up to isomorphism of suitable projective models of the affine surfaces under consideration while the arrows are determined by certain birational relations between these. 
It is however difficult to extract from them more concrete geometric properties of automorphisms or the existence of interesting subgroups due to the fact that they have in general uncountably many vertices and uncountably many edges between any pairs of vertices.
  
Affine surfaces $V$ as above have the nice geometric property that they come equipped with families of $\mathbb{A}^{1}$-fibrations $\pi \colon V\rightarrow\mathbb{A}^{1}$, that is, surjective morphisms with general fibers isomorphic to the affine line. 
The original approach of M.H. Gizatullin and V.I. Danilov has been recently reworked by J. Blanc and the first author \cite{BD} with a particular focus on the interactions between automorphisms and these fibrations. 
This led to introduce simpler graphs encoding  equivalence classes of rational fibrations from which it is possible to decide for instance if the automorphism group of $V$ is generated by automorphisms preserving these fibrations. 
However the methods used in \emph{loc. cit.} remain close to the ones introduced by M.H. Gizatullin and V.I. Danilov, depending in particular on properties of birational maps that are a priori specific to the $2$-dimensional case.

As a step towards a hypothetical theory to study automorphisms of higher dimensional affine varieties by methods of birational geometry, it is natural to try to reformulate these existing results in the framework of log Mori theory. 
Since every smooth affine surface admitting a completion by a chain of smooth rational curves admits in fact such a completion by a particular chain $C_{0},C_{1},\ldots,C_{r}$, $r\geq1$, whose self-intersections are respectively $0,a_{1},\ldots,a_{r}$, where $a_{1}\le-1$ and $a_{i}\le-2$ for all $i=2,\dots,r$, we see that after contracting the curves $C_{1},\ldots,C_{r}$, we obtain a completion by a possibly singular projective surface $S$ with an irreducible boundary $B_{S}=C_{0}$. 
So given a smooth quasi-projective surface $V$, we would like more generally to describe the automorphism group of $V$ say when there exists a completion $S  \supset V$ where $S$ is a possibly singular projective surface with $S \smallsetminus V$ equal to an irreducible curve.  
More precisely, we look for a factorization in the framework of the log Mori theory for automorphisms of $V$ that do not extend as biregular automorphisms on $S$. 

When  $V$ admits a completion into a (log) Mori fiber space $S$, and $f\colon S\dashrightarrow S$  is the birational self-map  associated to an automorphism of $V$, the (log) Sarkisov program gives a factorization of $f$ into so-called elementary links between intermediate (log) Mori fiber spaces. 
As already expressed in \cite{BM}, the hope is that a  refinement of such an algorithm could allow to understand the structure of polynomial automorphisms of the affine $3$-space $\mathbb{A}^3$. 
Here we have in mind to complete  $\mathbb{A}^3$ by the projective space $\cpt$, and to apply the algorithm to the birational map from $\cpt$ to $\cpt$ induced by an automorphism of $\mathbb{A}^3$. 
It seems  natural to expect an algorithm which is \emph{proper}, that is where all the blow-ups and contractions occur on the boundary divisor. 

A natural first step is to check if at least in the $2$-dimensional case, the log Sarkisov program satisfies this property, and so could  be used to give a good description of the automorphism groups of quasi-projective surfaces $V$ admitting completions into log Mori fiber spaces.  
But maybe surprisingly it turns out that applying the log Sarkisov program to such a completion $S$ does not provide a satisfactory description: 
In general the links occurring in a factorization of a birational transformation of $S$ induced by an automorphism of $V$ do not preserve the inner quasi-projective surface $V$ (see Proposition \ref{pro:notproper}). 
This is not the case for $\mathbb{A}^2$, but it is worth noting that the phenomenon occurs for the 3-dimensional affine space:  
There exist some automorphisms of $\mathbb{A}^3$ for which the usual Sarkisov factorization is \emph{not} proper (see \cite[\S 1.2.3]{HDR}). \\

This motivated the search for an alternative algorithm for which all the blow-ups and contractions would occur on the boundary divisor.
It is such an algorithm, together with applications and examples, that we propose in this paper, the main point being a shift in focus from the existence of completions with a log Mori fiber space structure to the existence of completions by one irreducible divisor. This last property might turn out to be the right one for studying automorphisms of $\mathbb{A}^3$. 

Before stating our main result, let us introduce the class of \emph{dlt completions} of a smooth quasi-projective surface $V$: 
These are divisorially log terminal pairs $(S,B_S)$ consisting of a projective completion  $S$ of $V$ and a reduced boundary divisor $B_S= \sum E_i$, such that the support of $B_S$ is exactly $S\smallsetminus V$. 
Also, by a \emph{strictly birational map of dlt completions} we mean a birational map $f:(S,B_S)\dashrightarrow (S',B_{S'})$ which induces an isomorphism $S\smallsetminus B_S\rightarrow S'\smallsetminus B_{S'}$ and which is not a biregular isomorphism. With these definitions, our factorization result reads as follows. 

\begin{theorem}\label{th:main}
Let $f\colon V\stackrel{\sim}{\rightarrow}V'$ be an isomorphism of smooth quasi-projective surfaces,  and let $S,S'$ be dlt completions of $V$ and $V'$ with irreducible boundary divisors  $B_S, B_{S'}$. 
Then if the induced map  $f\colon  S \dashrightarrow S'$ is strictly birational, we can factorize $f$ into a finite sequence of $n$ links of the following form
 $$\xymatrix{
 &Z_i \ar[dl] \ar[dr] \\
 S_{i-1} && S_i
 }$$
where  $S_0 = S,S_1, \dots,S_n = S'$ are dlt completions of  $V$ with an irreducible boundary,  $Z_i$ is for all $i = 1,\dots,n$ a dlt completion of $V$ with two boundary components,  and $Z_i \to S_{i-1}$, $Z_i \to S_i$ are the divisorial contractions associated with each one of the two $K+B$ extremal rays with support in the boundary of ${Z_i}$. 
\end{theorem}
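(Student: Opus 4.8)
The plan is to run the log Minimal Model Program relative to the open locus $V$, keeping track of the constraint that the boundary is irreducible on both ends of each step. Start with the dlt pair $(S,B_S)$, where $B_S$ is the single irreducible curve $S\smallsetminus V$. Since $f\colon S\dashrightarrow S'$ is strictly birational, at least one of the two pairs is not $K+B$-minimal among dlt completions of $V$ with irreducible boundary; the strategy is to resolve the indeterminacy of $f$ by a common dlt completion $(W,B_W)$ dominating both $S$ and $S'$, and then to argue that $W$ can be reached from each side by a chain of the elementary links described in the statement. The key local model is the one already fixed in the introduction: a dlt completion with \emph{two} boundary components $Z$, together with the two $K+B$-extremal rays supported in $B_Z$, each giving a divisorial contraction $Z\to S_{i-1}$ and $Z\to S_i$ onto dlt completions with irreducible boundary.

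First I would set up the birational geometry carefully. Given a strictly birational $f$, blow up the indeterminacy so as to obtain a smooth (or dlt) surface $Y$ with $Y\to S$ and $Y\to S'$ isomorphisms over $V$, and with $B_Y$ the total transform of the boundary, a tree of rational curves (as in the figures $\dessinpreuve$-type picture). Then I would run a $K_Y+B_Y$-MMP over $S$ relative to $V$: each step contracts a $K+B$-negative extremal ray, and because everything happens over $V$, every contracted curve lies in $B_Y$, so the process is \emph{proper} in the sense of the introduction. The first nontrivial point is to check that each such contraction, when performed on a dlt completion with exactly two boundary components, is exactly a divisorial contraction of the type appearing in the statement and produces again a dlt completion — that is, that the intermediate surfaces never acquire a third boundary component if set up correctly, and that the singularities stay dlt. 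This is where the self-intersection bookkeeping on chains/trees of rational curves ($0,a_1,\dots,a_r$ with $a_1\le -1$, $a_i\le -2$) does the work: contracting the appropriate $(-1)$-curve or its image merges two adjacent chains and drops the number of boundary components by one, while the reverse blow-up (which is the other contraction $Z_i\to S_i$) splits it back. I would formalize this as: a dlt completion of $V$ with irreducible boundary has, sitting above it, canonically associated dlt completions with two boundary components, and the two $K+B$-extremal rays on such a $Z$ are always contractible in the category of dlt completions.

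Next, the combinatorial heart: I would show that the "distance'' between $(S,B_S)$ and $(S',B_{S'})$ — measured by, say, the number of blow-ups in the minimal resolution of $f$, or a suitable complexity of $B_Y$ — strictly decreases under a well-chosen link $S_{i-1}\leftarrow Z_i\rightarrow S_i$, so that after finitely many links we reach a completion from which $f$ extends biregularly, at which point we stop (having used $n$ links). Concretely: pick the extremal ray on the $S$-side of $Y$, contract along it to reach $Z_1$ with two boundary components, then contract the \emph{other} $K+B$-extremal ray of $Z_1$ to obtain the new irreducible-boundary completion $S_1$; iterate. Termination follows because $K+B$-MMP over a fixed base terminates for surfaces, and each link reduces the Picard rank difference over $V$. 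I expect the \textbf{main obstacle} to be precisely the verification that the required extremal rays always exist and are contractible within the dlt category with the prescribed number of boundary components — i.e. that one never gets stuck at a dlt completion with irreducible boundary that is $K+B$-minimal yet still not isomorphic to $S'$. Ruling this out amounts to a case analysis on the dual graph of $B$ near the contracted curve (the "$-1$'' vertex and its neighbours), showing the only obstruction to continuing is genuine biregularity of $f$; the finitely many exceptional surfaces flagged in the abstract are exactly those where this analysis has to be done by hand. Once that local study is in place, assembling the global factorization and checking the two contractions $Z_i\to S_{i-1}$, $Z_i\to S_i$ are the divisorial contractions of the two $K+B$-extremal rays supported in the boundary is routine.
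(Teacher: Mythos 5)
Your overall strategy (resolve the indeterminacy, contract boundary components by $K+B$-extremal divisorial contractions, induct) is the same as the paper's, but as written the plan has two concrete gaps that would prevent it from closing. First, you never specify \emph{which} two boundary components of the resolution survive to form $Z_1$, and this choice is the heart of the matter. The paper first proves (Proposition \ref{prop:admissible}) that $f$ has a unique proper base point and that the boundary $B_X$ of the minimal resolution is a tree of rational curves, all of negative self-intersection; hence there is a unique sub-chain $E_0,E_1,\dots,E_n=E_0'$ joining the strict transforms of $B_S$ and $B_{S'}$. The first link keeps exactly $E_0$ and $E_1$ (the next curve along that chain) and contracts everything else, and the induction is on the length $n+1$ of the chain. ``Pick the extremal ray on the $S$-side of $Y$'' does not determine this: there may be several boundary components that are $K+B$-extremal, and an arbitrary choice need not move you toward $S'$. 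Relatedly, ``run a $K_Y+B_Y$-MMP over $S$'' taken literally would contract $E_1$ as well and terminate at $S$; one has to select by hand the components to contract (all of $B_Y$ except $E_0$ and $E_1$) and verify via the adjunction formula (Lemma \ref{lem:KBneg}) that each successive extremity of the chain, supporting at most one singularity, is indeed $K+B$-negative of negative self-intersection.

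Second, your termination argument does not work. After one link the minimal resolution $X_1$ of $f_1\colon S_1\dashrightarrow S'$ can be equal to $X$ itself (this happens whenever $E_0^2\le -2$ on $X$), and $\rho(S_1)=\rho(S)$ since both are obtained from $Z_1$ by one divisorial contraction; so neither ``the number of blow-ups in the minimal resolution of $f$'' nor any Picard-rank difference over $V$ decreases in general. The quantity that decreases, by exactly one per link, is the length of the distinguished chain $E_0,\dots,E_n$, which is why $n$ is the number of links. Finally, the obstacle you flag --- a case analysis near the contracted curve with ``finitely many exceptional surfaces'' --- is a misreading: the factorization theorem has no exceptional cases (the needed extremal rays always exist by the adjunction computation $(K_S+B_S)\cdot C=-2+\sum(1-\tfrac1{m_i})+n$), and the finite family of surfaces mentioned in the abstract concerns the tame/non-tame dichotomy of Section \ref{sec:3}, not this theorem.
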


The existence of the above decomposition, which was already considered by the authors in \cite{DL} (unpublished), is in fact a  particular case of more general factorization results developed later on by Y.M. Polyakova: see \cite{Pol} where she reformulates the problem  in terms of relations induced by certain classes of birational maps in suitable categories of $2$-dimensional log-terminal pairs. 
This approach certainly provides a nice theoretical framework for studying automorphisms of quasi-projective surfaces in general:  for instance, one can recover from it the description of M.H Gizatullin and V.I. Danilov in terms of fundamental groups of graphs of groups. 
However, it remains too abstract to give precise handle on the properties of these automorphism groups and their subgroups. 
In our view, such a factorization result is only a preliminary step for the understanding of these groups, and a second crucial step consists in  extracting from it some particular classes of birational maps which are relevant for the study of precise properties of these groups. 
For instance, in \cite{BD} the question was to decide whether the automorphism group of an affine surface admitting a completion by a chain of smooth rational curves is generated by automorphisms preserving $\mathbb{A}^1$-fibrations.
The problem was solved by introducing two classes of birational maps called \emph{fibered modifications} and \emph{reversions}, roughly characterized by the respective properties that they preserve an $\mathbb{A}^1$-fibration or exchange it to another one, and then by using an appropriate factorization result to deduce that any automorphism can be decomposed in a finite sequence of such maps. \\

Here, as an application of our factorization result, we follow a similar strategy to describe the structure of the automorphism group of a quasi-projective surface $V$ admitting a smooth completion $(S,B_S)$ with irreducible boundary $B_S\simeq \mathbb{P}^1$, a case which is essentially complementary to the situations in which the combinatorial methods developed in \cite{BD} give a satisfactory description.
Affine surfaces of this type have been first studied by Gizatullin and Danilov \cite{GD2}: 
They established in particular that their isomorphy types as abstract affine surfaces depend only on the self-intersection $B_S^2$ of the boundary $B_S$ in a smooth completion $(S,B_S)$ and not on the choice of a particular smooth completion $S$ or boundary divisor $B_S$ (except in the case $B_S^2 = 4$ where there are two models). 
They described their automorphism groups in terms of the action of certain groups on a ``space of tails'' which essentially encodes the isomorphy types of smooth completions $(S,B_S)$ of a fixed affine surface $V$. 
Here we follow a different approach based on a natural generalizations of the classical notions of \emph{Affine} and \emph{Jonqui\`eres} automorphisms for the affine plane. Roughly, for a given affine surface $V$, affine automorphisms in our sense are characterized by the property that they come as restrictions of biregular automorphisms of various smooth completions $(S,B_S)$ while Jonqui\`eres automorphisms are automorphisms which preserve certain $\mathbb{A}^1$-fibration on $V$. 
With these notions, we obtain a kind of presentation by generators and relations closely related to the one considered by Gizatullin and Danilov in \emph{loc. cit.} and reminiscent of the usual description given by Jung's Theorem for automorphisms of the affine plane. 

It is classical that $\Aut(V)$ is generated by these two classes of automorphisms when $V$ is $\mathbb{A}^2$ or a smooth affine quadric surface and we are able to prove that this holds more generally for every affine surface $V$ admitting a smooth completion $(S,B_S)$ with rational irreducible boundary of self-intersection $B_S^2\leq 4$.
On the other hand, we show that this property fails for those admitting smooth completions $(S,B_S)$ with $B_S^2\geq 5$.
We also derive from our description that if $B_S^2\geq 5$ then $\Aut(V)$ is ``much bigger" than the automorphism group of $\mathbb{A}^2$,  in the sense that the proper normal subgroup  generated by affine and Jonqui\`eres automorphisms of $V$ cannot be generated by a countable family of algebraic subgroups (see Proposition \ref{prop:Affautos}). \\

The article is organized as follows. In Section \ref{sec:2},  we briefly review the log Sarkisov program and we illustrate the reason why it does not provide a satisfactory algorithm to obtain informations about automorphism groups of quasi-projective surfaces.
In Section \ref{sec:1}, we review the geometry of dlt completions, establish our factorization Theorem \ref{th:main} and discuss some of its properties. 
Then in Section \ref{sec:3} we apply our algorithm to the case of quasi-projective surfaces $V$ admitting smooth completions with irreducible boundaries. 
We observe that our algorithm yields a kind of presentation by generators and relations for the automorphisms of $V$ (Proposition \ref{prop:main}) and enables to define a notion of normal forms for automorphisms. 
We then consider the situation where $V$ is affine and discuss the structure of the automorphism group (Proposition \ref{prop:Affautos}). 
Finally, section \ref{sec:4} is devoted to the explicit study of various examples of affine surfaces $V$ admitting smooth completions with irreducible boundaries which illustrate the increasing complexity of the groups $\Aut(V)$ in terms of the self-intersection of their boundary divisors.

\section{Quasi-projective surfaces with log Mori fiber space completions and
the log Sarkisov program}

\label{sec:2} Many interesting quasi-projective surfaces with a rich
automorphism group admit completions into dlt pairs $(S,B_{S})$ which
are log Mori fiber spaces $g\colon(S,B_{S})\to Y$, i.e., $g$ has
connected fibers, $Y$ is a normal curve or a point, and all the curves
contracted by $g$ are numerically proportional and of negative intersection
with the divisor $K_{S}+B_{S}$. Examples of such situations include
the affine plane $\mathbb{A}^{2}$ or quasi-projective surfaces 
obtained as complements of either a section or a fiber in a $\mathbb{P}^1$-bundle 
over a smooth projective curve. In this context, the log Sarkisov program established by
Bruno and Matsuki \cite{BM} gives an effective algorithm to factorize
a birational map $f\colon S\dashrightarrow S'$ between log Mori fiber
spaces into a sequence of elementary links for which we control the
complexity of the intermediate varieties in the sense that at any
step they differ from a log Mori fiber space by at most one divisorial contraction.
As it was established by Takahashi \cite[p.401]{Mat} for the case of $\mathbb{A}^2$, it
seems natural to expect in general that given a quasi-projective surface $V$ and a log Mori fiber space $S$ completing $V$, 
applying this algorithm to birational maps $f\colon S\dashrightarrow S$
corresponding to automorphisms of $V$ would lead to a good description
of the automorphism group of $V$. 
Unfortunately, this is not the case as it turns out in general that
the birational transformations involved in the algorithm do not preserve the inner quasi-projective surface
$V$. In this section we briefly review the mechanism of the log Sarkisov
program of Bruno and Matsuki and illustrate this phenomenon.

\subsection{Overview of the log Sarkisov program for projective surfaces}

\label{subsec:11} 

Let $f\colon S\dashrightarrow S'$ be a birational
map between 2-dimensional log Mori fiber spaces $(S,B_{S})$ and $(S',B_{S'})$.
We assume further that the latter are log MMP related, i.e. that they can be both obtained 
from a same pair $(X,B_{X})$ consisting of a smooth surface $X$ and a simple normal crossing divisor $B_{X}$
by running the log Minimal Model Program. We denote by $\pi \colon X\rightarrow S$ the corresponding morphism and by $C_{i}\subset X$ the irreducible components of its exceptional locus. 

The algorithm depends on two main discrete invariants of the birational map $f$ which are defined as follows. 
First, we choose an ample divisor $H'$ on $S'$. 
We denote by $H_{S}\subset S$ (resp. $H_{X}\subset X$, etc...) the strict transform of a general member
of the linear system $|H'|$. The \emph{degree} $\mu$ of $f$ is then defined as the positive
rational number $\frac{H_{S}.C}{-(K_{S}+B_{S}).C}$ where $C$ is
any curve contained in a fiber of the log Mori fiber structure on $S$. 
For the second invariant, the fact that $\pi$ is obtained by running the log MMP implies that in the ramification formulas
\[K_{X}+B_{X}=\pi^{*}(K_{S}+B_{S})+\sum a_{i}C_{i},\quad H_{X}=\pi^{*}H_{S}-\sum m_{i}C_{i}\]
we have $a_i>0$ for every $i$, which enables to define the \emph{maximal multiplicity} $\lambda$ of $f$ as the maximum of the positive rational numbers $\lambda_{i}=\frac{m_{i}}{a_i}$. 

If $\lambda>\mu$, then the algorithm  predicts the existence of
a maximal extraction, that is, an extremal divisorial contraction $Z\to S$ whose exceptional divisor realizes the maximal
multiplicity $\lambda$. Then either $Z$ is itself a log Mori fiber
space, or there exists another extremal divisorial contraction from 
$Z$ that brings us back to a log Mori fiber space. These operations
done, one shows that we have simplified $f$ in the sense that: either
$\mu$ went down; or $\mu$ remained constant but $\lambda$ went
down; or $\mu$ and $\lambda$ remained constant but the number of
exceptional divisors in $X$ realizing the maximal multiplicity $\lambda$
went down. 
Otherwise, if $\lambda\le\mu$, the algorithm predicts that either $S$
is equipped with a second structure of log Mori fiber space for which
the associated degree $\mu$ is strictly smaller, or there exists an
extremal divisorial contraction from $S$ to another log Mori fiber
space for which $\mu$ is again strictly smaller. 

The four types of elementary links occurring in the factorization procedure can be summarized
by the following diagrams:

\begin{figure}[ht]
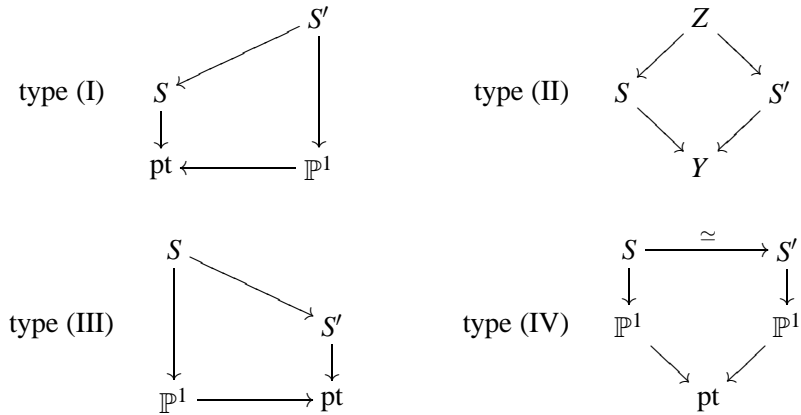

\begin{tabular}{ccc}
 type (I) $\quad \xygraph{
!{<0cm,0cm>;<0.7cm,0cm>:<0cm,1cm>::}
!{(0,0)}*+{S}="S" !{(3,1)}*+{S'}="S'" 
!{(0,-1)}*+{\pt}="pt" !{(3,-1)}*+{\cpo}="cpo"
"S"-@{->}"pt"  "S'"-@{->}"cpo"  
"cpo"-@{->}"pt" "S'"-@{->}"S"
}$ & $\qquad$ &
 type (II) $\quad\xygraph{
!{<0cm,0cm>;<0.7cm,0cm>:<0cm,1cm>::}
!{(0,0)}*+{S}="S" !{(3,0)}*+{S'}="S'" 
!{(1.5,1)}*+{Z}="Z" !{(1.5,-1)}*+{Y}="C" 
"S"-@{->}"C"  "S'"-@{->}"C"  
"Z"-@{->}"S" "Z"-@{->}"S'" 
}$\\
\\
type (III) $\quad\xygraph{
!{<0cm,0cm>;<0.7cm,0cm>:<0cm,1cm>::}
!{(0,1)}*+{S}="S" !{(3,0)}*+{S'}="S'"  
!{(0,-1)}*+{\cpo}="cpo" !{(3,-1)}*+{\pt}="pt"
"S"-@{->}"cpo"  "S'"-@{->}"pt"  
"cpo"-@{->}"pt" "S"-@{->}"S'"
}$ &&
type (IV) $\quad\xygraph{
!{<0cm,0cm>;<0.7cm,0cm>:<0cm,1cm>::}
!{(0,1)}*+{S}="S" !{(3,1)}*+{S'}="S'" 
!{(0,0)}*+{\cpo}="cpo" !{(3,0)}*+{\cpo}="cpo'"
!{(1.5,-1)}*+{\pt}="pt"
"S"-@{->}"cpo"  "S'"-@{->}"cpo'"  
"S"-@{->}^{\simeq}"S'" "cpo"-@{->}"pt" "cpo'"-@{->}"pt"
}$\\
\end{tabular} 
\caption{The four types of links of the log Sarkisov program.} \label{fig:LSP}
\end{figure}

The above program works for 2-dimensional dlt pairs $(S,B_{S})$. 
Bruno and Matsuki \cite{BM} also established the existence
of the analogue program in dimension $3$ for Kawamata log terminal
(klt) pairs $(Y,B_{Y})$ generalizing the original $3$-dimensional
version previously written down by Corti \cite{Cor}. For klt pairs
in any dimension, Hacon and McKernan \cite{HMc} recently gave a proof
of the existence of a factorization of birational maps between log
Mori fiber spaces into sequences of links of types (I), $\dots$,
(IV) (the definition of these links is slightly more
complicated in higher dimension because of the presence of isomorphisms
in codimension 1). However, their description, based on the results
in \cite{BCHM}, is much less effective and does not take the form of an explicit algorithm. 
In any case, we shall see in the next subsection that anyone of these factorization results
is in general inadequate to study the automorphism group of an open
surface $V$.

\subsection{Inadequacy of the log Sarkisov program}
The following criterion shows that for a large class of quasi-projective
surfaces $V$ admitting completions into log Mori fiber spaces $(S,B_{S})$,
any procedure which factors a birational map $S\dashrightarrow S$
into sequences of links of types (I), $\dots$, (IV) between log
Mori fiber spaces will affect in a nontrivial way the inner surface
$V$.

\begin{prop} \label{pro:notproper} Let $V$ be a quasi-projective
surface admitting a completion into a log Mori fiber space $\rho\colon S\rightarrow C$ over a smooth projective curve $C$. 
Suppose further that each irreducible component of the boundary $S\smallsetminus V$  has nonnegative self-intersection, and is not contained in a fiber of any log Mori fiber space structure on $S$. 
Then a strictly birational map $\phi\colon S\dashrightarrow S$,
cannot admit a factorization into a sequence of Sarkisov links
of type (I), $\dots$, (IV), each restricting to an isomorphism on $V$. \end{prop}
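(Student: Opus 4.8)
The plan is to argue by contradiction, tracking what each type of Sarkisov link does to the boundary divisor $B_S = S \smallsetminus V$ under the hypothesis that every link restricts to an isomorphism on $V$. The key observation is that if a link $S_{i-1} \dashrightarrow S_i$ restricts to an isomorphism on $V$, then it must induce a birational map that is an isomorphism outside the boundaries, so the birational transform of $B_{S_{i-1}}$ is exactly $B_{S_i}$ (as reduced divisors), and in particular the link can only blow up or contract curves supported on the boundary. So I would first record the general principle: \emph{in a factorization as in the statement, every blow-up and every contraction occurs on the boundary divisors.} This is exactly the ``proper'' property the introduction discusses failing in general; here the hypothesis forces it, and we will derive a contradiction from it.

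Next I would analyze the links one type at a time using the hypothesis that each boundary component has nonnegative self-intersection and is not contained in a fiber of any log Mori fiber space structure. The crucial numerical input is the following: in a type (II) link $Z \to S$, $Z \to S'$, the morphism $Z \to S$ is an extremal divisorial contraction whose exceptional curve $C$ must (by the previous paragraph) lie in the boundary, hence it is (the birational transform of) a component $E$ of $B_S$ with $E^2 \geq 0$; but the exceptional curve of a divisorial contraction on a smooth (or dlt) surface must have negative self-intersection, a contradiction. The same reasoning rules out the divisorial contractions appearing as the ``maximal extraction'' step in type (I) and type (III) links, and the second contraction in those links: any curve contracted to a point must have negative self-intersection, yet it must be a boundary component, which has $E^2 \geq 0$. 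I would be careful here that the self-intersection is computed on the relevant intermediate surface, which may differ from $S$ by earlier contractions; but since all earlier contractions were also on the boundary and contracted curves of negative self-intersection, and a boundary component of nonnegative self-intersection cannot have its self-intersection lowered to a negative value without itself being contracted (which again is impossible for the same reason), the self-intersections of the boundary components stay nonnegative throughout, and in fact the whole boundary is never touched.

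Finally I must handle type (IV) links, which contain no divisorial contraction — they are an isomorphism $S \xrightarrow{\sim} S'$ together with a change of the log Mori fiber space structure, say from $\rho \colon S \to C$ to a second structure $\rho' \colon S \to C'$. Here the hypothesis ``no boundary component is contained in a fiber of any log Mori fiber space structure on $S$'' enters decisively: since no link of type (I)--(III) can occur, a factorization could only consist of type (IV) links, which compose to an isomorphism $S \xrightarrow{\sim} S$; but $\phi$ was assumed \emph{strictly} birational, not biregular — contradiction. (If one allows a mix, the type (I)--(III) links are already excluded, so again only type (IV) links remain and the same contradiction applies.) I expect the main obstacle to be the bookkeeping in the middle paragraph: verifying carefully that on each intermediate surface $Z_i$ or $S_i$ the relevant contracted curve really is a boundary component and really still has nonnegative self-intersection, i.e. that nonnegativity of boundary self-intersections is preserved along the whole chain. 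Once that invariant is established, every link type with a genuine contraction is eliminated immediately, and the strict-birationality hypothesis kills the remaining case.
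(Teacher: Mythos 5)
Your treatment of type (III) links (a contraction of a boundary component is impossible because boundary components have nonnegative self-intersection) and of type (IV) links (they compose to a biregular isomorphism, contradicting strict birationality) matches the paper. But your argument against type (II) links contains a genuine error: you assert that the exceptional curve of the extraction $Z\to S$ ``must lie in the boundary, hence it is (the birational transform of) a component $E$ of $B_S$ with $E^2\geq 0$.'' This is not so. The extraction $Z\to S$ blows up a \emph{point} $q\in S$ (necessarily on the boundary if $V$ is to be preserved), and its exceptional curve is a \emph{newly created} divisor on $Z$ with negative self-intersection; it is not the transform of any component of $B_S$, and there is no contradiction at this stage. Indeed, type (II) links genuinely can restrict to isomorphisms on $V$ in general: take $S=\mathbb{P}^1\times\mathbb{P}^1$ with boundary a fiber $F$ of the ruling; blowing up a point of $F$ and contracting the strict transform of $F$ is a type (II) link which is an isomorphism off the boundary. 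This is exactly why the hypothesis that no boundary component is contained in a fiber is needed.

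The paper's argument for type (II) is the step you are missing: for a log Mori fiber space over a curve, the second morphism $Z\to S'$ of a type (II) link is forced to contract the strict transform of the unique fiber $F$ of $\rho'\colon S\to C$ passing through the blown-up point $q$. By the hypothesis, $F$ is not an irreducible component of $S\smallsetminus V$, so this contraction destroys $V$. Note that in your write-up the ``not contained in a fiber'' hypothesis is only invoked (and not actually used) in the type (IV) paragraph, whereas it is precisely the ingredient that kills type (II). Your dismissal of type (I) via the same ``contracted boundary component'' reasoning is likewise off target (an extraction contracts nothing on $S$); the correct reason type (I) never occurs is simply that it starts from a Mori fiber space over a point, and since type (III) links are excluded one never leaves the class of fibrations over a curve.
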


\begin{proof} Since $\rho\colon S\rightarrow C$ is a log Mori fiber
space over a curve, an elementary link starting from $S$ is necessarily
of type (II), (III) or (IV). Links of type (IV) only change the considered
log Mori fiber space structure on $S$ to another structure of the
same type. Since $\phi$ is strictly birational, it cannot be factored
into a sequence of links of type (IV). Therefore, after a sequence
of links of type (IV), one has necessarily to perform a link of type
(II) or (III) with respect to the log Mori fiber space structure $\rho'\colon S\rightarrow C$
at this step. Since by assumption the components of the boundary have
non-negative self-intersection hence cannot be contracted, we see that
a link of type (III) never restricts to an isomorphism on $V$. Consider
now the possibility of a link of type (II). After performing the extraction
$Z\rightarrow S$ with center at a point $q\in S$, the morphism $Z\to S'$
is the contraction of the strict transform of the unique fiber $F$
of the log Mori fiber space $\rho':S\rightarrow C$ passing though
$q$. Our hypothesis implies that $F$ is not an irreducible component
of the boundary $S\smallsetminus V$, and so, the link does not restrict
to an isomorphism on $V$. \end{proof}

\label{par:inadequacy}

\subsection{Example} As an illustration of Proposition \ref{pro:notproper}, let us consider
the case of the smooth affine surface $V$ defined as the complement of the diagonal $D$ in $\mathbb{P}^{1}\times\mathbb{P}^{1}$. 
The birational map 
\[
f\colon(x,y)\in\mathbb{A}^{2}\dashrightarrow\left(x+\frac{1}{x-y},y+\frac{1}{x-y}\right)\in\mathbb{A}^{2}
\]
preserves the levels $x-y=\mbox{\it constant}$, and extends via the embedding $(x,y)\in\mathbb{A}^{2}\hookrightarrow([x:1],[y:1])\in\mathbb{P}^{1}\times\mathbb{P}^{1}$ to a birational map from $S=\mathbb{P}^{1}\times\mathbb{P}^{1}$ to
$S'=\mathbb{P}^{1}\times\mathbb{P}^{1}$ inducing an isomorphism on
$V=\mathbb{P}^{1}\times\mathbb{P}^{1}\smallsetminus D$, where $D$ is the closure
of the diagonal $x-y=0$ in $\mathbb{A}^2$. The unique proper%
\footnote{ By \textit{proper} we mean a base point which is not an infinitely
near point.%
} base point of $f$ is the point $p=([1:0],[1:0])$, and the unique
contracted curve is the diagonal $D$. Straightforward calculations in local charts show that we
can resolve $f$ by performing 4 blow-ups that give rise to divisors $C_{1},\dots,C_{4}$ arranged
as on Figure \ref{fig:1}. We denote by $C_{0}$ the strict transform
of the diagonal $D$. Note that $C_{4}$ is the strict transform of
the diagonal in $S'$.

Choosing $H'=D$ as an ample divisor on $S'$, the coefficients $a_i$ in the ramification formulas
\[
K_{X}+B_{X}=\pi^{*}(K_{S}+B_{S})+\sum a_{i}C_{i},\;\mbox{ and }\; H_{X}=\pi^{*}H_{S}-\sum m_{i}C_{i},
\]
are easy to compute. For the $m_{i}$, one exploits for instance the fact that the strict transform
$H_{S}$ of a general member of $|D|$ is a rational curve of bidegree
$(3,3)$ with a double point at $p$ and at each of the infinitely near base points of $f$.
The results are tabulated in Figure \ref{fig:1}.

\begin{figure}[ht]
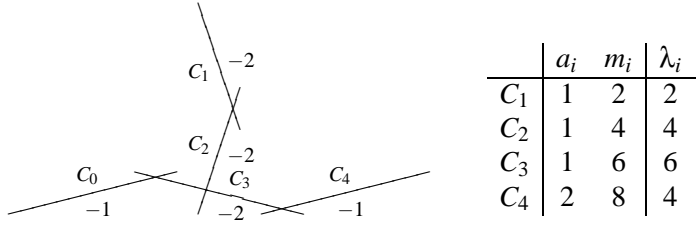

\[
\dessinresolutionbasic\qquad\begin{array}{c|cc|c}
 & a_{i} & m_{i} & \lambda_{i}\\
\hline C_{1} & 1 & 2 & 2\\
C_{2} & 1 & 4 & 4\\
C_{3} & 1 & 6 & 6\\
C_{4} & 2 & 8 & 4
\end{array}
\]
 \caption{Resolution of $f$ and coefficients in the ramification formulas.}

\label{fig:1} 
\end{figure}
The maximal multiplicity is thus realized by the divisor $C_{3}$ and a maximal
extraction $Z\rightarrow S$ is obtained by first blowing-up three times to produce $C_{1},C_{2}$
and $C_{3}$ and then contracting $C_{1}$ and $C_{2}$ creating a cyclic
quotient singularity. The boundary $Z\smallsetminus V$ consists of two irreducible curves $C_{0}$
and $C_{3}$, the latter supporting the unique singular point on the
surface. Furthermore, there exist 4 irreducible curves on $Z$ that correspond to $K+B$ extremal rays: 
\begin{itemize}
\item $C_{3}$, which is the exceptional divisor associated with the maximal
multiplicity; 
\item $C_{0}$, which is the strict transform of the diagonal on $S$; 
\item The strict transforms of the 2 rules $D_{+}$ and $D_{-}$ of $\mathbb{P}^{1}\times\mathbb{P}^{1}$
crossing at $p$. 
\end{itemize}
Now the log Sarkisov program imposes to contract one of the two curves
$D_{+}$ or $D_{-}$ above (precisely: the one that was a fiber for
the chosen structure of log Mori fiber space on $\mathbb{P}^{1}\times\mathbb{P}^{1}$)
to reach a new log Mori fiber space. But this birational contraction
does not restrict to an isomorphism on the affine surface $V$. 

However, the above computation shows that we are left with a third option which consists
in contracting the strict transform $C_{0}$ of $D$. This is precisely
the curve that our alternative algorithm will impose to contract to
get a new projective surface $S_{1}$ supporting a cyclic quotient singularity 
along the new boundary $B_{S_{1}}=C_{3}$. 
By construction, the corresponding birational map $S\dashrightarrow S_{1}$
induces an isomorphism on the inner affine surface $V$ but it turns out that $S_1$ is no longer a log Mori fiber space. 
Indeed, its divisor class group is isomorphic to $\mathbb{Z}^{2}$, generated by the strict transforms of $D_{+}$
and $D_{-}$. 
On the other hand, one checks that these curves generate the only $K+B$ extremal rays on $S_{1}$, each of these giving rise to a divisorial contraction $S_1\rightarrow \mathbb{P}^2$. 
Note in particular that even though it consists of a maximal extraction $Z\rightarrow S$ followed by a divisorial contraction $Z\rightarrow S_{1}$, the birational map $S\dashrightarrow S_{1}$ just constructed is not a Sarkisov link
of type (II). \\

Summing up, Proposition \ref{pro:notproper} and the above example
show that for quasi-projective surfaces $V$ admitting completions
into log Mori fiber spaces, there does not exist any factorization
process for which each elementary step is simultaneously a link of type (I), $\dots$, (IV) 
between log Mori fiber spaces and a birational map restricting to an isomorphism on $V$. 
So we cannot escape the dilemma that 
inevitably we have to abandon
one of these properties.

\section{The factorization algorithm}

\label{sec:1}

Here we first review basic facts on 2-dimensional dlt pairs and discuss the geometry of the boundaries of dlt completions involved in our main statement.
Then we prove  Main Theorem \ref{th:main} and discuss some additional properties of the factorization.

\subsection{Singularities and geometry of boundaries} 
The fact that an automorphism of a normal quasi-projective surface
$V$ extends to an automorphism of its minimal desingularisation enables to restrict without 
loss of generality to the case of a smooth quasi-projective surface. 
On the other hand since an extremal contraction starting from a smooth
log surface may yield a singular one, it is necessary to allow some kind
of singularities on the projective completions $S$ of $V$. Following recent work of Fujino \cite{Fuj10}, 
the widest framework where the log Mori Program is established in dimension $2$ is the one of pairs $(S,B_{S})$
with log canonical singularities. However, it is enough for our purpose
to work with the subclass of dlt pairs $(S,B_{S})$. 

\subsubsection{Hirzebruch-Jung singularities} \label{sec:sing} Before giving the characterization of these pairs that will be used in the sequel, let us first recall that an isolated
singular point $p$ of a surface $S$ is called a Hirzebruch-Jung cyclic quotient singularity of type
$A_{n,q}$, $n\geq 2$, $1\leq q \leq n-1$, $\gcd(n,q)=1$ if it is analytic locally isomorphic to the quotient of $\mathbb{A}^{2}$
by the action of the group $\mu_n\simeq \mathbb{Z}/n\mathbb{Z}$ of complex $n$-th roots of unity
defined by $(x,y)\mapsto (\varepsilon x,\varepsilon^q y)$. As it is well-known (see e.g. \cite[page 99]{BHPV}), the exceptional locus of the minimal resolution $\pi\colon\overline{S}\to S$ of $p$ consists of a chain of rational curves $E_{1},\dots,E_{s}$ with self-intersections $E_{i}^{2}=-a_{i}\leq -2$ determined by the expression 
\[
\frac{n}{q}=a_{1}-{\displaystyle \frac{1}{a_{2}-\frac{1}{a_{3}-\dots}}}
\]
as a continued fraction. Recall that cyclic quotient singularities are log terminal, i.e., in the ramification formula $K_{\overline{S}}=\pi^{*}K_{S}+\sum c_{i}E_{i}$ one has $-1<c_i$ for every $i$. 
For such singularities, one has in fact $-1<c_i\leq 0$. Indeed, otherwise, we can write $K_{\overline{S}}=\pi^{*}K_{S}+A-B$ where $A$ and $B$ are effective $\mathbb{Q}$-divisors supported on the exceptional locus of $\pi$ and without common components.
Since $A^2<0$, it follows that $K_{\overline{S}}\cdot A=(A-B)\cdot A<0$ and hence, there would exist an index $i$ such that $K_{\overline{S}}\cdot E_i<0$. 
But then $E_i$ would be a $(-1)$-curve which is absurd.

\subsubsection{Dlt pairs} For a definition of such pairs in general, we refer the reader to \cite[Definition 2.8]{KS}.
In our situation, \cite[Proposition 2.42]{KM} combined with the local
description of log terminal singularities of surfaces which can be
found in \cite[see in particular page 57, case (3)]{Kflips} leads to the 
following equivalent definition:

\begin{definition} A pair $(S,B_S)$ consisting of a projective surface $S$ and a nonempty reduced divisor $B_{S}=\sum B{}_{i}$ 
such that $S\smallsetminus B_{S}$ is smooth is called divisorially log terminal (dlt) if the following conditions are satisfied:
\begin{itemize}
\item The $B_{i}$ are smooth irreducible curves with normal crossings, that is each common point of two components is a normal crossing at a smooth point of $S$; 
\item A singular point $p$ of $S$ is a Hirzebruch-Jung
singularity $A_{n,q}$ and the strict transform of $B_S$ in the minimal resolution $\pi\colon\bar{S}\to S$ of $p$ meets the exceptional chain of rational curves $E_{1},\dots,E_{s}$ transversally at a unique point of the initial or final curve $E_{1}$ or $E_s$.
\end{itemize}

\noindent In particular, a dlt pair $(S,B_S)$ with irreducible boundary divisor $B_S$ is a purely log terminal (plt) pair.  
\end{definition}

Note that the above conditions guarantee in particular that the total transform of $B_S$ in the minimal resolution $\pi\colon\tilde{S}\rightarrow S$ of the singularities of $S$ is a simple normal crossing divisor.

\subsubsection{Geometry of the boundary} \label{sec:boundary} Let us first introduce notations and terminology that will be used in the sequel.
Given a strictly birational map of dlt completions $f\colon (S,B_S)\dashrightarrow (S',B_{S'})$ with irreducible boundaries, we denote by $\pi\colon\tilde{S}\rightarrow S$ and $\pi'\colon\tilde{S}'\rightarrow S'$ the minimal resolutions of the singularities of $S$ and $S'$ respectively. 
We denote by $\tilde{S}\stackrel{\sigma}{\leftarrow}X\stackrel{\sigma'}{\rightarrow}\tilde{S}'$ the minimal resolution of the base points of the birational map $\tilde{f}\colon\tilde{S}\dashrightarrow\tilde{S}'$ induced by $f$. 
Recall \cite[Theorem 1.3.7]{AC}
that $X$ and the birational morphisms $\sigma,\,\sigma'$ are uniquely
determined up to isomorphism by the following universal property:
given another resolution $\tilde{S}\leftarrow X'\rightarrow\tilde{S}'$,
there exists a unique birational morphism $X'\to X$ such that the
obvious diagram commutes. In particular, $X$ does not contain $(-1)$-curves that are exceptional for both $\pi \circ \sigma:X\to {S}$ and $\pi' \circ \sigma':X\to {S}'$.
This implies that if the sequence of blow-ups $\sigma':X\to\tilde{S}'$ is not empty, the $(-1)$-curve produced as the last exceptional divisor of the sequence is the strict transform of $B_S$.
Note also that by construction the boundary of $\tilde S$ and $X$ are simple normal crossing divisors, with each irreducible component a smooth rational curve.   

\noindent The following result shows that the existence of strictly birational maps of dlt completions  $f:(S,B_S)\dashrightarrow (S',B_{S'})$ imposes strong constraints on the boundaries:
\begin{prop} \label{prop:admissible} Let $f\colon (S,B_S)\dashrightarrow (S',B_{S'})$ 
be a strictly birational map of dlt completions with irreducible boundaries.
Then the following holds:
\begin{enumerate}
\item The boundaries $B_{S}$ and $B_{S'}$ are both isomorphic to $\mathbb{P}^{1}$; 
\item $S$ admits at most two singularities;
\item $f$ admits a unique proper base point $\bs(f)$, and if $S$ has exactly two singularities then $\bs(f)$ coincides with one of these singularities. 
\end{enumerate}
\end{prop}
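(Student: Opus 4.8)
The plan is to analyze the minimal resolution $\tilde S \stackrel{\sigma}{\leftarrow} X \stackrel{\sigma'}{\rightarrow} \tilde S'$ of the base points of $\tilde f$ described just above, and to extract the three assertions from a careful bookkeeping of what the two resolution morphisms $\pi\circ\sigma$ and $\pi'\circ\sigma'$ do to the boundary divisors. The essential point I would exploit is the universal property recalled in the excerpt: $X$ contains no $(-1)$-curve exceptional for \emph{both} $\pi\circ\sigma$ and $\pi'\circ\sigma'$, and the last curve blown up by $\sigma'$ (if $\sigma'$ is nontrivial) is the strict transform of $B_S$. Throughout, the boundary $B_X\subset X$ is a simple normal crossing divisor whose components are smooth rational curves; $B_X$ consists of the strict transform of $B_S$, the $\sigma$-exceptional curves resolving $B_{S'}$-preimages, the $\pi$-exceptional Hirzebruch--Jung chains over the singularities of $S$, and symmetrically from the $S'$-side.

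First I would prove (1). Since $f$ is strictly birational, $\sigma'$ is nonempty (if $\sigma'$ were trivial then $\tilde f$ would be a morphism $\tilde S\to\tilde S'$ contracting nothing, hence an isomorphism, forcing $f$ biregular). Let $E$ be the last exceptional $(-1)$-curve of $\sigma'$; as recalled, $E$ is the strict transform of $B_S$ in $X$, and $E$ is not contracted by $\pi\circ\sigma$, so $B_S$ is the image of a $\mathbb P^1$ and is therefore rational; irreducibility plus the dlt hypothesis (its strict transform in $\tilde S$ meets each Hirzebruch--Jung chain in one point of an end curve, transversally) shows $B_S$ is smooth, so $B_S\simeq\mathbb P^1$. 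By symmetry (running the same argument for $f^{-1}$, using that $\sigma$ is then nonempty for the same reason) $B_{S'}\simeq\mathbb P^1$.

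For (2) and (3) I would count the curves in $X$ that get contracted by $\pi\circ\sigma$ versus by $\pi'\circ\sigma'$, using that $\rho(X)$ is one more than whichever of the two sides one started from. Over $S$ the contracted curves are: the Hirzebruch--Jung chains resolving the singularities of $S$ (call their total number of components $s$), plus the $\sigma$-exceptional curves over the proper base points of $\tilde f$. Over $S'$ similarly. Since the two final surfaces $S,S'$ have the same Picard number pattern (boundary irreducible in each), comparing the two counts forces the number of proper base points on each side to be small; in particular the minimal resolution of a single Hirzebruch--Jung point contributes a connected chain, and an elementary Picard-number/Noether-formula count — together with the fact that a strictly birational $f$ of dlt completions with irreducible boundary removes exactly "one worth" of boundary on each side — pins down that $S$ has at most two Hirzebruch--Jung points and that $\tilde f$ has a unique proper base point as a map $\tilde S\dashrightarrow\tilde S'$; pushing this down to $S$ gives the unique proper base point $\bs(f)$, and if $S$ carries two singularities then $\bs(f)$ must sit at one of them because the first blow-up of $\sigma$ over $S$ has to "open" one of the two Hirzebruch--Jung chains (otherwise $X$ would acquire a boundary configuration that cannot be contracted down to an irreducible boundary on $S'$). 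I would make the last step precise by chasing the simple normal crossing boundary $B_X$ through $\sigma'$: the contractions $\sigma'$ must successively blow down $(-1)$-curves meeting the rest of $B_{X}$ in at most two points, and the combinatorics of collapsing $B_X$ to the single curve $B_{S'}$ (together with at most two Hirzebruch--Jung chains on the $S'$ side) leaves room for only one branch point, which is the image of $\bs(f)$.

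The main obstacle will be the bookkeeping in (2)–(3): making the "Picard-number versus number of boundary components and singularities" count airtight, and in particular ruling out configurations where $\tilde f$ would have two proper base points on $\tilde S$ whose exceptional curves could still be collapsed on the $S'$ side. I expect this to come down to the observation that each proper base point of $\tilde f$ eventually contributes, on the $S'$ side, either to the single curve $B_{S'}$ or to one of its (at most two) Hirzebruch--Jung chains, and a connectivity/normal-crossing argument shows a second independent proper base point would create a third "branch" in $B_X$ that no sequence of boundary blow-downs can eliminate while keeping the target boundary irreducible. Once that is established, assertions (1)–(3) all fall out of the same analysis.
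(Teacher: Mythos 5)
There is a genuine gap, located upstream of the part of your sketch that actually works. Your opening step for (1) --- ``since $f$ is strictly birational, $\sigma'$ is nonempty'' --- is false, and the justification is backwards: triviality of $\sigma'$ would make $\tilde f^{-1}$, not $\tilde f$, a morphism, and even that does not force $f$ to be biregular. Concretely, take $S$ with a single Hirzebruch--Jung point on $B_S$ resolved by a chain $E_1,\dots,E_s$ meeting the strict transform $E_0$ of $B_S$, with $E_0^2\leq-2$ on $\tilde S$; contracting $E_0,E_2,\dots,E_s$ instead of $E_1,\dots,E_s$ produces another dlt completion $(S',B_{S'})$ of the same $V$ with $B_{S'}$ the image of $E_1$, and the resulting strictly birational $f$ has $X=\tilde S=\tilde S'$ and $\sigma=\sigma'=\mathrm{id}$. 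In particular the strict transform of $B_S$ in $X$ is not ``the last exceptional $(-1)$-curve of $\sigma'$'' (here it has self-intersection $\leq-2$), so your route to the rationality of $B_S$ collapses in this case.

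The missing idea is the classical fact the paper leans on: for a birational map $h$ of normal surfaces, every proper base point $p$ of $h$ satisfies $p=h^{-1}(C)$ for some curve $C$ contracted by $h^{-1}$. Since $f^{-1}$ is an isomorphism off the boundaries and $B_{S'}$ is irreducible, $B_{S'}$ is the only curve $f^{-1}$ can contract; this immediately gives the uniqueness of $\mathcal{B}(f)$ in (3), and it forces $f(B_S)$ to be a point of $B_{S'}$, so that the strict transform of $B_S$ in $X$ is exceptional for $\pi'\circ\sigma'$ --- hence a smooth rational curve whether it arises from $\sigma'$ or from a Hirzebruch--Jung chain of $S'$ --- which yields (1). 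Your proposed substitute for this, the Picard-number/Noether count, is precisely the step you flag as the main obstacle, and I do not see how it rules out two proper base points without the contracted-curve lemma. By contrast, your final branch-counting argument is essentially the paper's proof of (2) and of the remaining half of (3): three or more points among the singularities of $S$ and $\mathcal{B}(f)$ would give the strict transform of $B_S$ at least three neighbours in the SNC tree $B_X$, and such a component can neither be the first curve contracted by $\sigma'$ (normal crossings would be destroyed) nor survive into $B_{\tilde S'}$, where every component other than the strict transform of $B_{S'}$ has at most two neighbours. That part is sound; the proof fails before it gets there.
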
 

\begin{proof}Recall (see e.g. \cite[Theorem 5.2 page 410]{Har})
that if $h:M\dashrightarrow M'$ is a birational map between normal
surfaces, and $p\in M$ is a proper base point of $h$, then there
exists a curve $C\subset M'$ such that $h^{-1}(C)=p$. In our situation, 
since $B_{S'}$ is the only curve that can be transformed to a point by $f^{-1}$, it follows that
$f$ has a unique proper base point $\bs(f)=f^{-1}(B_{S'})\in B_S$. This implies
in turn that $f(B_{S})$ cannot be equal to $B_{S'}$ and so must
be equal to a point $p'=\bs(f^{-1})\in B_{S'}$. In particular, with the notation above, the strict transforms on the minimal resolution $X$ of $B_{S}$ and $B_{S'}$ are smooth rational curves (they come either from the resolution of a $A_{n,q}$ singularity, or from the blow-up of a smooth point), and they are not equal. 
This gives (1).

Now suppose that the union of the singularities of $S$ and of $\bs(f)$
consists of at least three distinct points supported
on $B_{S}$. 
Then on $X$, the strict transform of $B_{S}$ is a boundary component with at least three neighbors.
If $\sigma'\neq\text{id}$ then the first contraction must be
the one of the strict transform of $B_S$, which is impossible since the boundary divisor is simply normal crossing for all surfaces between $X$ and $\tilde{S}'$. 
Hence $\sigma'=\text{id}$,
but again this gives a contradiction, since on $\tilde{S}'$ all divisors
except maybe the strict transform of $B_{S'}$ which is distinct from that of $B_S$ must have at most two neighbors.
This proves (2) and (3). \end{proof}

\subsection{Proof of the factorization Theorem \ref{th:main}}

\label{section:preuve} The proof relies on the following lemma which
characterizes the possible extremal rays supported on the boundaries
of dlt completions $(S,B_{S})$.

\begin{lem} \label{lem:KBneg} Let $(S,B_{S})$ be a dlt completion of a smooth quasi-projective surface $V$. 
\begin{enumerate}
\item A (smooth) rational curve $C\subset B_{S}$ with at least two neighboring
components in $B_{S}$ is not a $K_{S}+B_{S}$ extremal ray. 
\item If $C\subset B_{S}$ is a smooth rational curve with only one neighboring
component in $B_{S}$ and supporting at most one singularity of $S$,
then $(K_{S}+B_{S}).C<0$. 
\item Let $C\subset B_{S}$ be a curve supporting exactly one singularity
$p$ of $S$, and denote by $\overline{C}$ the strict transform of
$C$ in the minimal resolution of $p$. If ${\overline{C}}^{2}<0$
then $C^{2}<0$. 
\end{enumerate}
\end{lem}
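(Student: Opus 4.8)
The plan is to argue each of the three statements by passing to the minimal resolution $\pi\colon\tilde S\to S$, where $K_{\tilde S}+\tilde B$ (with $\tilde B$ the strict transform of $B_S$) can be related to $\pi^*(K_S+B_S)$ via the ramification formula $K_{\tilde S}+\tilde B = \pi^*(K_S+B_S)+\sum c_i E_i$, and to combine the adjunction formula on $\tilde S$ with the sign information on the discrepancies $c_i$ coming from Hirzebruch-Jung singularities. Recall from \S\ref{sec:sing} that for the minimal resolution of an $A_{n,q}$ singularity all discrepancies satisfy $-1<c_i\le 0$; this is the crucial input, together with the fact that the exceptional locus is a chain of $(-a_i)$-curves with $a_i\ge 2$.

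For (1), I would observe that if $C\subset B_S$ has at least two neighbors in $B_S$, then on any resolution the total transform of $B_S$ still has $\tilde C$ (the strict transform of $C$) meeting at least two other boundary components, and by dlt-ness $C$ carries no singularity of $S$ in its interior, so $C\cong\tilde C$ and $(K_S+B_S)\cdot C = (K_{\tilde S}+B_{\tilde S})\cdot \tilde C$ where $B_{\tilde S}$ is the total transform; adjunction on $\tilde S$ gives $(K_{\tilde S}+\tilde B)\cdot \tilde C = -2 + (\tilde B - \tilde C)\cdot\tilde C \ge -2 + 2 = 0$ since $\tilde C$ meets at least two other components of $\tilde B$, each contributing at least $1$. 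A curve with $(K_S+B_S)\cdot C\ge 0$ cannot be a $K_S+B_S$ extremal ray. For (2), the curve $C$ has exactly one neighbor in $B_S$ and supports at most one singularity $p$. On the minimal resolution of $p$ (the only relevant part), $\overline C$ meets the exceptional chain $E_1,\dots,E_s$ transversally at one point of $E_1$ or $E_s$, and it still meets exactly one other original boundary component. Then $(K_S+B_S)\cdot C = (K_{\tilde S}+\tilde B + \text{(correction)})\cdot\overline C$; expanding via the ramification formula, $(K_S+B_S)\cdot C = (K_{\tilde S}+B_{\tilde S}-\sum c_i E_i)\cdot\overline C$ where $B_{\tilde S}$ now includes the full exceptional chain as part of the total transform, so $(K_{\tilde S}+B_{\tilde S})\cdot\overline C = -2 + 1 + 1 = 0$ (one neighbor from $B_S$, one from the chain), while $-\sum c_i E_i\cdot\overline C = -c_1$ or $-c_s > 0$ is strictly negative after the sign flip — wait, one must be careful: the term is $-(\sum c_i E_i)\cdot\overline C = -c_j$ for the index $j$ where the chain meets $\overline C$, and since $c_j\le 0$ this is $\ge 0$; the strict inequality comes from noting $c_j$ cannot be $0$ unless $p$ is not actually a singularity adjacent to $\overline C$. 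I would pin down the strict sign by an explicit computation of the continued-fraction discrepancy at the end of the chain, or alternatively by the more robust argument that $K_S+B_S$ restricted to the plt "log canonical center" structure near $C$ gives a klt pair on $C$ of degree $<0$ — but the cleanest route is the direct ramification-formula computation, and the strictness is where I expect to spend the most care.

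For (3), which is purely local, I would argue by contradiction exactly as in the Hirzebruch-Jung discussion in \S\ref{sec:sing}: suppose $\overline C^2<0$ but $C^2\ge 0$. Write $\pi^*C = \overline C + \sum d_i E_i$ with $d_i>0$ rational (since $C$ passes through $p$ and meets the chain at one end). Then $0\le C^2 = \pi^*C\cdot\pi^*C = \overline C\cdot\pi^*C = \overline C^2 + \sum d_i(\overline C\cdot E_i) = \overline C^2 + d_j$ where $j$ is the end of the chain meeting $\overline C$. This does not immediately give a contradiction, so instead I would intersect with the other divisors: from $\pi^*C\cdot E_i = 0$ for all $i$ one gets a linear system for the $d_i$, and combined with $\overline C^2<0$ one derives $C^2<0$ directly. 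Concretely, intersecting $\pi^*C$ with $E_i$ yields $(\overline C\cdot E_i) + \sum_k d_k (E_k\cdot E_i)=0$, i.e. the vector $(d_i)$ equals $-N^{-1}v$ where $N$ is the (negative definite) intersection matrix of the chain and $v$ the vector recording which end $\overline C$ meets; then $C^2 = \overline C^2 - v^T N^{-1} v$ and since $N$ is negative definite, $-v^T N^{-1}v>0$, so $C^2 = \overline C^2 + (\text{positive})$ — so this shows $C^2$ could be non-negative even if $\overline C^2<0$! I must therefore be using the specific structure: $C$ is a \emph{boundary} component of a dlt pair, so $\overline C$ meets the chain at the \emph{very end} $E_1$ (or $E_s$), and one computes $v^T N^{-1}v = q/n$ (or $q'/n$ with $qq'\equiv 1$), which is $<1$; hence $C^2 = \overline C^2 + q/n$ with $\overline C^2\le -1$ an integer and $0<q/n<1$, forcing $C^2<0$. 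So the main obstacle in (3) is correctly identifying that the relevant quantity $v^TN^{-1}v$ is the fractional number $q/n\in(0,1)$, which follows from the continued-fraction description, and in (2) it is establishing the strict inequality rather than just $\le 0$.
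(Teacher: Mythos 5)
Your part (2) has a genuine sign problem. You compute $(K_S+B_S)\cdot C=(K_{\tilde S}+B_{\tilde S})\cdot\overline C-\sum c_iE_i\cdot\overline C$ with $B_{\tilde S}$ the \emph{reduced total} transform, get $(K_{\tilde S}+B_{\tilde S})\cdot\overline C=0$, and then assert $c_j\le 0$ so that the correction is $\ge 0$ --- which would prove $(K_S+B_S)\cdot C\ge 0$, the opposite of the claim. The confusion is between the discrepancies of $K$ alone (which for a Hirzebruch--Jung point satisfy $-1<d_i\le 0$, as recalled in \S\ref{sec:sing}) and the coefficients in the log ramification formula with reduced exceptional boundary, which are $c_i=1+d_i-e_i$ where $e_i>0$ is the multiplicity of $E_i$ in $\pi^*C$; the dlt condition says precisely $d_i-e_i>-1$, i.e.\ $c_i>0$, and then $(K_S+B_S)\cdot C=-c_j<0$ as required. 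Note that $c_j$ being controlled by the plt condition, not by the continued fraction, is essential: for a du Val $A_n$ point one has $d_j=0$, so your proposed ``strictness because $c_j\ne 0$ at a genuine singularity'' fails under your reading of $c_j$. The paper avoids all of this by a one-line adjunction on $S$ itself: $(K_S+B_S)\cdot C=-2+\sum_{i}(1-\tfrac1{m_i})+n$ via the different $\mathrm{Diff}_C(0)$, which gives (1) and (2) simultaneously (and also shows that your side-claim in (1) that dlt-ness forbids singularities of $S$ in the interior of $C$ is false, though harmless there since extra singular points only increase the intersection number).

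Your part (3) is correct and takes a genuinely different route from the paper for the key bound: you identify $C^2-\overline C{}^2=-v^TN^{-1}v=e_j$ and bound $e_j<1$ by the explicit continued-fraction value $q/n$ at the end of the chain, whereas the paper deduces $e_j<1$ directly from the dlt inequality $d_j-e_j>-1$ together with $d_j\le 0$ --- no computation with $N^{-1}$ needed. The paper's argument is shorter and is exactly the ingredient you are missing in (2), so repairing (2) and streamlining (3) amount to the same observation.
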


\begin{proof} Let $n$ be the number of neighbors of $C$ in $B_{S}$
and let $p_{1},\ldots,p_{r}$ the singular points of $S$ supported
along $C$. By the adjunction formula (see e.g. 2.2.4 in \cite{Pro}),
we have 
\[
(K_{S}+B_{S})\cdot C=(K_{S}+C)\cdot C+n=\deg(K_{C}+\text{Diff}_{C}(0))+n=-2+\sum_{i=1}^{r}(1-\frac{1}{m_{i}})+n,
\]
 where $m_{i}\geq2$ is the index of the singular point $p_{i}$,
$i=1,\ldots,r$. This implies (1) and (2). For (3), let $\pi:\overline{S}\rightarrow S$ be a minimal resolution
of $p$ and let $E=E_1$ be the unique $\pi$-exceptional curve that intersects
the strict transform $\overline{C}$ of $C$. 
We write $\overline{C}=\pi^{*}C-bE-\reste$, $K_{\overline{S}}=\pi^*(K_S)+cE+\reste'$
where $b>0$, $0 \ge c > -1$ (see  \S\ref{sec:sing}) and where $\reste,\reste'$
are $\pi$-exceptional divisors whose supports do not meet $\overline{C}$.
The fact that $(S,B_{S})$ is a dlt pair implies that 
$c-b>-1$ whence that $1>b$. The assertion follows since $C^{2}=\pi^{*}C\cdot\overline{C}=(\overline{C}+bE+\reste)\cdot\overline{C}={\overline{C}}^{2}+b<{\overline{C}}^{2}+1$.
\end{proof}

\begin{proof}[Proof of Theorem \ref{th:main}] Recall that we
have a strictly birational map $f\colon (S,B_S)\dashrightarrow (S',B_{S'})$ 
restricting to an isomorphism $V=S\smallsetminus B_{S}\simeq V'=S'\smallsetminus B_{S'}$.
As in subsection \ref{sec:boundary}, we let $\pi\colon\tilde{S}\rightarrow S$
and $\pi'\colon\tilde{S}'\rightarrow S'$ be the minimal resolutions
of singularities and we let $\tilde{S}\stackrel{\sigma}{\leftarrow}X\stackrel{\sigma'}{\rightarrow}\tilde{S}'$
be the minimal resolution of the base points of the induced birational map $\tilde{f}$. 
By Proposition \ref{prop:admissible}(1) and the description of Hirzebruch-Jung singularities given in \S\ref{sec:sing}, the divisor $B_{X}$ is then a tree of rational curves.
The irreducible components of $B_{X}$
are exceptional for at least one of the two morphisms $\pi\circ\sigma$
or $\pi'\circ\sigma'$, thus they all have a strictly negative self-intersection.
Since $B_{X}$ is a tree, there exists a unique sub-chain $E_{0},E_{1},\ldots,E_{n}={E_0}'$
of $B_{X}$ joining the strict transforms $E_{0}$ and ${E_0}'$ of $B_{S}$ and $B_{S'}$ respectively. We proceed by induction on the number $n+1$ of components
in this chain. The integer $n\geq 1$  will also be the number of links needed to factorize $f$. 
We use the same notation for the curves $E_{i}$, $i=0,\ldots,n$ and
their images or strict transforms in the different surfaces that will
come into play. 

To construct the first link $S=S_0\dashrightarrow S_1$, we consider the minimal partial resolution $\tilde{S} \leftarrow Y\dashrightarrow \tilde{S}'$ of $\tilde{f}$ dominated by $X$ and containing the divisor $E_1$ defined as follows:

\indent - If $\tilde{f}:\tilde{S}\dashrightarrow\tilde{S}'$
is either a morphism or has a proper base point supported outside 
from $E_{0}$, then $E_{1}$ is one of the exceptional divisor of
$\pi$, and the boundary $B_{\tilde{S}}$ is a chain of rational curves
with $E_{0},E_{1}$ intersecting in one point. In this
case we put $Y=\tilde{S}$.

\indent - Otherwise, if $\tilde{f}:\tilde{S}\dashrightarrow\tilde{S}'$
has a proper base point on $E_0$ then by definition of the resolution $X$,
the divisor $E_{1}$ is produced by blowing-up successively the base
points of $\tilde{f}$ as long as they lie on $E_{0}$, $E_{1}$ being the
last divisor produced by this process. We let $Y\rightarrow\tilde{S}$ 
be the intermediate surface thus obtained. By construction, the image
of the curves contracted by the induced birational morphism $X\to Y$
are all located outside $E_{0}$ and the self-intersections of $E_{0}$
in $X$ and $Y$ are equal. The divisor $B_{Y}$ is a chain that looks as in Figure \ref{fig:proof}. 
\begin{figure}[ht]
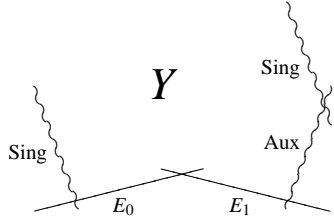

 $$\dessinpreuve$$
 \caption{The boundary divisor of $Y$.} \label{fig:proof}
\end{figure}
The wavy curves labeled ``Sing'' correspond to the (possible) chains
of rational curves obtained by desingularisation of $S$, and the
wavy curve labeled ``Aux'' corresponds to the (possible) chain
of auxiliary rational curves, each with self-intersection $-2$,
obtained by resolving the base points of $\tilde f$ before getting $E_{1}$.

In both cases,  we have $E_{0}^{2}<0$ on $Y$,
since this self-intersection is the same as the one on $X$.
So all irreducible components of $B_Y$ have a strictly negative self-intersection.   By running the $K+B$ MMP on $Y$ we can successively
contract all the components of the boundary $B_{Y}$ except $E_{0}$ and $E_{1}$. 
Indeed at each step $Y$ is a minimal resolution of the intermediate surface, and each extremity component
$C$ of the boundary chain supports at most one singularity: Lemma
\ref{lem:KBneg} ensures that $C$ is $K+B$ negative and has negative
self-intersection whence generates a $K+B$ extremal
ray giving rise to a divisorial extremal contraction. We note
$(Z,E_{0}+E_{1})$ the dlt pair obtained from the pair $(Y,B_{Y})$
by this sequence of contractions. 
\[
\mygraph{!{<0cm,0cm>;<1cm,0cm>:<0cm,1cm>::}!{(2,3.3)}*+{X}="X"!{(2,2.1)}*+{Y}="Y"!{(2,0.6)}*+{Z}="Z"!{(0,0)}*+{S}="S"!{(4,0)}*+{S'}="S'"!{(0,1.5)}*+{\tilde{S}}="Stilda"!{(4,1.5)}*+{\tilde{S'}}="S'tilda""X"-@{->}_{\sigma}"Stilda""X"-@{->}^{\sigma'}"S'tilda""X"-@{->}"Y""Y"-@{->}"Stilda""Y"-@{->}"Z""Z"-@{->}"S""Stilda"-@{->}_{\pi}"S""S'tilda"-@{->}^{\pi'}"S'""S"-@{-->}_{f}"S'"}
\]
By construction, $Z$ dominates $S$ via the divisorial contraction
of the $K+B$ extremal curve $E_{1}$. Again by Lemma \ref{lem:KBneg},
$E_{0}$ generates a $K+B$ extremal ray in $Z$, and $E_{0}^{2}<0$
on $Z$. So there exists a $K+B$ divisorial extremal contraction
$Z\rightarrow S_{1}$ contracting $E_{0}$ and yielding a new dlt
pair $(S_{1},B_{S_{1}})$ with reduced boundary $B_{S_{1}}$ consisting
of the strict transform of $E_{1}$. We obtain the first expected link and the
map $f\colon S\dashrightarrow S'$ factorizes via a birational map
$f_{1}\colon S_{1}\dashrightarrow S'$. 
\[\xymatrix{ & Z = Z_1 \ar[dl] \ar[dr] \\ S = S_0 \ar@/{}_{1pc}/@{-->}[rrrr]_{f} \ar@{-->}[rr] & &  S_1 \ar@{-->}[rr]^{f_1} & & S'}\]

Furthermore, the minimal resolution $X_1$ of the induced
birational map $\tilde{f}_1:\tilde{S}_{1}\dashrightarrow\tilde{S}'$
between the minimal desingularisations of $\tilde{S}_{1}$ and $\tilde{S}'$
induced by $f_{1}$ is dominated by $X$. More precisely, since $E_0$ is the only possible $(-1)$-curve on $X$
which is exceptional for both induced morphism $X\rightarrow \tilde {S_1}$ and $X \to \tilde{S'}$, $X_1$ is either equal to $X$ if $E_0^2\leq -2$
or is obtained from $X$ by first contracting $E_0$ and then all successive $(-1)$-curves occurring in the minimal resolution of a singular point of $S$ supported on $B_S$ and distinct from the proper base point of $f$ (see Figure \ref{fig:proof} above). 
It follows that the chain associated to $f_{1}\colon S_{1}\dashrightarrow S'$ as defined at the beginning of the proof consists of the curves $E_{1},\ldots,E_{n}={E_0}'$ hence has length $n$. 
We conclude by induction that we can factorize $f$ into exactly $n$ links. \end{proof}

\begin{rem} A by-product of the proof above is the following property
of the intermediate surfaces $Z_{i}$ with two boundary components
that appear in the Theorem:  each one of the boundary component supports
at most one singularity. Note also that neither Lemma \ref{lem:KBneg}
nor the above proof tell something about the possible $K_{Z_{i}}+B_{Z_{i}}$
extremal curves on these intermediate surfaces that do not belong
to the boundary: in the example given in \S \ref{par:inadequacy}
above, we have four $K+B$ extremal rays but only two of them were
supported on the boundary.
\end{rem} 

We introduce a concept that will prove useful in the next section.

\begin{definition}
If $f\colon (S,B_S)\dashrightarrow (S'',B_{S''})$ and $g\colon(S'',B_{S''})\dashrightarrow (S',B_{S'})$ are strictly birational maps of dlt completions, we will say that $f$ and $g$ are in \emph{special position} if $\bs(f^{-1})=\bs(g)$ and in \emph{general position} otherwise.
\end{definition}

It follows in particular from the construction of the factorization $f=f_n\dots f_1 \colon(S,B_S)\dashrightarrow (S',B_{S'})$ given in the proof above that for every $i=1,\dots,n-1$, $f_i$ and $f_{i+1}$ are in general position. 
In general, see Remark \ref{rem:expleref} below, the factorization into elementary links of a composition of two strictly birational maps of dlt completions with irreducible boundaries does not coincide with the concatenation of the factorizations of these maps. 
The following corollary provides however a sufficient condition for this property to hold.
In particular the condition is satisfied when all the surfaces into play are smooth.

\begin{cor} \label{cor:concat}
Let $f:(S,B_{S})\dashrightarrow(S'',B_{S''})$ and $g:(S'',B_{S''})\dashrightarrow(S',B_{S'})$
be birational maps of dlt completions with irreducible boundaries.
If $f$ and $g$ are in general position and at least one of the two
points $\bs(g)$ or $\bs(f^{-1})$ is a smooth point
of $S''$ then the factorization of $g\circ f$ into elementary links given
by Theorem \ref{th:main} is equal to the concatenation of the factorizations of
$f$ and $g$. Furthermore, one has then $\bs(g\circ f)=\bs(f)$ and $\bs((g\circ f)^{-1})=\bs(g^{-1})$.
\end{cor}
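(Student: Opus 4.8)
The plan is to run the factorization algorithm of Theorem \ref{th:main} on the composed map $g\circ f$ and compare it step by step with the concatenation of the two individual factorizations, using the hypothesis to control how the minimal resolution $X$ of $g\circ f$ relates to those of $f$ and $g$. First I would set up notation: let $\pi'':\tilde S''\to S''$ be the minimal resolution of $S''$, let $X_f$ (resp. $X_g$) be the minimal resolution of the base points of $\tilde f$ (resp. $\tilde g$), and let $X$ be the minimal resolution of the base points of the composite $\widetilde{g\circ f}$. The key geometric input is that, since $f$ and $g$ are in general position, the proper base point $\bs(g)$ and the point $\bs(f^{-1})=f(B_S)$ are distinct points of $S''$; and since at least one of them is a smooth point of $S''$, Proposition \ref{prop:admissible}(2)--(3) forces $S''$ to have at most one singularity, which must then coincide with the other of these two points (if it is singular). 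So on $\tilde S''$ the two distinguished curves — the strict transform ${E_0}''$ of $B_{S''}$ and the curve $E_{n'}'$ over which $f$ contracts $B_S$ (respectively the curve ${E_0}'$ that $g$ blows up first) — are disjoint, each meeting the rest of the boundary at a single (distinct) point.

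The crucial claim is then that $X$ dominates $X_f$ and $X_g$ and that the chain of boundary curves joining $E_0$ (strict transform of $B_S$) to ${E_0}'$ (strict transform of $B_{S'}$) in $B_X$ is exactly the concatenation of the chain for $f$ (from $E_0$ to the strict transform of $B_{S''}$) with the chain for $g$ (from the strict transform of $B_{S''}$ to ${E_0}'$). This is where the smoothness hypothesis does the work: the resolution $X$ of $g\circ f$ is obtained by taking $X_f$, then blowing up the base points of $g$ read on $X_f$; because $\bs(f^{-1})\neq\bs(g)$, the blow-ups coming from $g$ do not interfere with the chain already built for $f$ near $E_0$, and — since the strict transform of $B_{S''}$ on $X_f$ is a $(-1)$-curve precisely when $B_{S''}^2\le\ldots$, more precisely it is the last curve produced by $\sigma'$ in the $f$-resolution — the only possible common $(-1)$-curve of the two morphisms $X\to S$ and $X\to S'$ is $E_0$, so by the universal property (\cite[Theorem 1.3.7]{AC}) recalled in \S\ref{sec:boundary}, $X$ is indeed minimal and is obtained from the naive gluing $X_f\cup_{\tilde S''} X_g$. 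One has to check the contraction of the intermediate copy of $B_{S''}$: on the $X_f$ side its strict transform is the $(-1)$-curve produced last by the $f$-resolution, while on the $X_g$ side it is the curve $g$ blows up first — and because $\bs(g)$ is smooth or the unique singularity lies elsewhere, these are compatible, so no further contraction is forced and the total chain length is $n_f+n_g$.

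Once this is established, the conclusion is essentially bookkeeping: the first $n_f$ links produced by Theorem \ref{th:main} applied to $g\circ f$ depend only on the initial segment of the chain $E_0,E_1,\dots$ together with the singularities of $S$ and the proper base point of the map, all of which coincide with the data of $f$ — hence these links reproduce the factorization of $f$ — and after performing them we are left with a map $S''\dashrightarrow S'$ whose associated chain is the $g$-chain and whose proper base point is $\bs(g)$, so the remaining $n_g$ links reproduce the factorization of $g$. The identities $\bs(g\circ f)=\bs(f)$ and $\bs((g\circ f)^{-1})=\bs(g^{-1})$ then fall out: the unique proper base point of $g\circ f$ is $f^{-1}$ of the unique contracted curve $B_{S'}$ of $(g\circ f)^{-1}$, and since $f$ and $g$ are in general position this pullback is computed entirely inside $f$, giving $\bs(g\circ f)=\bs(f)$; symmetrically for the inverse. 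The main obstacle is the middle paragraph: carefully justifying that $X=X_f\cup_{\tilde S''}X_g$ is the \emph{minimal} resolution of $g\circ f$ and that no $(-1)$-curve other than $E_0$ gets contracted when passing between the two pieces — this is exactly the point where the general-position and smoothness hypotheses are indispensable, and where, as Remark \ref{rem:expleref} warns, the statement genuinely fails without them.
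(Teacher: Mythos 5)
Your plan follows the paper's proof almost exactly: build the resolution $X$ of $g\circ f$ by gluing the minimal resolutions $X_f$ and $X_g$ over $\tilde S''$, argue that no $(-1)$-curve common to both projections survives so that $X$ is minimal and the two boundary chains concatenate, and then read off the base-point identities from the general-position hypothesis. Two points in your middle paragraph need repair, though. First, you name $E_0$ (the strict transform of $B_S$) as ``the only possible common $(-1)$-curve of $X\to S$ and $X\to S'$''; that curve is not exceptional for $X\to \tilde S$ at all, so it is not a candidate. The curve that matters is $E_0''$, the strict transform of $B_{S''}$, which is exceptional for both morphisms (it is produced when resolving $f$ and contracted when descending to $S'$) --- your next sentence about ``the intermediate copy of $B_{S''}$'' shows you know this, but the two sentences contradict each other. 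Second, the assertion that ``these are compatible, so no further contraction is forced'' is exactly the step the smoothness hypothesis must justify, and you leave it unargued. The paper's argument is: after reducing (by replacing $f,g$ with their inverses) to the case where $\bs(g)$ is a smooth point of $S''$, the strict transform $E_0''$ already has negative self-intersection on $X_f$ because $f^{-1}$ contracts $B_{S''}$, and the lift of $g$ to $X_f$ then has a proper base point \emph{on} $E_0''$ (this is where smoothness of $\bs(g)$ enters), so one more blow-up occurs on $E_0''$ and its strict transform in $X$ has self-intersection $\leq -2$; hence $E_0''$ is not a $(-1)$-curve on $X$ and $X$ is minimal. Supplying that computation closes the only real gap in your proposal.
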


\begin{proof}
Up to replacing $f$ and $g$ by their inverses, we may assume that
$\mathcal{B}(g)$ is a smooth point of $S''$.
As before we denote by $\tilde S$ the minimal desingularisation of $S$ (same with $S'$, $S''$) and by $\tilde f$, $\tilde g$ the induced birational maps.
The hypothesis implies that all the base points of $\tilde f^{-1}$ and $ \tilde g$ including infinitely near ones are distinct so that a resolution $\tilde S\stackrel{\sigma}{\leftarrow}X\stackrel{\sigma'}{\rightarrow} \tilde S'$ of the birational map $\tilde S \dashrightarrow \tilde S'$ induced by $g\circ f$ is obtained from $\tilde S''$ by simultaneously resolving the base points of $\tilde f^{-1}$ and $\tilde g$ : 
$$\mygraph{!{<0cm,0cm>;<1cm,0cm>:<0cm,1cm>::}
!{(2,3.3)}*+{X}="X" !{(2,1.5)}*+{\tilde{S}''}="S''tilda"
!{(2,0)}*+{S''}="S''" !{(0,0)}*+{S}="S"!{(4,0)}*+{S'}="S'"
!{(0,1.5)}*+{\tilde{S}}="Stilda" !{(4,1.5)}*+{\tilde{S}'}="S'tilda""X"-@{->}_{\sigma}"Stilda"
"X"-@{->}^{\sigma'}"S'tilda" "X"-@{->}"S''tilda" "S''tilda"-@{->}"S''" "Stilda"-@{->}_{\pi}"S" "S'tilda"-@{->}^{\pi'}"S'" "S"-@{-->}_{f}"S''" "S''"-@{-->}_{g}"S'" "Stilda"-@{-->}_{\tilde f}"S''tilda" "S''tilda"-@{-->}_{\tilde g}"S'tilda"
}$$
The surface $X$ dominates the minimal resolution $X_{f}$ of $\tilde f$ and $X_{g}$ of $\tilde g$.
We denote by $E_0$, $E'_0$, $E''_0$ the strict transforms of $B_S$, $B'_S$ and $B''_S$ in $X$ (or in $X_f$, $X_g$). 
By construction the chain joining $E_{0}$ to $E_{0}'$ in $X$ is the union of the strict transform of the chain joining $E_{0}$ to $E_{0}''$ in $X_{f}$ with the strict transform of the chain joining $E_{0}''$ to $E_{0}'$ in $X_{g}$. 
Since $B_{S''}$ is contracted by $f^{-1}$, its strict transform $E_{0}''$ in $X_{f}$ has negative self-intersection. Furthermore since $\mathcal{B}(g)$ is a smooth point of $S''$, the lift of $g$ to $X_{f}$ has a proper base point on $E_{0}''$ and so the strict transform of $E_{0}''$ in $X$ has self-intersection $\leq-2$. 
Since $E_0''$ is the only curve that could have been a $\left(-1\right)$-curve simultaneously exceptional for $\sigma$ and $\sigma'$, we conclude that $X$ is a minimal resolution of $g\circ f$. 

Now the first part of the assertion follows directly from the construction of the factorization.
The second part follows from the fact that since the image $\bs(g)\in B_{S''}$ of $B_{S'}$ by $g^{-1}$ is distinct from $\bs(f^{-1})$, the image $\bs(g\circ f)$ of $B_{S'}$ by $(g\circ f)^{-1}$ coincides with the image $\bs(f)\in B_S$ of $B_{S''}$ 
by $f^{-1}$. For the same reason, $\bs((g \circ f)^{-1})=\bs(g^{-1})$.
\end{proof}

\begin{rem} \label{rem:expleref}
The assumption that $\mathcal{B}(f^{-1})$ or $\mathcal{B}(g)$ is
a smooth point of $S''$ implies in particular that $B_{S''}$ supports
at most a singular point of $S''$ (Proposition \ref{prop:admissible}, assertion (3)).
So the only situation in which the conclusion of the Corollary above
could fail is when $B_{S''}$ supports exactly two singular points
which are the proper base points of $\mathcal{B}(f^{-1})$ and $\mathcal{B}(g)$ respectively.
The following example, which was pointed out to us by the referee, shows that this phenomenon can indeed occur.

Consider $S = \cpd$, with boundary $B_S$ equal to a line.
We construct a surface $X$ by blowing-up three points: first a point on $S$ producing an exceptional divisor $E$; then the intersection point $E \cap E_0$ (where $E_0$ is the strict transform of $B_S$) producing the exceptional divisor $E'_0$; and finally blowing-up $E_0 \cap E'_0$ producing $E_0''$.

We construct a surface $S''$ from $X$ by contracting the curves $E, E_0$ and $E'_0$; similarly we construct $S'$ by contracting $E, E_0$ and $E_0''$.
These surfaces are singular, we have $\tilde S'' = X$, and $\tilde S'$ is the surface obtained from $X$ by contracting $E_0$ and $E_0''$.
Denote by $f, g$ the birational maps $S \dashrightarrow S''$ and $S'' \dashrightarrow S'$ (see Figure \ref{fig:expleref}). Then the factorization of $g \circ f$ is not the concatenation of the factorizations of $f$ and $g$. 
What's going wrong here is that $X$ is not a minimal resolution of $g\circ f$, indeed $E_0''$ is a $(-1)$-curve on $X$  which is exceptional for both $\sigma$ and $\sigma'$. 
\end{rem}

\begin{figure}[ht]
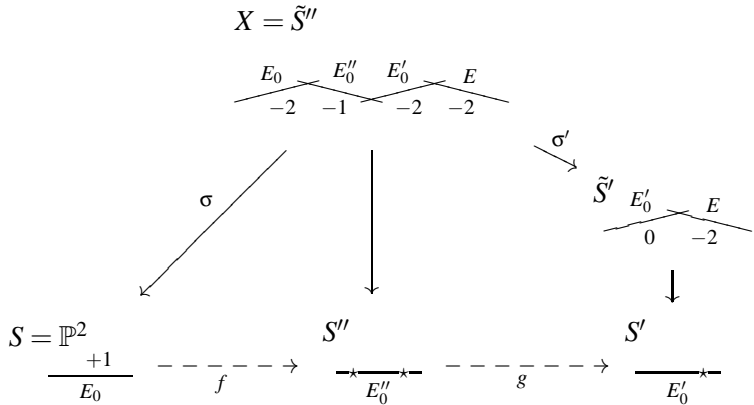

$$\mygraph{
!{<0cm,0cm>;<1cm,0cm>:<0cm,1cm>::}
!{(0,0)  }*+++{\ExpleRefS}="S"
!{(8,0)  }*++{\ExpleRefSprime}="S'" 
!{(4,0)  }*+++{\ExpleRefSdouble}="S''"
!{(4,4)  }*+++{\ExpleRefX}="X"
!{(8,2)  }*++{\ExpleRefSprimetilde}="S'tilde"
"S"-@{<-}^\sigma"X" "S''"-@{<-}"X" "S'tilde"-@{<-}_(0.4){\sigma'}"X" "S'"-@{<-}"S'tilde" 
"S"-@{-->}_f"S''" "S''"-@{-->}_g"S'"
}$$
\caption{The counter-example in Remark \ref{rem:expleref} ($\star$ denotes a singularity, numbers are self-intersections)} \label{fig:expleref}
\end{figure}

\subsection{Additional properties of the factorization}
Noting that the definition of the maximal multiplicity $\lambda$ (see \S \ref{subsec:11}) 
makes sense even when $S$ is not a Mori fiber space, we observe that our algorithm retains one aspect of the log Sarkisov
program of Bruno and Matsuki \cite{BM}, namely the fact that the first divisorial contraction
involved in each link is a maximal extraction:
\begin{prop} \label{prop:max} The birational morphism $Z\to S$
with exceptional divisor $E_{1}$ constructed in the proof of the
theorem is a maximal extraction. 
\end{prop}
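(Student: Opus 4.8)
The plan is to check the two defining properties of a maximal extraction for the morphism $Z\to S$. That it is an extremal divisorial contraction is already part of the proof of Theorem \ref{th:main}, where $E_1$ was shown to span a $K_Z+B_Z$-extremal ray on $Z$; so everything comes down to proving that its exceptional divisor $E_1$ realizes the maximal multiplicity $\lambda$ of $f$. Keeping the notation $p=\pi\circ\sigma\colon X\to S$ and $p'=\pi'\circ\sigma'\colon X\to S'$ of the proof of Theorem \ref{th:main}, and choosing $H'$ to be a general member of a base point free very ample linear system on $S'$, we must show that $\lambda_{E_1}=m_{E_1}/a_{E_1}$ is the largest of the numbers $\lambda_i=m_i/a_i$, where the $a_i>0$ and $m_i\ge 0$ are the coefficients of the $p$-exceptional curves $C_i$ in $K_X+B_X=p^{*}(K_S+B_S)+\sum a_iC_i$ and $p^{*}H_S=H_X+\sum m_iC_i$.

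First I would record the linear-algebraic expressions of these coefficients. Since $|H'|$ is base point free, a general $H'$ avoids every center on $S'$ of a $p'$-exceptional divisor, so that $H_X=p'^{*}H'$; hence $H_X$ is nef and $\sum m_iC_i=p^{*}H_S-p'^{*}H'$. Intersecting this identity, and likewise $K_X+B_X=p^{*}(K_S+B_S)+\sum a_iC_i$, with the $p$-exceptional curves and using adjunction on each $C_j$ (a smooth rational curve), one gets
\[(-\mathcal M)\,(m_i)_i=(H'\cdot B_{S'})\,\mathbf e_{E_0'},\qquad (-\mathcal M)\,(a_i)_i=\bigl(2-v_j\bigr)_j,\]
where $\mathcal M$ is the intersection matrix of the $p$-exceptional locus, $E_0'$ is the strict transform of $B_{S'}$, and $v_j$ is the valence of $C_j$ in the tree $B_X$. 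The matrix $\mathcal M$ is negative definite and block diagonal: by Proposition \ref{prop:admissible} the exceptional curves of the minimal resolution of a singular point of $S$ distinct from $\bs(f)$ (at most one such point) form a connected component of the $p$-exceptional locus not containing $E_0'$, so $m_i=0$, hence $\lambda_i=0$, along it; these divisors may be ignored, and in particular $E_1$ lies on the complementary component, all of whose curves have center $\bs(f)$ on $S$ — for otherwise $\lambda_{E_1}=0$ would force $f$ to be biregular.

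It then remains to see that on that component the ratio $m_i/a_i$ is maximal at $E_1$. Here I would run the blow-ups $p\colon X\to S$ through the minimal resolution of $\bs(f)$ and then through the successive blow-ups of the base points of $\tilde f$, and use the two recursions for a blow-up centered at a point $x$ lying on $j$ components of the reduced boundary:
\[a_{\mathrm{new}}=2-j+\sum_{D\ni x}a_D,\qquad m_{\mathrm{new}}=\operatorname{mult}_x(H)+\sum_{D\ni x}m_D .\]
According to the construction of $E_1$ there are two cases. If $E_1$ is produced by blowing up the base points of $\tilde f$ lying on the successive strict transforms of $E_0$, those base points $x_0,\dots,x_k$ form a chain, each $x_{l+1}$ being a node of the boundary situated on $E_0$ (which has SNC-log-discrepancy $0$); the first recursion then gives $a_C=a_{E_1}=1$ for every exceptional divisor $C$ over some $x_l$, while the second gives that $m_C$ strictly increases along the chain, so $\lambda_{E_1}$ already dominates the $\lambda_i$ of all these curves — the auxiliary $(-2)$-chain of Figure \ref{fig:proof}. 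If instead $E_1$ is the curve of the minimal resolution of $\bs(f)$ adjacent to $E_0$, one uses that $(S,B_S)$ is plt and that $B_S$ meets the Hirzebruch--Jung chain of $\bs(f)$ only at its extremity, together with the continued-fraction description of \S\ref{sec:sing}, to control $a_{E_1}\in(0,1)$ and the $m$'s and $a$'s along that chain. In both cases, for the curves produced once the base point has left $E_0$ and for the remaining resolution curves, one checks with the same recursions that moving away from $E_0$ in the tree $B_X$ increases $a$ faster than $m$, so the ratio can only decrease; equivalently, the $p$-exceptional $\mathbb{Q}$-divisor $a_{E_1}\sum m_iC_i-m_{E_1}\sum a_iC_i$ vanishes along $E_1$ and is $\le 0$ by a maximum-principle argument on $B_X$ using the signs of the source terms computed above.

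The delicate point, which I expect to be the crux, is exactly this last verification: ruling out that some curve of $B_X$ other than $E_1$ beats it, most notably the curves of the minimal resolution of $\bs(f)$ when $\bs(f)$ is singular, whose log-discrepancies $a<1$ could a priori produce a large ratio $m/a$. It is here that the plt/dlt geometry of the completion must really be used, so that the statement is a genuine (if mild) refinement of the factorization theorem and not a formal consequence of it.
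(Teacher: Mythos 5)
Your proposal attacks the statement head-on, by trying to show directly that $E_1$ realizes the maximum of the ratios $m_i/a_i$ among all exceptional divisors of $X\to S$. The paper does something quite different and much less computational: it invokes the algorithmic characterization of a maximal extraction (from \cite[Prop.~13-1-8]{Mat} and \cite[p.~485]{BM}) as the \emph{last} divisorial contraction obtained by first running a $K+B+\frac{1}{\lambda}H$-MMP over $S$ on the smooth surface $X$ until a log minimal model is reached, and then a $K+B$-MMP over $S$. It then observes that every contraction in this procedure is a genuine $K+B$-contraction supported on the boundary, that the divisors with $\lambda_i=0$ (those coming from a singularity of $S$ away from $\bs(f)$) are contracted in the first step, and that by Lemma \ref{lem:KBneg} the curve $E_1$ can never be contracted while it still has two neighbours in the boundary chain; hence the procedure necessarily terminates with the contraction of $E_1$, which is therefore the maximal extraction, and it coincides with the morphism $Z\to S$ of the factorization. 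No multiplicity is ever computed.

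As written, your argument has a genuine gap, and you identify it yourself: the claim that ``moving away from $E_0$ in the tree $B_X$ increases $a$ faster than $m$, so the ratio can only decrease'' is exactly the content of the proposition on the component of the exceptional locus lying over $\bs(f)$, and it is asserted rather than proved. The dangerous curves are precisely those of the Hirzebruch--Jung resolution of $\bs(f)$ when that point is singular: their log discrepancies $a_i$ lie in $(0,1)$ and can be arbitrarily small, while their multiplicities $m_i$ need not vanish (unlike those over a singular point distinct from $\bs(f)$), so a priori some $\lambda_i=m_i/a_i$ could exceed $\lambda_{E_1}$. Your sketched ``maximum-principle argument on $B_X$'' for the sign of $a_{E_1}\sum m_iC_i-m_{E_1}\sum a_iC_i$ is plausible in outline but is not carried out, and it is exactly where the plt geometry (the analogue of the inequality $c-b>-1$ used in Lemma \ref{lem:KBneg}(3)) must enter. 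Until that estimate is established, the proposal proves only the easy half of the statement (the case where $E_1$ arises from blow-ups along $E_0$, where all relevant $a_i$ equal $1$). If you want to complete a direct proof along these lines you would essentially be re-deriving what the paper gets for free from the MMP characterization; the cleaner route is the paper's: accept the two-step MMP description of a maximal extraction and check, via Lemma \ref{lem:KBneg}, that the procedure cannot contract $E_1$ before everything else.
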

\begin{proof} A maximal extraction (see \cite[prop. 13-1-8]{Mat}
and \cite[p. 485]{BM} for the logarithmic case) is obtained from
a smooth surface which dominates $S$ and $S'$ by a process of the
$K+B$-MMP. So we may use the surface $X$ from the proof of the theorem.
The precise procedure consists in two steps (we use the notations $\lambda$
and $H$ that have been defined in \S \ref{subsec:11}): 
Running first a $K+B+\frac{1}{\lambda}H$-MMP over $S$ until we reach a log minimal model, then running a $K+B$-MMP over $S$; the last contraction gives a maximal extraction. 
The crucial observation is that each extremal divisorial contraction
of the log MMP in the first step is also a contraction for the genuine
$K+B$-MMP. The fact that we are running a log MMP over $S$ guarantees
that the only curves affected by the procedure are contained in the
boundary. By Lemma \ref{lem:KBneg}, as long as $E_{1}$
admits two neighboring components ($E_{0}$ and another one), it cannot
correspond to a $K+B$ negative extremal ray. Remark also that if
$B_{S}$ supports a singularity $q$ which is not a proper base point
for $f$, then all exceptional divisors of the resolution of $q$
have multiplicities $\lambda_{i}=0$ and thus are contracted in the
first step. It follows that the maximal extraction we constructed, which is the last
divisorial contraction $Z\rightarrow S$, must have $E_{1}$ as exceptional
divisor. \end{proof}

\begin{rem} In contrast with the log Sarkisov algorithm of Bruno and Matsuki, we did not assume from the beginning
that the pairs $(S,B_{S})$ and $(S',B_{S'})$ were log-MMP related. 
In our situation, this property is automatic: this is probably a well-known fact, but we can also obtain it as a by-product of the proof of Theorem \ref{th:main}. 
Indeed, letting again $E_{0},\dots,E_{n}$
be the subchain of rational curves in the boundary
$B_{X}$ of $X$ defined in the proof, Lemma
\ref{lem:KBneg} guarantees that all the irreducible components of
$B_{X}$ except the ones contained in that chain can be successively
contracted by a process of the $K+B$ MMP. The surface $W$ obtained
by this procedure has boundary $B_{W}=\sum_{i=0}^{n}E_{i}$ and dominates
both $S$ and $S'$ by a sequence of $K+B$ divisorial contractions.

On the other hand, the elementary example of the identity map of $\mathbb{A}^{2}$
viewed as a rational map from $\mathbb{P}^{1}\times\mathbb{P}^{1}\dashrightarrow\mathbb{P}^{2}$
with a unique proper base point $p$ located at the intersection of
the two rules at infinity and for which the blow-up of $p$ is not a
$K+B$ extremal contraction shows that arbitrary dlt completions 
of a given quasi-projective surface need not be log-MMP related in general. 
So if one wants to extend our factorization result to pairs with reducible boundaries, 
it becomes necessary to at least require from the very beginning that the pairs under 
consideration are log-MMP related. 
\end{rem}

\section{Quasi-projective surfaces with smooth completions} \label{sec:3}\label{sec:smooth}

In this section  we derive from our factorization theorem a general description of the automorphism group of $V$ when $V$ admits a smooth completion $(S,B_S)$ with irreducible boundary $B_S\simeq \mathbb{P}^1$. 
In what follows, such pairs $(S,B_S)$ are simply referred to as \emph{smooth completions}, the inner smooth quasi-projective surface $V=S\smallsetminus B_S$ being implicit. 

Smooth completions $(S,B_S)$ for which $B_S^2<0$ can be quite arbitrary since for instance any blow-up $\sigma : 
S\rightarrow S'$ of a point on a smooth projective surface $S'$ with exceptional divisor $B_S$ gives rise to such a pair $(S,B_S)$. 
In contrast, the possible structures of pairs $(S,B_S)$ with $B_S^2\geq 0$ are much more constrained, as summarized by the following Proposition:

\begin{prop} \label{prop:models} If $(S,B_S)$ is a smooth completion with $B_S^2\geq 0$, then after the contraction of finitely many $(-1)$-curves contained in $V$, we reach a pair of the following type:
\begin{enumerate}
\item $(\mathbb{P}^2,B)$ where $B$ is either a line or a smooth conic,
\item $(\mathbb{F},B)$ where $p\colon \mathbb{F}\rightarrow D$ is a ruled surface over a smooth projective curve $D$ and where $B$ is either a fiber or a section. 
Furthermore, if $B^2\neq 0$ then $\mathbb{F}$ is a Hirzebruch surface $\mathbb{F}_n$, for some $n\geq 0$, and $B$ is a section.
\end{enumerate}
\end{prop}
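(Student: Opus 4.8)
The plan is to run a minimal model program on $S$ relative to the open surface $V$, contracting $(-1)$-curves contained in $V$ until we reach a completion that is minimal in a suitable sense, and then to analyze the possible structures of such a minimal completion using the hypothesis $B_S^2\geq 0$.

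First I would contract, one at a time, all $(-1)$-curves contained in $V=S\smallsetminus B_S$. Since $B_S$ is irreducible, each such contraction keeps $B_S$ irreducible, keeps the ambient surface smooth, and does not affect $V$ outside the contracted curve; moreover $B_S^2$ can only increase under such contractions, so it remains nonnegative. After finitely many such steps we reach a smooth completion $(S,B_S)$ (I keep the same letters) in which $V$ contains no $(-1)$-curve. The key observation is then that $S$ itself is a \emph{relatively minimal} surface once we also allow the possibility of contracting $B_S$: more precisely, by the classification of minimal rational (or ruled) surfaces, either $S$ is minimal, or every $(-1)$-curve on $S$ meets $B_S$. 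I would treat the case where $S$ is a minimal model (then $S\simeq\mathbb{P}^2$ or a Hirzebruch surface or a minimal ruled surface over a curve of positive genus) and the case where $S$ is not minimal separately, showing in the latter case that a further contraction of a $(-1)$-curve meeting $B_S$ either produces a new smooth completion with $B_S^2$ still $\geq 0$ and strictly fewer $(-1)$-curves, or shows $B_S$ itself is a $(-1)$-curve contradicting $B_S^2\geq 0$. An induction on the Picard number then lets me assume $S$ is minimal.

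Once $S$ is a minimal surface, the list is short. If $S\simeq\mathbb{P}^2$, an irreducible curve $B_S$ with $B_S^2\geq 0$ has $B_S^2=d^2$ for $d=\deg B_S\geq 1$; but for the pair to be reachable as a genuine completion one needs $\kappa(V)$ and the structure of $V=\mathbb{P}^2\smallsetminus B_S$ to be compatible — in fact the relevant constraint comes from the fact that after the contractions above $V$ has no $(-1)$-curve, which forces $d\leq 2$, i.e. $B_S$ is a line or a smooth conic (an irreducible nodal or cuspidal conic would have a singular point, excluded since $B_S$ is smooth as part of being a smooth completion with $B_S\simeq\mathbb{P}^1$). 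If $S$ is a minimal ruled surface $p\colon\mathbb{F}\to D$, then an irreducible curve $B_S$ is either contained in a fiber — hence equal to a fiber, with $B_S^2=0$ — or is a multisection; being isomorphic to $\mathbb{P}^1$ and minimal forces it to be a section (a genuine multisection of degree $\geq 2$ over $D$, or a section over a curve $D$ of positive genus, would fail to be rational or would force $B_S^2<0$ depending on the case), and then $B_S^2\geq 0$ together with the theory of Hirzebruch surfaces gives that $\mathbb{F}=\mathbb{F}_n$ with $B_S$ a section of self-intersection $n\geq 0$ unless $B_S^2=0$, where a minimal ruled surface over an arbitrary base is allowed.

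The main obstacle I anticipate is the bookkeeping in the non-minimal case: I must be careful that contracting a $(-1)$-curve $E$ that meets $B_S$ does not decrease $B_S^2$ (it increases it by the intersection multiplicity squared terms, so this is fine) but could in principle create a new $(-1)$-curve inside the new $V$; the clean way to handle this is to do all contractions of $(-1)$-curves inside $V$ first and \emph{then} argue that any remaining $(-1)$-curve must meet $B_S$, and that contracting such a curve and re-running the ``clear out $(-1)$-curves in $V$'' step strictly decreases the Picard number, so the whole process terminates. A second, more delicate point is ruling out high-degree plane curves and genuine multisections: here one uses that in the terminal pair $V$ contains no $(-1)$-curve, together with an adjunction/genus computation showing that $B_S\simeq\mathbb{P}^1$ with $B_S^2\geq 0$ leaves only the listed possibilities. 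I would organize the argument so that the genus and no-$(-1)$-curve constraints are invoked uniformly at the end, after reducing to the minimal case.
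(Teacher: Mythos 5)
There is a genuine gap, and it lies in your reduction to a minimal surface. The proposition only allows contractions of $(-1)$-curves \emph{contained in $V$}; once those are exhausted you must show that the resulting pair $(S,B_S)$ is \emph{already} of type (1) or (2), and the terminal pairs genuinely include non-minimal surfaces such as $(\mathbb{F}_1,B)$ or $(\mathbb{F}_3,B)$ with $B$ an ample section (these occur throughout Section 4 of the paper). Contracting a further $(-1)$-curve that meets $B_S$ is not an allowed move: it collapses an affine line inside $V$ to a point of the new boundary, so it changes the open surface and proves a statement about a different pair. Nor does the induction close in the other direction: knowing that the minimal model $(S',B')$ is on the list does not tell you that $(S,B_S)$, which is obtained from $(S',B')$ by blowing up points of $B'$, is on the list --- ruling that out requires precisely the argument you have not supplied, namely that the absence of $(-1)$-curves in $V$ forces $(S,B_S)$ itself to carry a log Mori fiber space structure. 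That is what the paper does directly: by adjunction $(K_S+B_S)\cdot B_S=-2$, so $K_S+B_S$ is not nef and there is a $(K_S+B_S)$-negative extremal rational curve $C$; if $C^2<0$ then $-2+B_S\cdot C-C^2<0$ forces $C$ to be a $(-1)$-curve disjoint from $B_S$, contradicting the hypothesis on $V$; hence every such extremal ray is of fiber type, $(S,B_S)$ is a log Mori fiber space over a point or a curve, and the two cases of the statement drop out immediately. In short, the relevant MMP is the one for $K_S+B_S$ relative to the boundary, not the absolute MMP for $K_S$.

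A secondary error: your claim that the absence of $(-1)$-curves in $V$ forces $\deg B_S\le 2$ on $\mathbb{P}^2$ cannot be right, since $\mathbb{P}^2$ contains no $(-1)$-curves at all and the condition is vacuous there. The actual constraint is that in this section a smooth completion has $B_S\simeq\mathbb{P}^1$ by definition, so a smooth plane curve of degree $\ge 3$ is excluded by the genus formula; you gesture at this in your final paragraph, but the mechanism you foreground is the wrong one. (Minor: contracting a $(-1)$-curve disjoint from $B_S$ leaves $B_S^2$ unchanged rather than increasing it, which is all you need anyway.)
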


\begin{proof} Up to replacing $(S,B_S)$ by a pair obtained by successively contracting all possible $(-1)$-curves in $S\smallsetminus B_S$ and having the strict transform of $B_S$ for its boundary, we may assume from the very beginning that $S\smallsetminus B_S$ does not contain a $(-1)$-curve. 
Since $(K_S+B_S)\cdot B_S=-2$ by adjunction formula, it follows that $K_S+B_S$ is not nef and so there exists a $K_S+B_S$-negative extremal rational curve $C$ on $S$. Since $B_S^2\geq 0$, the conditions $(K_S+B_S)\cdot C=(K_S+C)\cdot C+B_S\cdot C-C^2=-2+B_S\cdot C-C^2<0$ and $C^2<0$ would imply that $C$ is a $(-1)$-curve disjoint from $B_S$, which is impossible.
Thus $(S,B_S)$ is a log Mori fiber space $g:S\rightarrow D$. 
If $D$ is a point then $S$ is smooth log del Pezzo surface of rank $1$, whence is isomorphic to $\mathbb{P}^2$ and $B_S \simeq \mathbb{P}^1$ is either a line or a smooth conic. Otherwise, since $S$ is smooth $g:S\rightarrow D$ is a $\mathbb{P}^1$-bundle and the condition $(K_S+B_S)\cdot F=(K_S+F)\cdot F+B_S\cdot F=-2+B_S\cdot F<0$ for a fiber $F\simeq \mathbb{P}^1$ of $g$ implies that $B_S\cdot F=0 \textrm{ or } 1$. 
Thus $B_S$ is either  a fiber if $B_S\cdot F=0$ or a section otherwise. 
This immediately implies the remaining assertions.
\end{proof}

\subsection{Triangular birational maps between smooth completions}

Let us first observe that if $(S,B_S)$ is a smooth completion with $B_S^2<0$ then every birational map of smooth completions $f:(S,B_S)\dashrightarrow (S',B_{S'})$ is in fact an isomorphism. 
Indeed, otherwise it would have a proper base point on $B_S$, and since $B_S^2<0$ it would follow that the total transform of $B_S$ in the minimal resolution of $f$ contains no $(-1)$-curve except the strict transform of $B_{S'}$, in contradiction with the fact that $S'$ is smooth. 
It follows in particular that if a smooth quasi-projective surface $V$ admits a smooth completion $(S,B_S)$ with $B_S^2<0$ then the automorphism group of $V$ coincides with the subgroup ${\rm Aut}(S,B_S)$ of ${\rm Aut}(S)$ consisting of automorphisms preserving the boundary $B_S$. 
In contrast, if $(S,B_S)$ and $(S',B_{S'})$ are smooth completions with $B_S^2\geq 0$ or $B_{S'}^2\geq 0$, then strictly birational maps of smooth completions $(S,B_S)\dashrightarrow (S',B_{S'})$ may exist in general.

\subsubsection{Structure of intermediate pairs} Given such a strictly birational map, we prove in the next lemma that the dlt pairs $(S_i,B_{S_i})$ which appear in the factorization of $f$ as in Theorem \ref{th:main} have at most one singularity. 
So the following definition makes sense: If $S_i$ is singular, then we say that it has \emph{index} $k$ if in the minimal resolution of its singularities the exceptional curve which intersects the strict transform of $B_{S_i}$ has self-intersection $-k$. Otherwise, if $S_i$ is smooth then we say that $S_i$ has index 1. 
We note $\ind(S_i)$ the index of $S_i$.

\begin{lem} \label{lem:triangle}
Let $f:(S,B_S)\dashrightarrow (S',B_{S'})$ be a strictly birational map of smooth completions and let $S = S_0\dashrightarrow S_1 \dashrightarrow \dots \dashrightarrow S_n = S'$ be its  factorization into elementary links given by Theorem \ref{th:main}.  Then the following holds :

1) If $B_S^2=0$ then each $S_i$ is smooth with $B_{S_i}^2=B_S^2=0$,

2) If $B_S^2>0$ then each $S_i$ has at most one singularity. Furthermore:

$\quad$ a) If $S_i$ is smooth then $B_{S_i}^2=B_S^2$ whereas if $S_i$ is singular, the boundary of a minimal resolution of $S_i$ is a chain of $B_S^2 + 1$ rational curves with self-intersections $(0,-k_i,-2, \dots,-2)$ where $k_i = \ind(S_i)$;

$\quad$ b)  For all $i = 0, \dots, n-1$ the indexes of $S_i$ and $S_{i+1}$ differ exactly by 1 and if $\ind(S_i) \ge 2$ and $\ind(S_i)  = \ind(S_{i-1}) - 1$ then   $\ind(S_{i+1}) = \ind(S_i) - 1$.
\end{lem}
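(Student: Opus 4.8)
The plan is to prove the lemma by induction on $i$, reducing to the analysis of a single elementary link, and to carry along a strengthened induction hypothesis describing the boundary of the minimal resolution $\tilde S_i$: it is a chain of smooth rational curves, reduced to the single curve $\bar B_{S_i}$ with $\bar B_{S_i}^2 = B_S^2$ when $S_i$ is smooth, and equal to a chain of $B_S^2 + 1$ curves with self-intersections $(0,-\ind(S_i),-2,\dots,-2)$ --- the curve of self-intersection $0$ being $\bar B_{S_i}$ --- when $S_i$ is singular. With this hypothesis, part (1) is immediate: if $B_S^2 = 0$ the ``singular'' alternative would require a chain with a negative number of $(-2)$-curves, which is absurd, so every $S_i$ is smooth with $B_{S_i}^2 = 0$; and assertion (2)a) is exactly the induction hypothesis.

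The inductive step concerns a single link $f_i\colon S_{i-1}\dashrightarrow S_i$. By the proof of Theorem \ref{th:main} it factors through a surface $Z_i$ with exactly two boundary components $E_{i-1},E_i$, dominating $S_{i-1}$ (resp.\ $S_i$) by the single divisorial contraction of $E_i$ (resp.\ $E_{i-1}$). By the remark following Theorem \ref{th:main} each of $E_{i-1},E_i$ carries at most one singular point of $Z_i$, and the definition of a dlt pair with reducible boundary forbids a singular point at the node $E_{i-1}\cap E_i$; hence the minimal resolution $\hat Z_i$ of $Z_i$ has for boundary a chain
\[ \mathrm{res}_{i-1}\ -\ \bar E_{i-1}\ -\ \bar E_i\ -\ \mathrm{res}_i, \]
where $\mathrm{res}_{i-1},\mathrm{res}_i$ are the (possibly empty) Hirzebruch--Jung resolution chains of \S\ref{sec:sing}, and $\tilde S_{i-1}$ (resp.\ $\tilde S_i$) is recovered from $\hat Z_i$ by contracting $\bar E_i$ together with $\mathrm{res}_i$, and then any $(-1)$-curve that subsequently appears (resp.\ by contracting $\bar E_{i-1}$, $\mathrm{res}_{i-1}$ and subsequent $(-1)$-curves). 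So everything reduces to computing self-intersections along the boundary chain of $\hat Z_i$, for which the crucial input is that, $S'=S_n$ being smooth, the strict transform of $B_{S_{i-1}}$ in the minimal resolution of $\tilde f_i$ is the \emph{last} $(-1)$-curve produced by the blow-up process (see \S\ref{sec:boundary}); it follows that exactly $\bar B_{S_{i-1}}^2 + 1$ blow-ups are performed with center on the strict transform of $B_{S_{i-1}}$, and this is precisely what forces the index to change by exactly $1$ at each link.

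Concretely I would distinguish three cases. If $S_{i-1}$ is smooth, then $\bs(f_i)$ automatically lies in the interior of the boundary curve $C_0 = \bar B_{S_{i-1}}$; resolving it requires $B_S^2 + 1$ blow-ups along $C_0$, producing the $(-1)$-curve $E_i$ adjacent to $C_0$ and, as soon as $B_S^2\ge 1$, a tail of auxiliary $(-2)$-curves which the $K+B$-MMP contracts in forming $Z_i$. Tracking self-intersections through these contractions --- which factor through cyclic quotient singularities, so the continued-fraction arithmetic of \S\ref{sec:sing} intervenes --- shows that $S_i$ is singular of index $2$ with resolution chain $(0,-2,-2,\dots,-2)$ (or, when $B_S^2 = 0$, smooth with $B_{S_i}^2 = 0$). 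If $S_{i-1}$ is singular and $\bs(f_i)$ is a smooth point of $S_{i-1}$, then $\bs(f_i)$ again lies in the interior of $C_0$, but now only a \emph{single} blow-up is performed along $C_0$ (as $\bar B_{S_{i-1}}^2 = 0$), the resolution chain $C_1,\dots,C_{B_S^2}$ of the singularity of $S_{i-1}$ remaining attached on the opposite side of $C_0$; after forming $Z_i$ and contracting down to $\tilde S_i$, the analogous bookkeeping yields that $S_i$ has resolution chain $(0,-(\ind(S_{i-1})-1),-2,\dots,-2)$, i.e. the index drops by one (with $S_i$ smooth when $\ind(S_{i-1}) = 2$). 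Finally, if $\bs(f_i)$ is the singular point of $S_{i-1}$, the proper base point of $\tilde f_i$ lies on the chain $C_1,\dots,C_{B_S^2}$; locating it precisely --- forced by the dlt condition together with the pattern $(0,-\ind(S_{i-1}),-2,\dots,-2)$ --- the same computation gives that the index goes up by one, the stated shape being preserved. This establishes the first assertion of (2)b).

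For the monotonicity assertion in (2)b) one couples two consecutive links. Suppose the link $S_{i-1}\dashrightarrow S_i$ lowered the index, so that $S_i$ is singular with $\ind(S_i)\ge 2$; by the second case above this means $\bs(f_i)$ was a smooth point of $S_{i-1}$, and inspection of that case shows that the divisorial contraction $Z_i\to S_i$ creates the singular point of $S_i$ precisely at $\bs(f_i^{-1})$. As the factorization of Theorem \ref{th:main} puts $f_i$ and $f_{i+1}$ in general position, $\bs(f_{i+1})\ne\bs(f_i^{-1})$, so $\bs(f_{i+1})$ is a smooth point of the singular surface $S_i$; we are then again in the second case, whence $\ind(S_{i+1}) = \ind(S_i) - 1$, as claimed. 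I expect the delicate part of the argument to be exactly this bookkeeping of self-intersections --- controlling the $K+B$-MMP contractions that factor through singular surfaces, and pinning down the location of the base point along a resolution chain in the third case --- whereas, once the right invariants are fixed, the remaining verification is a finite and essentially mechanical case-check.
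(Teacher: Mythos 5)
Your proposal is correct and follows essentially the same route as the paper: induction over the links carrying the resolution-chain shape $(0,-k,-2,\dots,-2)$, the key count that smoothness of $S'$ forces the strict transform of each $B_{S_i}$ to become a $(-1)$-curve (hence exactly $\bar B_{S_i}^2+1$ blow-ups on it, so exactly one in the singular case), a case split on whether that blow-up sits at the singular point, and general position of consecutive links for the monotonicity of the index. The only cosmetic difference is that the paper performs all the bookkeeping on the smooth partial resolution $Y$ (your $\hat Z_i$), reading off $\tilde S_{i+1}$ by contracting successive $(-1)$-curves, so the continued-fraction arithmetic you anticipate never actually intervenes.
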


\begin{proof} Let $(S_j,B_j)$ be one of the intermediate dlt completions, and let $f_j:(S_j,B_j)\dashrightarrow (S',B_{S'})$ be the induced birational map. Suppose $S_j$ is smooth, with $B_{S_j}^2 =B_S^2=d \geq 0$ and consider as in the proof of Theorem \ref{th:main} the surface $Y$ containing the strict transforms $E_j$ and $E_{j+1}$ of the boundaries of $S_j$ and $S_{j+1}$. 
Since $S'$ is smooth, the strict transform of $B_j$ in the minimal resolution $X_j$ of $f_j$ is a $(-1)$-curve. It follows that the boundary of $Y$ is equal to a chain of $d + 2$ curves with self-intersections $(-1,-1,-2,\dots,-2)$. If $d=0$ then $S_{j+1}$ is again smooth with $B_{S_{j+1}}^2=0$ and so, 1) follows by induction. 
Otherwise, if $d>0$ then $S_{j+1}$ has a unique singularity and the boundary of the minimal resolution of $S_{j+1}$ is a chain of $d + 1$ curves with self-intersections $(0,-2,\dots,-2)$. 
In particular, $S_{j+1}$ has index 2 (see Figure \ref{fig:triang}, (a), with $k = 2$).
Now we proceed by induction, assuming that $S_i$ has exactly one singularity, and that the boundary of the minimal resolution $\tilde{S}_i$ of $S_i$ is a chain of $d + 1$ rational curves with self-intersections $(0,-(k-1),-2, \dots,-2)$, where $k-1 = \ind(S_i) \ge 2$. 
We denote by $C$ the second irreducible component of this chain which has thus self-intersection $-(k-1)$. 
Let $\tilde{S}_i\leftarrow X_i \rightarrow S'$ be the minimal resolution of the induced birational map $\tilde{S}_i \dashrightarrow S'$. 
Since the strict transform $E_i$ of $B_{S_i}$ is a $0$-curve on $\tilde{S}_i$ and a $(-1)$-curve on $X_i$ as $S'$ is smooth, we see that there is exactly one blow-up on $E_i$, which by definition produces the divisor $E_{i+1}$. Then there are two cases :

a) If the proper base point on $E_i$ coincides with the intersection point of $E_i$ and $C$, then the  boundary of $Y$ is a chain of curves with self-intersections $(-1, -1, -k, -2, \dots, -2)$, where the first three are $E_i$ and $E_{i+1}$ and $C$. 
Thus in this case $S_{i+1}$ has again exactly one singularity and has index $k$ (the picture is again Figure \ref{fig:triang}, (a)). 

b) Otherwise, if the proper base point on $E_i$ is any other point, then the  boundary of $Y$ is a chain of curves with self-intersections $(-2, \dots, -2, -(k-1), -1, -1)$, where the last three are $C$,  $E_i$ and $E_{i+1}$. 
In this case $S_{i+1}$ has again at most one singularity and has index $k-2$ (see Figure \ref{fig:triang}, (b)). 
It is smooth if and only if $k-1=2$ and in this case its boundary $B_{S_{i+1}}$, which is the strict transform of $E_{i+1}$ has again self-intersection $B_{S_{i+1}}^2=d$.

\begin{figure}[ht]
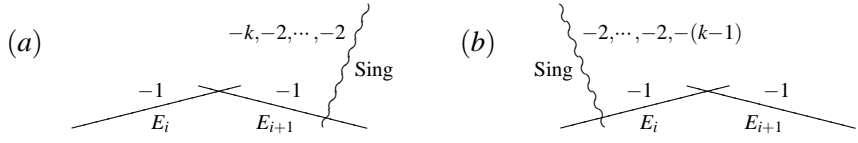
 
$$ (a) \quad \dessinTa \qquad (b) \quad \dessinTb $$
\caption{Boundary of $Y$ in the proof of Lemma \ref{lem:triangle}.} 
\label{fig:triang}
\end{figure}

The last assertion follows from the fact that by construction the center of the blow-up on $E_{i+1}$ producing the next divisor $E_{i+2}$ does not coincide with the intersection point of $E_i$ and $E_{i+1}$.
\end{proof}

\subsubsection{Triangular birational maps}\label{Tr-disc}
\begin{definition} \label{def:triang} A strictly birational map of smooth completions $\phi:(S,B_S)\dashrightarrow (S',B_{S'})$  is called \emph{triangular} if all the intermediate surfaces $S_i$ that appear in the factorization produced by Theorem \ref{th:main} are singular. 
 \end{definition}

Given a smooth pair $(S,B_S)$ with $B_S^2\geq 0$, it follows from Proposition \ref{prop:models} that $S$ dominates birationally a surface $\F$ which is either $\mathbb{P}^2$ or a ruled surface. 

First we discuss the case where $B_S^2 = 0$.
Then the strict transform of $B_S$ in $\F$ still have self-intersection $0$, so $\F$ is a ruled surface $p\colon\mathbb{F}\rightarrow D$ and the strict transform of $B_S$ is either a fiber $F$ or a section of $p$. 
Note that in the second case $\mathbb{F}$ is isomorphic to $\mathbb{P}^1\times \mathbb{P}^1$ in such a way that $p$ coincides with the first projection while the strict transform of $B_S$ is a fiber $F$ of the second projection: up to changing the projection  we can assume that $B_S$ is a fiber, as in the first case. 
Then, it follows from Lemma \ref{lem:triangle} that the notion of a triangular map coincides with that of a link and that every such link consists of the blow-up of a point on $F$ followed by the contraction of its strict transform. 
Assume now that $f\colon(S,B_S)\dashrightarrow (S',B_{S'})$  is a strictly birational map of smooth completions, where $(S,B_S)$ and $(S',B_{S'})$ dominate some ruled surfaces $p\colon\mathbb{F}\rightarrow D$ and $p':\mathbb{F}'\rightarrow D'$ respectively.
Then $B_{S'}^2 = 0$, we can assume that the strict transforms of $B_S$ and $B_{S'}$ are fibers of $p$ and $p'$ respectively, and  the birational transformation $\mathbb{F}\dashrightarrow \mathbb{F}'$ induced by $f$ consists of elementary transformations between ruled surfaces. 
It follows that $f$ preserves the $\mathbb{P}^1$-fibrations $\rho:S\rightarrow D$ and $\rho':S'\rightarrow D'$ induced by these rulings  hence induces an isomorphism $f:V=S\smallsetminus B_S\rightarrow V'=S' \smallsetminus B_{S'}$ of $\mathbb{P}^1$-fibered quasi-projective surfaces \[\xymatrix{ V=S\smallsetminus B_S \ar[r]^-{\sim}_-{f} \ar[d]_{\rho\mid_V} & V'=S'\smallsetminus B_{S'} \ar[d]^{\rho'\mid_{V'}}\\ D\smallsetminus \rho(B_S)  \ar[r]^{\sim} & D'\smallsetminus \rho'(B_{S'}).}\]

Next we consider the case of a triangular map  $\phi\colon(S,B_S)\dashrightarrow (S',B_{S'})$ between smooth completions with $B_S^2=B_{S'}^2=d>0$. Note that since $B_S$ and $B_{S'}$ are smooth rational curves, it follows form Noether's Lemma that the surfaces $S$ and $S'$ are rational. We deduce from the description given in the proof of Lemma \ref{lem:triangle} that the total transform of $B_S$ in the minimal resolution $X$ of $\phi$ is a tree of rational curves with the dual graph pictured in Figure \ref{fig:Minres}.

\begin{figure}[ht]
\begin{pspicture}(0,-0.1)(8,2.3)
\psline(5,1)(8,1)
\psframe(8,0.75)(9.5,1.25)\rput(8.75,1){{\scriptsize $d -1$}}
\rput(7,1){\textbullet}\rput(7,1.5){{\small $C$}}\rput(7.05,0.7){{\scriptsize $-k$}}
\rput(5,1){\textbullet}\rput(5.05,0.7){{\scriptsize $-2$}}
\psline(4.5,0.5)(5,1)\psline(3.5,0.5)(4.5,0.5)
\psframe(2,0.75)(3.5,0.25)\rput(2.75,0.5){{\scriptsize $k -2$}}
\psline(1,0.5)(2,0.5)
\rput(1,0.5){\textbullet}\rput(1,0.8){{\small $E_0$}}\rput(1.05,0.2){{\scriptsize $-1$}}
\psline(4.5,1.5)(5,1)\psline(3.5,1.5)(4.5,1.5)
\psframe(2,1.25)(3.5,1.75)\rput(2.75,1.5){{\scriptsize$k -2$}}
\psline(1,1.5)(2,1.5)
\rput(1,1.5){\textbullet}\rput(1,1.8){{\small ${E_0}'$}}\rput(1.05,1.2){{\scriptsize $-1$}}

\pnode(5.5,0.3){A}\pnode(1.4,0.3){B}\ncbar[angle=-90]{A}{B}
\ncput*{$H$}
\pnode(5.5,1.7){C}\pnode(1.4,1.7){D}\ncbar[angle=90]{C}{D}
\ncput*{$H'$}
\pnode(7.9,1.4){E}\pnode(9.6,1.4){F}\ncbar[angle=90]{E}{F}
\ncput*{$D$}
\end{pspicture}
\caption{Minimal resolution of a triangular map.}
\label{fig:Minres}
\end{figure}
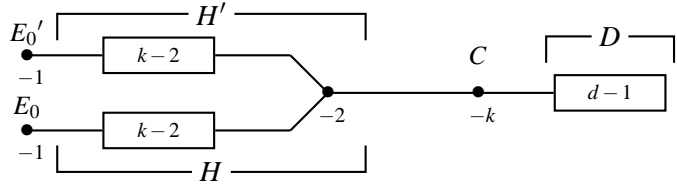

Here $E_0$ and ${E_0}'=E_n$ denote the strict transforms of $B_S$ and $B_{S'}$ respectively, the two boxes on the left represent chains of $k-2$ rational curves with self-intersection $\left(-2\right)$, and the one on the right a chain $D$ of $d-1$ such curves.  
Note also that the proper base point of $\phi$ coincides with the proper base point of the first elementary link $S=S_0\dashrightarrow S_1$ while the one of its inverse coincides with the proper base point of the inverse of the last one $S_{n-1}\dashrightarrow S_n=S'$ (see Corollary \ref{cor:concat}).  

Let $\delta:X\rightarrow \hat{S}$ and $\delta':X\rightarrow \hat{S}'$ be the morphisms given by the smooth contractions of the sub-trees $H\cup H'\cup {E_0}'$ and $H\cup H'\cup E_0$ onto $q=E_0\cap C$ and $q'={E_0}'\cap C$. 
Since $S$ and $S'$ are rational and $E_0^2=0$ and ${E_0'}^2=0$ on $\hat{S}$ and $\hat{S'}$, it follows from Riemann-Roch Theorem that the complete linear systems $|E_0|$ and $|{E_0}'|$ are base point free and define $\mathbb{P}^1$-fibrations $\hat{\rho}\colon\hat{S}\rightarrow \mathbb{P}^1$ and $\hat{\rho}'\colon\hat{S}'\rightarrow \mathbb{P}^1$ both having the image of $C$ as a section. 
Note further that the image of $D$ in $\hat{S}$ and $\hat{S}'$ is a proper subset of a fiber of $\hat{\rho}$ and $\hat{\rho}'$ respectively: indeed, if not empty, the image of $D$ has negative definite self-intersection matrix and hence cannot be equal to a full fiber of a $\mathbb{P}^1$-fibration. By contracting the remaining exceptional divisors, we see that $|E_0|$ (resp. $|{E_0}'|$) coincides with the strict transform on $\hat{S}$ (resp. $\hat{S'}$) of the rational subpencil $\mathcal{P}_{\bs(\phi)}\subset |B_S|$ (resp. $\mathcal{P'}_{\bs(\phi^{-1})}\subset |B_{S'}|$) consisting of curves having local intersection number with $B_S$ (resp. $B_{S'}$) at $\bs(\phi)$ (resp. $\bs(\phi^{-1})$) equal to $d$. 
Equivalently, the fibrations $\hat{\rho}$ and $\hat{\rho}'$ coincide respectively with the minimal resolution of the rational maps $\rho\colon S\dashrightarrow \mathbb{P}^1$ and $\rho'\colon S'\dashrightarrow \mathbb{P}^1$ defined by $\mathcal{P}_{\bs(\phi)}$ and $\mathcal{P'}_{\bs(\phi^{-1})}$. 
These two maps restrict on $V=S\smallsetminus B_S$ and $V'=S'\smallsetminus B_{S'}$ to quasi-projective $\mathbb{A}^1$-fibrations $\rho\mid_V\colon V\rightarrow \mathbb{A}^1=\mathbb{P}^1\smallsetminus \rho(B_S)$ and $\rho'\mid_{V'}\colon V'\rightarrow \mathbb{A}^1=\mathbb{P}^1\smallsetminus \rho'(B_{S'})$, i.e., surjective morphisms with general fiber isomorphic to the affine line $\mathbb{A}^1$.

The birational map $\phi\colon S\dashrightarrow S'$ lifts to $\hat{\phi}\colon\hat{S}\dashrightarrow \hat{S}'$ mapping $\hat{S}\smallsetminus E_0$ isomorphically onto $\hat{S}'\smallsetminus {E_0}'$, having $q$ as unique proper base point while its inverse has $q'$ as a unique proper base point. 
Since the total transforms of $E_0$ and $E_0'$ in $X$ coincide, the lifted $\mathbb{P}^1$-fibrations $\hat{\rho}\circ \delta$ and $\hat{\rho}'\circ \delta'$ on $X$ coincide. 
This implies that $\hat{\phi}$ restricts to an isomorphism of $\mathbb{A}^1$-fibered quasi-projective surfaces 
\[\xymatrix{ V=S\smallsetminus B_S=\hat{S}\smallsetminus \tau^{-1}(B_S) \ar[r]^-{\sim}_-{\hat{\phi}} \ar[d]_{\hat{\rho}\mid_V} & V'=S'\smallsetminus B_{S'}=\hat{S'}\smallsetminus {\tau'}^{-1}(B_{S'}) \ar[d]^{\hat{\rho}'\mid_{V'}}\\ \mathbb{A}^1 \ar[r]^{\sim} & \mathbb{A}^1}\] 
where $\tau:\hat{S}\rightarrow S$ and $\tau':\hat{S}'\rightarrow S'$ denote the contraction of $C$ and the right chain $D$ of $d-1$ curves with self-intersection $-2$ pictured in Figure \ref{fig:Minres} above.

A birational map $\hat{\phi}\colon\hat{S}\dashrightarrow \hat{S}'$ restricting to an isomorphism of $\mathbb{A}^1$-fibered surfaces as above is called a \emph{fibered modification} (see also \cite[2.2.1]{BD}).

In general, if $(S, B_S)$ is a smooth completion with $B_S^2=d>0$ and $p$ is a point of $B_S$ then the base locus of the linear subsystem $\mathcal{P}_p\subset |B_S|$ consisting of curves having a local intersection number with $B_S$ at $p$ equal to $d$ is solved as follows.
We perform $d$ successive blow-ups with centers on the successive strict transforms of $B_S$, until we reach a surface $\hat{S}$ on which the strict transform of $\mathcal{P}_p$ is equal to the complete linear system $|E_0|$ generated by the strict transform $E_0$ of $B_S$. 
Since $E_0$ is a smooth rational curve with $E_0^2 = 0$, $\mathcal{P}_p$ defines a rational pencil $\rho_p\colon S\dashrightarrow \mathbb{P}^1$ which restricts on $V=S\smallsetminus B_S$ to a quasi-projective $\mathbb{A}^1$-fibration $\rho_p\mid_V\colon V\rightarrow \mathbb{A}^1$. 
This leads to the following alternative characterization of triangular maps:

\begin{lem}\label{lem:TrCarac} For a strictly birational map of smooth completions  $\phi:(S,B_S)\dashrightarrow (S',B_{S'})$ with $B_S^2=B_{S'}^2>0$, the following are equivalent :
\begin{enumerate}[a)]
\item $\phi$ is a triangular map;
\item There exist points $p\in B_S$ and $p'\in B_{S'}$ such that $\phi$ maps the pencil $\mathcal{P}_p$ onto the pencil $\mathcal{P}'_{p'}$; If so, the points $p$ and $p'$ are equal to $\bs(\phi)$ and $\bs(\phi^{-1})$ respectively.
\item $\phi$ maps the pencil $\mathcal{P}_{\bs(\phi)}$ onto the pencil $\mathcal{P}'_{\bs(\phi^{-1})}$;
\item $\phi$ induces an isomorphism of $\mathbb{A}^1$-fibered quasi-projective surfaces $(S\smallsetminus B_S,\rho_{\bs(\phi)})\stackrel{\sim}{\rightarrow}(S'\smallsetminus B_{S'},\rho'_{\bs(\phi^{-1})})$.
\end{enumerate}
\end{lem}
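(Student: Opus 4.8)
The plan is to prove the implications (a) $\Rightarrow$ (c) $\Rightarrow$ (d) $\Rightarrow$ (b) $\Rightarrow$ (a), the last of which also yields the supplementary assertion $p=\bs(\phi)$, $p'=\bs(\phi^{-1})$ of (b). The implication (a) $\Rightarrow$ (c), together with (a) $\Rightarrow$ (d), is essentially already contained in the discussion preceding the statement: when $\phi$ is triangular its minimal resolution $X$ has the dual graph of Figure~\ref{fig:Minres}, the $\mathbb{P}^1$-fibrations $\hat\rho,\hat\rho'$ constructed there are the minimal resolutions of the pencils $\mathcal{P}_{\bs(\phi)}$ and $\mathcal{P}'_{\bs(\phi^{-1})}$, and since the total transforms of $E_0$ and ${E_0}'$ on $X$ agree, the lift $\hat\phi$ carries fibers of $\hat\rho$ to fibers of $\hat\rho'$; pushing forward, $\phi$ maps $\mathcal{P}_{\bs(\phi)}$ onto $\mathcal{P}'_{\bs(\phi^{-1})}$. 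For (c) $\Rightarrow$ (d), I would use that $\mathcal{P}_{\bs(\phi)}$ defines a rational map $\rho_{\bs(\phi)}\colon S\dashrightarrow\mathbb{P}^1$ whose indeterminacy locus lies over $\bs(\phi)\in B_S$ and whose restriction to $V$ is the $\mathbb{A}^1$-fibration $\rho_{\bs(\phi)}\mid_V$; if $\phi$ maps $\mathcal{P}_{\bs(\phi)}$ onto $\mathcal{P}'_{\bs(\phi^{-1})}$ it intertwines $\rho_{\bs(\phi)}$ and $\rho'_{\bs(\phi^{-1})}$ up to the automorphism of $\mathbb{P}^1$ sending the image of $B_S$ to that of $B_{S'}$, which restricts over $\mathbb{A}^1$ to the required square in (d). Finally (d) $\Rightarrow$ (b) is immediate: taking closures in $S$, $S'$ of general fibers of the two $\mathbb{A}^1$-fibrations shows that $\phi$ maps $\mathcal{P}_{\bs(\phi)}$ onto $\mathcal{P}'_{\bs(\phi^{-1})}$, so (b) holds with $p=\bs(\phi)$ and $p'=\bs(\phi^{-1})$.

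It remains to prove (b) $\Rightarrow$ (a), which I would break into two steps. The first is to show that if $\phi$ maps some $\mathcal{P}_p$ onto some $\mathcal{P}'_{p'}$ then $p=\bs(\phi)$ and $p'=\bs(\phi^{-1})$; this already gives (b) $\Rightarrow$ (c). Denote by $S\stackrel{\sigma}{\leftarrow}X\stackrel{\sigma'}{\rightarrow}S'$ the minimal resolution of $\phi$, and let $\bar F$ be a general member of $\mathcal{P}_p$, so that $\bar F\sim B_S$, $\bar F^2=d$ and $\bar F\cap B_S=\{p\}$. Suppose $p\neq\bs(\phi)$. Then $\bar F$ avoids the unique proper base point of $\phi$, hence all of its base points, so its strict transform on $X$ still has self-intersection $d$; this strict transform equals that of the curve $\phi_*(\bar F)\in\mathcal{P}'_{p'}\subset|B_{S'}|$, which has self-intersection $d$ as well, and a short computation with the exceptional divisors of $\sigma'$ then forces $\phi_*(\bar F)$ to avoid every base point of $\phi^{-1}$, in particular $\bs(\phi^{-1})$. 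On the other hand $\phi$ is a morphism at $p$ and contracts $B_S$ onto $\bs(\phi^{-1})$, so $\bs(\phi^{-1})=\phi(p)$ lies on $\phi_*(\bar F)$; since $\phi_*(\bar F)\cap B_{S'}=\{p'\}$ this gives $p'=\bs(\phi^{-1})$. But then $\mathcal{P}'_{p'}=\mathcal{P}'_{\bs(\phi^{-1})}$, so a general member of it passes through the base point $\bs(\phi^{-1})$ of $\phi^{-1}$ and its strict transform on $X$ has self-intersection strictly less than $d$; yet that strict transform is also the one of a general member of $\mathcal{P}_p$, which has self-intersection $d$ --- a contradiction. Hence $p=\bs(\phi)$, and applying the same argument to $\phi^{-1}$ gives $p'=\bs(\phi^{-1})$.

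The second step is to show that (c) implies (a); note that at this stage $\phi$ is already known to satisfy (d). Suppose for contradiction that some intermediate surface in the factorization of $\phi$ is smooth and let $S_j$ be the first such one; write $\phi=g\circ h$ with $h\colon S\dashrightarrow S_j$ and $g\colon S_j\dashrightarrow S'$ the corresponding partial compositions. By the choice of $S_j$ the map $h$ is triangular, and since $S_j$ is smooth Corollary~\ref{cor:concat} applies to $g\circ h$ (consecutive links of the canonical factorization being in general position), so that $\bs(h)=\bs(\phi)$, $\bs(g^{-1})=\bs(\phi^{-1})$ and $\bs(h^{-1})\neq\bs(g)$. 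Applying (a) $\Rightarrow$ (d) to the triangular map $h$, the latter carries the $\mathbb{A}^1$-fibration $\rho_{\bs(\phi)}\mid_V$ on $V$ to the $\mathbb{A}^1$-fibration on $S_j\smallsetminus B_{S_j}$ attached to $\bs(h^{-1})$; composing with $\phi$, which by (d) carries $\rho_{\bs(\phi)}\mid_V$ to $\rho'_{\bs(\phi^{-1})}\mid_{V'}=\rho'_{\bs(g^{-1})}\mid_{V'}$, we conclude that $g$ maps the pencil $\mathcal{P}_{\bs(h^{-1})}$ on $S_j$ onto $\mathcal{P}'_{\bs(g^{-1})}$. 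But then the first step, applied to $g$, forces $\bs(h^{-1})=\bs(g)$, contradicting the general position of $h$ and $g$. Therefore all intermediate surfaces are singular, i.e. $\phi$ is triangular.

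I expect the first step of (b) $\Rightarrow$ (a) to be the main obstacle: a naive self-intersection count only produces the weak dichotomy that $p=\bs(\phi)$ or $p'=\bs(\phi^{-1})$, and upgrading this to the full statement needs the extra, somewhat delicate input that $\phi$ is a morphism at $p$ as soon as $p\neq\bs(\phi)$ and then sends $p$ to $\bs(\phi^{-1})$, together with the complementary argument exploiting that a general member of $\mathcal{P}'_{\bs(\phi^{-1})}$ necessarily meets a base point of $\phi^{-1}$. The remaining bookkeeping in the second step --- the applicability of Corollary~\ref{cor:concat} and the identification of the base points of the partial compositions $h$ and $g$ with those of the first and last of their constituent links --- is routine once the first step is available.
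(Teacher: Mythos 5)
Your proposal is correct, and two of its three main ingredients coincide with the paper's. The identification of $p$ and $p'$ with $\bs(\phi)$ and $\bs(\phi^{-1})$ in b) is the paper's self-intersection count on the minimal resolution, just run from the other end: the paper observes that if $p\neq\bs(\phi)$ the image of a general member of $\mathcal{P}_p$ would have self-intersection strictly bigger than $d$, hence could not lie in $|B_{S'}|$; you instead fix the self-intersection at $d$ and deduce that the image must avoid all base points of $\phi^{-1}$ while also containing $\phi(p)=\bs(\phi^{-1})$ --- note that this is already your contradiction, so the subsequent detour through $p'=\bs(\phi^{-1})$ and a second self-intersection count is redundant. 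The implication a) $\Rightarrow$ c)/d) is, as you say, the content of \S\ref{Tr-disc}. Where you genuinely diverge is c) $\Rightarrow$ a). The paper argues directly: c) forces the first $d+1$ base points of $\phi$ and $\phi^{-1}$ to be supported on the boundaries, so the resolution factors through those of the pencils, the induced map $\hat{\phi}$ is a fibered modification, and by \cite[2.2.4]{BD} the total transform of $B_S$ in $X$ has exactly the dual graph of Figure~\ref{fig:Minres}, from which one reads off that every intermediate surface is singular. Your proof by contradiction --- splitting $\phi$ at the first smooth intermediate surface $S_j$, applying a) $\Rightarrow$ d) to the triangular head $h$ and your base-point identification to the tail $g$, and contradicting the general position of consecutive links --- avoids the appeal to \cite{BD} but instead leans on the bookkeeping that $\bs(h^{-1})$ and $\bs(g)$ are the relevant base points of the $j$-th and $(j+1)$-th links; this does close up, by pushing $B_S$ forward link by link using Proposition~\ref{prop:admissible}(3), and Corollary~\ref{cor:concat} applies since $S_j$ is smooth. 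Both routes are valid; the paper's yields the explicit resolution graph of a triangular map as a by-product (which is reused elsewhere, e.g.\ in \S\ref{trconnect}), while yours is more self-contained and makes visible why smooth intermediate models are incompatible with the preservation of a single pencil.
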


\begin{proof} Properties c) and d) are clearly equivalent. If b) holds and the proper base point of $\phi$ is distinct from $p$ then all infinitely near base points of $\phi$ are also distinct from $p$. 
Since $\phi$ contracts $B_S$, the strict transform in $S'$ of a general member of $\mathcal{P}_p$ has self-intersection strictly bigger than $B_S^2=B_{S'}^2$ hence cannot be a general member of a pencil of the form ${\mathcal{P}'}_{p'}$. 
So $\bs(\phi)=p$ and for the same reason $\bs(\phi^{-1})=p'$ which proves the equivalence of b) and c). 
The fact that a triangular map  $\phi:(S,B_S)\dashrightarrow (S',B_{S'})$ maps $\mathcal{P}_{\bs(\phi)}$ onto ${\mathcal{P}'}_{\bs(\phi^{-1})}$ follows from the above discussion. 

It remains to prove that c) implies a). 
If c) holds then since $S$ and $S'$ are both smooth, the strict transforms of $B_S$ and $B_{S'}$ in the minimal resolution $S\stackrel{\sigma}{\leftarrow} X \stackrel{\sigma'}{\rightarrow} S'$ of $\phi$ are both $(-1)$-curves. 
So $\phi$ and $\phi^{-1}$ both have at least $d+1$ base points including infinitely near ones and their first $d+1$ base points are supported on $B_S$ and $B_{S'}$ respectively. 
This implies in turn that $\sigma$ and $\sigma'$ factor respectively through the minimal resolutions $\pi\colon \hat{S}\rightarrow S$ and $\pi'\colon\hat{S}'\rightarrow S'$ of the base points of $\mathcal{P}_{\bs(\phi)}$ and $\mathcal{P'}_{\bs(\phi^{-1})}$ and that the induced birational map $\hat{\phi}:\hat{S}\dashrightarrow \hat{S}'$ is a fibered modification. 
By virtue of \cite[2.2.4]{BD}, the dual graph of the total transform of $B_S$ in $X$ looks like the one pictured in Figure \ref{fig:Minres} for which it is straightforward to check that all intermediate surfaces occurring in the decomposition are singular. Thus $\phi$ is a triangular map. 
\end{proof}

The following Corollary, which is an immediate consequence of the previous Lemma, will be frequently used in the sequel:

\begin{cor}\label{cor:comp} 
If $\phi\colon (S,B_S)\dashrightarrow (S'',B_{S''})$ and $\phi''\colon (S'',B_{S''})\dashrightarrow (S',B_{S'})$ are triangular maps of smooth completions with $B_S^2>0$ and $\phi, \phi''$ in special position, then the composition $\phi'=\phi'' \circ \phi\colon (S,B_S)\dashrightarrow (S',B_{S'})$ is either an isomorphism of pairs mapping $\bs(\phi)$ on $\bs({\phi''}^{-1})$ or a triangular map with $\bs(\phi')=\bs(\phi)$ and $\bs({\phi'}^{-1})=\bs({\phi''}^{-1})$.
\end{cor}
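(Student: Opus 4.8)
The plan is to run the whole argument through the pencil characterization of triangular maps established in Lemma~\ref{lem:TrCarac}. Since $\phi$ and $\phi''$ are triangular maps of smooth completions and $B_S^2=d>0$, Lemma~\ref{lem:triangle} (case 2a, applied to the smooth endpoints $S''$ and $S'$ of the factorizations of $\phi$ and $\phi''$) gives $B_{S''}^2=B_{S'}^2=d$; thus Lemma~\ref{lem:TrCarac} is available for $\phi$, for $\phi''$, and for the composition $\phi'=\phi''\circ\phi$, which is a birational map of smooth completions (it restricts to the isomorphism $V\simeq V''\simeq V'$) and is either biregular or strictly birational. By the special position hypothesis, $\bs(\phi^{-1})=\bs(\phi'')$. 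Applying the implication (a)$\Rightarrow$(c) of Lemma~\ref{lem:TrCarac} to $\phi$ and to $\phi''$ separately, $\phi$ sends the pencil $\mathcal{P}_{\bs(\phi)}$ onto $\mathcal{P}''_{\bs(\phi^{-1})}=\mathcal{P}''_{\bs(\phi'')}$, and $\phi''$ sends $\mathcal{P}''_{\bs(\phi'')}$ onto $\mathcal{P}'_{\bs({\phi''}^{-1})}$. Composing, $\phi'$ sends $\mathcal{P}_p$ onto $\mathcal{P}'_{p'}$ with $p:=\bs(\phi)$ and $p':=\bs({\phi''}^{-1})$.

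Next I would split into the two cases of the statement. If $\phi'$ is strictly birational, then the implication (b)$\Rightarrow$(a) of Lemma~\ref{lem:TrCarac}, applied with the points $p$ and $p'$ produced above, shows that $\phi'$ is triangular, and the concluding clause of assertion~(b) gives in addition $\bs(\phi')=p=\bs(\phi)$ and $\bs({\phi'}^{-1})=p'=\bs({\phi''}^{-1})$, which is the second alternative. If instead $\phi'$ is biregular, it is an isomorphism of pairs and restricts to an isomorphism $\psi\colon B_S\rightarrow B_{S'}$; since an isomorphism preserves linear equivalence and local intersection numbers, $\phi'$ carries the subpencil $\mathcal{P}_q\subset|B_S|$ onto $\mathcal{P}'_{\psi(q)}\subset|B_{S'}|$ for every $q\in B_S$. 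Comparing with the previous paragraph yields $\mathcal{P}'_{\psi(p)}=\mathcal{P}'_{p'}$. As a general member $C$ of $\mathcal{P}'_q$ satisfies $C\cdot B_{S'}=d$ globally and $(C\cdot B_{S'})_q\ge d$, it meets $B_{S'}$ only at $q$, so $q$ is recovered from the pencil $\mathcal{P}'_q$ and the assignment $q\mapsto\mathcal{P}'_q$ is injective on points of $B_{S'}$. Hence $\psi(p)=p'$, i.e. $\phi'$ is an isomorphism of pairs mapping $\bs(\phi)$ onto $\bs({\phi''}^{-1})$, which is the first alternative.

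Once Lemma~\ref{lem:TrCarac} is in hand this is essentially bookkeeping; the only step that needs a moment of care is the biregular case, where one must verify that an isomorphism of pairs sending one pencil $\mathcal{P}_p$ to another $\mathcal{P}'_{p'}$ necessarily sends $p$ to $p'$. I do not expect a serious obstacle here: it suffices to note that $p$ is intrinsically attached to $\mathcal{P}_p$, being the unique point of $B_S$ through which a general member of that pencil passes (equivalently, the point over which the base locus of $\mathcal{P}_p$ is resolved by the $d$ successive blow-ups described before Lemma~\ref{lem:TrCarac}).
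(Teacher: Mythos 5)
Your argument is correct and is exactly the route the paper intends: the paper states this corollary as ``an immediate consequence'' of Lemma~\ref{lem:TrCarac}, and your proof is the straightforward unpacking of that via the pencil characterization, including the minor verification (which the paper leaves implicit) that a biregular isomorphism of pairs carrying $\mathcal{P}_p$ to $\mathcal{P}'_{p'}$ must send $p$ to $p'$.
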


\subsection{Automorphisms of quasi-projective surfaces with smooth completions} 

\subsubsection{Decomposition into triangular maps and normal forms}
 
Given a strictly birational map $f:(S,B_{S})\dashrightarrow(S',B_{S'})$
of smooth completions with $B_{S}^{2}=B_{S'}^{2}>0$, Lemma \ref{lem:triangle} provides
a decomposition of $f$ into a finite sequence 
\[
f=\phi_{n}\dots\phi_{1}\colon (S,B_{S})=(S_{0},B_{S_{0}})\stackrel{\phi_{1}}{\dashrightarrow}(S_{1},B_{S_{1}})\stackrel{\phi_{2}}{\dashrightarrow}\dots\stackrel{\phi_{n}}{\dashrightarrow}(S_{n},B_{S_{n}})=(S',B_{S'})
\]
of triangular maps between smooth completions. Such a decomposition
of $f$ is called \emph{minimal} if there does not exist any other decomposition
with strictly less than $n$ terms. The following Proposition provides
a characterization of these minimal decompositions.

\begin{prop} \label{prop:main} A composition 
\[
f=\phi_{n}\dots\phi_{1} \colon (S,B_{S})=(S_{0},B_{S_{0}})\stackrel{\phi_{1}}{\dashrightarrow}(S_{1},B_{S_{1}})\stackrel{\phi_{2}}{\dashrightarrow}\dots\stackrel{\phi_{n}}{\dashrightarrow}(S_{n},B_{S_{n}})=(S',B_{S'})
\]
of triangular maps between smooth completions with $B_{S}^{2}=B_{S'}^{2}>0$
is minimal if and only if for every $i=1,\ldots,n-1$, the maps $\phi_{i}$ and $\phi_{i+1}$
are in general position. 

Furthermore, if these conditions are satisfied, then the following
holds:

a) The map $f$ is strictly birational with $\mathcal{B}(f)=\mathcal{B}(\phi_{1})$ and
$\mathcal{B}(f^{-1})=\mathcal{B}(\phi_{n}^{-1})$,

b) For every other minimal decomposition
\[
f=\phi_{n}'\dots\phi_{1}' \colon (S,B_{S})=(S_{0}',B_{S_{0}'})\stackrel{\phi_{1}'}{\dashrightarrow}(S_{1}',B_{S_{1}'})\stackrel{\phi_{2}'}{\dashrightarrow}\dots\stackrel{\phi_{n}'}{\dashrightarrow}(S_{n}',B_{S_{n}'})=(S',B_{S'})
\]
of $f$ there exists isomorphisms of
pairs $\alpha_0=\mathrm{id}_S$, $\alpha_{i} \colon (S_{i},B_{S_{i}})\stackrel{\sim}{\rightarrow}(S_{i}',B_{S_{i}'})$,
$i=1,\ldots n-1$ and $\alpha_n=\mathrm{id}_{S'}$, such that $\alpha_{i}\phi_{i}=\phi_{i}'\alpha_{i-1}$
for every $i=1,\dots, n$. 
\end{prop}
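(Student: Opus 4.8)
\emph{Setup and main ingredient.} The plan is to derive all three assertions from three facts already at hand: the composition rule for triangular maps in special position (Corollary~\ref{cor:comp}), the concatenation property for maps in general position (Corollary~\ref{cor:concat}), and the uniqueness up to isomorphism of the link-factorization produced by Theorem~\ref{th:main}, so that the number $s(f)$ of \emph{smooth} intermediate surfaces appearing in that factorization is an invariant of $f$. The crux is the following observation. Let $h=\psi_p\cdots\psi_1$ be \emph{any} composition of triangular maps between smooth completions (with $B^2>0$) such that every consecutive pair $\psi_j,\psi_{j+1}$ is in general position. Since all intermediate surfaces are smooth, the smoothness hypothesis in Corollary~\ref{cor:concat} is automatically satisfied, and an induction on $p$ gives: $h$ is strictly birational, $\bs(h)=\bs(\psi_1)$, $\bs(h^{-1})=\bs(\psi_p^{-1})$, and the link-factorization of $h$ is exactly the concatenation of those of $\psi_1,\dots,\psi_p$. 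Because each $\psi_j$ is triangular, all intermediate surfaces inside its own link-factorization are singular; hence in this concatenation the smooth intermediate surfaces are precisely the $p-1$ targets of $\psi_1,\dots,\psi_{p-1}$, so that $p=s(h)+1$.

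\emph{Proof of the equivalence and of (a).} Applied to $f=\phi_n\cdots\phi_1$, the observation yields $\bs(f)=\bs(\phi_1)$, $\bs(f^{-1})=\bs(\phi_n^{-1})$ and the strict birationality of $f$, which is assertion (a). For the equivalence, the \emph{only if} part is the easy one: if some pair $\phi_i,\phi_{i+1}$ were in special position, then by Corollary~\ref{cor:comp} the composite $\phi_{i+1}\circ\phi_i$ is a single triangular map, or else an isomorphism of pairs that can be absorbed into a neighbouring factor; either way one obtains a strictly shorter decomposition, so a minimal decomposition has all consecutive pairs in general position. For the \emph{if} part, note that a minimal decomposition, being in general position throughout, has length $s(f)+1$ by the observation, so $s(f)+1$ is the minimal length; and the given decomposition, also in general position throughout, has length $n=s(f)+1$, hence is minimal.

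\emph{Proof of (b).} I would argue by induction on $n$, the case $n=1$ being immediate. Let $f=\phi'_n\cdots\phi'_1$ be a second minimal decomposition; by the equivalence it is in general position throughout, so by (a) one has $\bs(\phi_1)=\bs(f)=\bs(\phi'_1)$. Hence $\phi_1^{-1}$ and $\phi'_1$ are in special position, and Corollary~\ref{cor:comp} forces $\theta:=\phi'_1\circ\phi_1^{-1}$ to be an isomorphism of pairs or a triangular map with $\bs(\theta^{-1})=\bs({\phi'_1}^{-1})$. The triangular alternative is impossible: cancelling $\phi_1$ in $f=(\phi'_n\cdots\phi'_2)\circ\theta\circ\phi_1$ would present $g:=\phi_n\cdots\phi_2$ as a composition of $n$ triangular maps in which the pairs $\phi'_j,\phi'_{j+1}$ ($j\ge 2$) are in general position, and also the pair $(\theta,\phi'_2)$ is in general position since $\bs(\theta^{-1})=\bs({\phi'_1}^{-1})\ne\bs(\phi'_2)$ (as $\phi'_1,\phi'_2$ are in general position); so $g$ would admit a general-position decomposition of length $n$, giving $n=s(g)+1$, whereas the general-position decomposition $\phi_n\cdots\phi_2$ of $g$ gives $n-1=s(g)+1$ --- a contradiction. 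Therefore $\theta=\alpha_1$ is an isomorphism $(S_1,B_{S_1})\stackrel{\sim}{\rightarrow}(S'_1,B_{S'_1})$ with $\phi'_1=\alpha_1\phi_1$, and cancelling $\phi_1$ now gives $\phi_n\cdots\phi_2=(\phi'_n\cdots\phi'_3)\circ(\phi'_2\alpha_1)$, two minimal length-$(n-1)$ decompositions of $g$ (the second is readily checked to be in general position throughout). The inductive hypothesis provides intertwining isomorphisms $\alpha_2,\dots,\alpha_{n-1}$ for $g$; together with $\alpha_0=\mathrm{id}_S$, the above $\alpha_1$, and $\alpha_n=\mathrm{id}_{S'}$ these satisfy $\alpha_i\phi_i=\phi'_i\alpha_{i-1}$ for all $i$.

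\emph{Main obstacle.} The real difficulty is the \emph{if} direction of the equivalence: promoting the combinatorial condition ``all consecutive pairs in general position'' to honest length-minimality. Factoring this through the invariant $s(f)$ makes the deduction short, but it depends essentially on the concatenation in Corollary~\ref{cor:concat} being an equality on the nose and on a careful running count, at each stage of the concatenation, of which intermediate surfaces stay smooth and which acquire a singularity (Lemma~\ref{lem:triangle} controls the latter). The analogous bookkeeping in (b) --- verifying that the reduced decomposition of $g$ is still in general position, so that the induction applies --- is the other point that requires care.
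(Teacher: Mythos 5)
Your proof is correct, and it reaches the statement by a genuinely different route from the paper in its two key steps. The paper never invokes Corollary~\ref{cor:concat} here: assertion (a) is proved by a bare\-/hands induction tracking the point onto which $B_{S_0}$ is contracted (using only that $\bs(\phi_{i+1})\neq\bs(\phi_i^{-1})$), and the \emph{if} direction of the equivalence is not proved independently at all --- it falls out at the very end of the proof of (b), by comparing the given general-position decomposition with a genuinely minimal one of length $m\le n$ and showing that the intertwining isomorphisms force $m=n$. You instead route both points through Corollary~\ref{cor:concat}: since every intermediate completion is smooth, its hypothesis is automatic, and the count $p=s(h)+1$ of smooth surfaces in the canonical link factorization shows at once that \emph{all} general-position decompositions have the same length, whence minimality (and (a)) without any reference to (b). This buys a logically cleaner separation of the statements, at the price of leaning on the canonicity of the Theorem~\ref{th:main} factorization and on Corollary~\ref{cor:concat} being an equality on the nose. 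The same invariant also gives you an alternative way to exclude the triangular alternative for $\theta=\phi_1'\phi_1^{-1}$ in (b) (a length count on $g=\phi_n\cdots\phi_2$), where the paper argues instead that the resulting general-position word would be strictly birational yet restrict to the identity on $S'\smallsetminus B_{S'}$. Apart from that, the two proofs of (b) are structurally identical (peel off one factor, obtain an isomorphism of pairs, recurse), and both share the same harmless gloss in the \emph{only if} direction over the degenerate case where $\phi_{i+1}\phi_i$ is an isomorphism with no neighbouring factor to absorb it.
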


\begin{proof} 
First note that by virtue of Corollary \ref{cor:comp}, the composition of two triangular maps
in special position is either triangular or an isomorphism of pairs.
Therefore a composition $\phi_{n}\dots\phi_{1}$ in which for some
$i$ the maps $\phi_{i}$ and $\phi_{i+1}$ are in special position
cannot be minimal. 

Next assume that $f=\phi_n\cdots \phi_1$ is a composition for which any two successive triangular maps are in general position. To prove a), up to changing $f$ with its inverse, it is enough to check that $f$ is strictly birational with $\mathcal{B}(f^{-1})=\mathcal{B}(\phi_{n}^{-1})$.
We proceed by induction on $n$, the case $n=1$ being obvious. 
If $n>1$ then by induction hypothesis $f_{n-1}=\phi_{n-1}\dots\phi_{1}$ is a strictly birational map which contracts the curve $B_{S_{0}}$ to the proper base point $p\in B_{S_{n-1}}$ of $\phi_{n-1}^{-1}$.
The curve $B_{S_{n-1}}$ is contracted in turn by $\phi_{n}$ onto
the proper base point of $\phi_{n}^{-1}$. But since $\phi_{n-1}$
and $\phi_{n}$ are in general position, $p$ is not a base point
of $\phi_{n}$ and so, $f=\phi_{n}f_{n-1}$ contracts $B_{S_{0}}$
onto $\phi_{n}(B_{S_{n-1}})=\mathcal{B}(\phi_{n}^{-1})$. This shows
that $f$ is strictly birational and that $\mathcal{B}(f^{-1})=\mathcal{B}(\phi_{n}^{-1})$.

Now let $f=\phi_{m}'\dots\phi_{1}'\colon(S,B_{S})\dashrightarrow(S',B_{S'})$, $m\leq n$,
be a minimal decomposition of $f$ into triangular maps. 
By Corollary \ref{cor:comp} again, any two successive triangular maps must be in general position. 
If $\phi_{1}^{-1}$ and $\phi_{1}'$ were in general position, then by a) $\phi_{m}'\dots \phi_{1}'\phi_{1}^{-1} \dots\phi_{n}^{-1}$ would be a strictly birational map restricting to the identity on $S'\setminus B_{S'}$, whence on  $S'$, which is absurd.  
Therefore, $\phi_{1}^{-1}$ and $\phi_{1}'$ are in special
position and it follows from Corollary \ref{cor:comp} that $\alpha_{1}=\phi_{1}'\phi_{1}^{-1}\colon(S_{1},B_{S_{1}})\dashrightarrow(S_{1}',B_{S_{1}'})$
is either a triangular map or an isomorphism of pairs. 
But if $\alpha_{1}$ is triangular, then, again by Corollary \ref{cor:comp}, we would have $\mathcal{B}(\alpha_{1})=\mathcal{B}(\phi_{1}^{-1})$ and $\mathcal{B}(\alpha_{1}^{-1})=\mathcal{B}((\phi_{1}')^{-1})$.
The pairs of maps $\alpha_{1}, \phi_{2}'$ and $\phi_{2}^{-1}, \alpha_{1}$
would then be both in general position and $\phi_{m}'\dots\phi_{2}'\alpha_{1}\phi_{2}^{-1}\dots\phi_{n}^{-1}$
would again be strictly birational. So $\alpha_{1}$ is an isomorphism
of pairs and writing $\psi_{2}'=\phi_{2}'\alpha_{1}$, which is again
a triangular map, we deduce in a similar way that $\psi_{2}'\phi_{2}^{-1}\colon(S_{2},B_{S_{2}})\dashrightarrow(S_{2}',B_{S_{2}'})$
is an isomorphism, that we denote by $\alpha_{2}$. By induction,
we define $\psi_{r}'=\phi_{r}'\alpha_{r-1}$ and obtain an isomorphism
$\alpha_{r}=\psi_{r}'\phi_{r}^{-1}\colon(S_{r},B_{S_{r}})\dashrightarrow(S_{r}',B_{S_{r}'})$
for $r=2,\ldots,m$. The last relation obtained is $\alpha_{m}\phi_{m+1}^{-1}\dots\phi_{n}^{-1}=\mathrm{id}_{S'}:(S',B_{S'})\dashrightarrow(S',B_{S'})$
from which we deduce that $m=n$, and $\alpha_{n}=\mathrm{id}_{S'}$.
Choosing $\alpha_{0}=\mathrm{id}_{S}$ we find that $\alpha_{i}\phi_{i}=\phi_{i}'\alpha_{i-1}$
for every $i=1,\ldots,n$. 

This proves on the one hand that the decomposition $f=\phi_n\cdots \phi_1$ was minimal and that b) holds for this decomposition. 
\end{proof}

\begin{definition}\label{def:len} The number $\ell(f)$ of triangular maps occurring in a minimal decomposition of a birational map $f:(S,B)\dashrightarrow (S',B_{S'})$ of smooth completions is called the \emph{length} of $f$. 
\end{definition}

\begin{cor} \label{cor:cancel} Let $f=\phi_{n}\dots\phi_{1}\colon (S,B_{S})\dashrightarrow(S',B_{S'})$
be a strictly birational composition of $n \ge 2$ triangular maps. If $\mathcal{B}(f)\neq\mathcal{B}(\phi_{1})$
then there exists an index $i\in\left\{ 2,\ldots,n-1\right\} $ such
that $\phi_{i}\dots\phi_{1}$ is an isomorphism.
\end{cor}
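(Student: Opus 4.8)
The plan is to argue by induction on $n$, the two engines being Proposition~\ref{prop:main}, which says that a composition of triangular maps is minimal exactly when consecutive factors are in general position and that in that case $\mathcal{B}(f)=\mathcal{B}(\phi_1)$, together with Corollary~\ref{cor:comp}, which controls the composition of two triangular maps in special position. It is convenient to prove the slightly stronger statement: if $f=\phi_n\cdots\phi_1$ is a strictly birational composition of $n\ge 1$ triangular maps with $\mathcal{B}(f)\neq\mathcal{B}(\phi_1)$, then $n\ge 3$ and $\phi_i\cdots\phi_1$ is an isomorphism for some $i\in\{2,\dots,n-1\}$. The cases $n=1$ and $n=2$ are vacuous: for $n=1$ one trivially has $\mathcal{B}(f)=\mathcal{B}(\phi_1)$; for $n=2$, if $\phi_1$ and $\phi_2$ are in general position then Proposition~\ref{prop:main} gives $\mathcal{B}(f)=\mathcal{B}(\phi_1)$, and if they are in special position then Corollary~\ref{cor:comp} forces $f=\phi_2\phi_1$ to be either an isomorphism of pairs, contradicting strict birationality, or a triangular map with $\mathcal{B}(f)=\mathcal{B}(\phi_1)$.

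For the inductive step, assume $n\ge 3$. Since $\mathcal{B}(f)\neq\mathcal{B}(\phi_1)$, Proposition~\ref{prop:main}(a) shows that the decomposition $f=\phi_n\cdots\phi_1$ is not minimal, so there is an index $i_0$ with $1\le i_0\le n-1$ for which $\phi_{i_0}$ and $\phi_{i_0+1}$ are in special position. By Corollary~\ref{cor:comp}, the map $\psi:=\phi_{i_0+1}\phi_{i_0}$ is either an isomorphism of pairs, or a triangular map with $\mathcal{B}(\psi)=\mathcal{B}(\phi_{i_0})$ and $\mathcal{B}(\psi^{-1})=\mathcal{B}(\phi_{i_0+1}^{-1})$. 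If $\psi$ is an isomorphism and $i_0=1$, then $\phi_2\phi_1=\psi$ is already the sought isomorphism, and $2\in\{2,\dots,n-1\}$ because $n\ge 3$. If $\psi$ is an isomorphism and $i_0\ge 2$, I would absorb it into $\phi_{i_0-1}$: the map $\psi\phi_{i_0-1}$ is again triangular (composing a triangular map of smooth completions with an isomorphism of pairs leaves unchanged the singular intermediate surfaces of its factorization), and
\[
f=\phi_n\cdots\phi_{i_0+2}\,(\psi\phi_{i_0-1})\,\phi_{i_0-2}\cdots\phi_1
\]
presents $f$ as a strictly birational composition of $n-2$ triangular maps with first factor $\phi_1$, hence still with $\mathcal{B}(f)\neq\mathcal{B}(\phi_1)$. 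Finally, if $\psi$ is triangular, then merging $\phi_{i_0}$ with $\phi_{i_0+1}$ into $\psi$ presents $f$ as a strictly birational composition of $n-1$ triangular maps whose first factor is $\phi_1$ if $i_0\ge 2$ and is $\psi$ if $i_0=1$; in the latter case $\mathcal{B}(\psi)=\mathcal{B}(\phi_{i_0})=\mathcal{B}(\phi_1)$, so in either case $\mathcal{B}(f)$ differs from the base point of the first factor of the new decomposition.

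In the last two situations I would apply the induction hypothesis to the shortened decomposition and translate back. The key observation is that every partial composition of the shortened decomposition coincides with one of the original partial compositions $\phi_k\cdots\phi_1$: the only ``merged'' partial composition that occurs equals $\psi\,\phi_{i_0-1}\cdots\phi_1=\phi_{i_0+1}\phi_{i_0}\cdots\phi_1$, while all the others are literally of the form $\phi_k\cdots\phi_1$. An elementary check of the index ranges then shows that the index $k$ furnished by the induction hypothesis always lies in $\{2,\dots,n-1\}$, which closes the induction. I expect the main (and essentially only) difficulty to be exactly this bookkeeping of indices under the merging and absorption operations, together with checking that the small values of $n$ produced by the reductions are genuinely vacuous; both become routine once the reduction scheme above is fixed.
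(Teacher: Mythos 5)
Your proof is correct, and it rests on the same two pillars as the paper's argument --- Proposition~\ref{prop:main} and Corollary~\ref{cor:comp}, combined with induction on $n$ --- but the induction step is organized differently. The paper always works at the left end of the word: it splits on whether $\phi_2$ and $\phi_1$ are in special or general position; in the special case it merges them (or stops if the product is an isomorphism), and in the general case it first argues that the suffix $\phi_n\cdots\phi_2$ is itself strictly birational with $\mathcal{B}(\phi_n\cdots\phi_2)\neq\mathcal{B}(\phi_2)$, applies the induction hypothesis to that shorter word to extract an isomorphism $\alpha=\phi_j\cdots\phi_2$, absorbs $\alpha$ into $\phi_{j+1}$, and recurses. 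You instead invoke the minimality criterion of Proposition~\ref{prop:main} to locate a special-position pair \emph{anywhere} in the word and contract it there, shortening the word by one or two letters. Your route avoids the paper's auxiliary argument about the suffix, at the price of the index bookkeeping you describe; that bookkeeping does go through (in every branch the translated index lands in $\{2,\dots,n-1\}$, and the degenerate branches where the shortened word has length at most $2$ are ruled out by your strengthened induction hypothesis). The only slip is cosmetic: when $i_0=2$ and $\psi=\phi_3\phi_2$ is an isomorphism, the new first factor is $\psi\phi_1$ rather than $\phi_1$; but its proper base point is still $\mathcal{B}(\phi_1)$ since $\psi$ is biregular, so the hypothesis of the induction is preserved exactly as you need.
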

\begin{proof}
We proceed by induction on the number of triangular maps in the composition. If $n=2$
then if $\phi_{2}$ and $\phi_{1}$ are in general position or if
$\phi_{2}$ and $\phi_{1}$ are in special position and $\phi_{2}\phi_{1}$
is triangular then $\mathcal{B}(\phi_{2}\phi_{1})=\mathcal{B}(\phi_{1})$
by Proposition \ref{prop:main} and Corollary \ref{cor:comp}. So $\mathcal{B}(\phi_{2}\phi_{1})=\mathcal{B}(\phi_{1})$
unless $\phi_{2}\phi_{1}$ is an isomorphism. 

Now suppose that $n>2$. 

If $\phi_{2}$ and $\phi_{1}$ are in special position then either
$\phi_{2}\phi_{1}$ is an isomorphism and we are done, or $\phi_{2}'=\phi_{2}\phi_{1}$
is a triangular map with proper base point equal to that of $\phi_{1}$.
Since $f=\phi_{n}\dots\phi_{2}'$ with $\mathcal{B}(f)\neq\mathcal{B}(\phi_{2}')$
the induction hypothesis implies that there exists $i\in\left\{ 3,\ldots,n-1\right\} $
such that $\phi_{i}\dots\phi_{2}'=\phi_{i}\dots\phi_{2}\phi_{1}$
is an isomorphism. 

If $\phi_{2}$ and $\phi_{1}$ are in general position and either
$\phi_{n}\dots\phi_{2}$ is an isomorphism or $\mathcal{B}(\phi_{n}\dots\phi_{2})=\mathcal{B}(\phi_{2})$. Then in both cases $\mathcal{B}(\phi_{n}\dots\phi_{2}\phi_{1})$ would be equal
to $\mathcal{B}(\phi_{1})$. So $\phi_{n}\dots\phi_{2}$ is not an
isomorphism and its proper base point is different from that of $\phi_{2}$.
By induction hypothesis, there exists an index $j\in\left\{ 3,\ldots,n-1\right\} $
such that $\alpha=\phi_{j}\dots\phi_{2}$ is an isomorphism. Replacing
$\phi_{j+1}$ by the triangular map $\phi_{j+1}'=\phi_{j+1}\alpha$,
we have $f=\phi_{n}\dots\phi_{j+1}'\phi_{1}$ and we are done by
induction. 
\end{proof}

One can think of Proposition \ref{prop:main} as a kind of presentation by generators and relations, the second part saying in particular that there is essentially no relation except the obvious ones given by Corollary \ref{cor:comp}. 
However, even if $(S,B_S) = (S',B_{S'})$ and $f$ is the birational map induced by an automorphism of $V=S\smallsetminus B_S$, in general the triangular maps $\phi_i$ are not birational transformations between isomorphic smooth completions of $V$ (see \S \ref{expl:+4} and \ref{expl:+5} for illustrations of such situations). 
If one insists in having generators that live on a particular model, one possibility is to fix a rule to pass from each possible model to the distinguished one $(S,B_S)$. This is what is done in \cite{GD2}, where the relations are then expressed in terms of (intricate) amalgamated products. \\

Another consequence of  Proposition \ref{prop:main} is that it enables to obtain \emph{ normal forms} for automorphisms of quasi-projective surfaces admitting a smooth completion.
In the following result, and in the rest of the paper, we shall use the notation 
$f^{\psi}$ to denote the conjugate $\psi f\psi^{-1}$. 

\begin{cor}\label{cor:nform}
Let $f\colon (S,B_S)\dashrightarrow (S,B_S)$ be a birational self-map of a smooth completion. 
Then there exists a birational map of smooth completions $\psi\colon(S,B_S)\dashrightarrow (S',B_{S'})$ such that the conjugate $f^{\psi}$ has one of the following properties :

 a) $f^{\psi}$ is a biregular automorphism of the pair $(S',B_{S'})$, 

 b) $f^{\psi}$ is a triangular self-map of $(S',B_{S'})$ with the pair $f^{\psi},f^{\psi}$ in special position,

 c) The pair  $f^{\psi}$, $f^{\psi}$ is in general position.
\end{cor}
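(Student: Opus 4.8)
The natural approach is induction on the length $\ell(f)$ of a minimal decomposition of $f$ into triangular maps, as provided by Lemma~\ref{lem:triangle} and Proposition~\ref{prop:main}; informally this is the procedure of cyclically reducing a word. First I would dispose of the degenerate cases. If $B_S^2<0$ then every birational map of smooth completions out of $(S,B_S)$ is biregular, so $f$ itself is a biregular automorphism of $(S,B_S)$ and a) holds with $\psi=\mathrm{id}$; if $B_S^2=0$ the induction below applies verbatim with ``link'' in place of ``triangular map'' (for $B_S^2=0$ the two notions coincide). So assume $B_S^2>0$. If $\ell(f)=0$ then $f$ is biregular and a) holds with $\psi=\mathrm{id}$; if $\ell(f)=1$ then $f$ is itself a triangular self-map of $(S,B_S)$, and either the pair $f,f$ is in special position, so b) holds with $\psi=\mathrm{id}$, or it is in general position, so c) holds with $\psi=\mathrm{id}$.

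Now suppose $\ell(f)=n\ge 2$ and fix a minimal decomposition $f=\phi_n\cdots\phi_1$, with triangular maps $\phi_i\colon(S_{i-1},B_{S_{i-1}})\dashrightarrow(S_i,B_{S_i})$ between \emph{smooth} completions, $S_0=S_n=S$, and every consecutive pair $\phi_i,\phi_{i+1}$ in general position. The decisive step is to conjugate by $\psi=\phi_1$, a genuine birational map of smooth completions $(S,B_S)\dashrightarrow(S_1,B_{S_1})$; one has
\[
f^{\phi_1}=\phi_1 f\phi_1^{-1}=\phi_1\phi_n\phi_{n-1}\cdots\phi_2,
\]
a birational self-map of $(S_1,B_{S_1})$. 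I would then split according to the mutual position of the ``outer'' maps $\phi_1$ and $\phi_n$: since $\phi_n$ maps onto $S_n=S=S_0$, which is the source of $\phi_1$, the points $\mathcal{B}(\phi_n^{-1})$ and $\mathcal{B}(\phi_1)$ both lie on $S$ and may be compared.

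If $\phi_n$ and $\phi_1$ are in general position, then the decomposition $f^{\phi_1}=\phi_1\circ(\phi_n\cdots\phi_2)$, regrouped as the sequence of $n$ triangular maps $\phi_2,\dots,\phi_n,\phi_1$, again has all consecutive pairs in general position, hence is a minimal decomposition of $g:=f^{\phi_1}$ by Proposition~\ref{prop:main}; in particular $g$ is strictly birational and $\mathcal{B}(g)=\mathcal{B}(\phi_2)$, $\mathcal{B}(g^{-1})=\mathcal{B}(\phi_1^{-1})$. As $\phi_1$ and $\phi_2$ are in general position, $\mathcal{B}(\phi_1^{-1})\ne\mathcal{B}(\phi_2)$, so the pair $g,g$ is in general position and c) holds with $\psi=\phi_1$. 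If instead $\phi_n$ and $\phi_1$ are in special position, then Corollary~\ref{cor:comp} applied to $\phi_1\circ\phi_n$ shows that $\phi_1\phi_n$ is either an isomorphism of pairs or a triangular map; writing $f^{\phi_1}=(\phi_1\phi_n)\phi_{n-1}\cdots\phi_2$ and using that the composition of a triangular map with an isomorphism of pairs is again triangular, we exhibit $f^{\phi_1}$ as a composition of at most $n-1$ triangular maps (in fact at most $n-2$, resp.\ as a biregular map when $n=2$, in the isomorphism subcase). Thus $\ell(f^{\phi_1})<n$, and the induction hypothesis furnishes a birational map of smooth completions $\psi'$ with $(f^{\phi_1})^{\psi'}$ of one of the three required forms; then $\psi=\psi'\circ\phi_1$ works, since $(f^{\phi_1})^{\psi'}=f^{\psi}$.

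The crux of the argument is the special-position subcase: one must know that sliding $\phi_n$ through the conjugation and composing it with $\phi_1$ does not merely concatenate but genuinely merges two triangular maps into one (or annihilates them), thereby strictly decreasing $\ell$ — this is precisely the content of Corollary~\ref{cor:comp}, and it is what lets the induction close. The remaining points needing care are bookkeeping: that the intermediate surfaces of the triangular decomposition are smooth so that $\phi_1$ qualifies as a $\psi$, that the rotated word in the general-position subcase is still minimal so that Proposition~\ref{prop:main}a) applies to read off $\mathcal{B}(g)$ and $\mathcal{B}(g^{-1})$, and the small values $n=1,2$.
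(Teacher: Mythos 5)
Your proposal is correct and follows essentially the same route as the paper: induct on the length of a minimal triangular decomposition from Proposition \ref{prop:main}, and when the two outer factors are in special position, conjugate by one of them so that Corollary \ref{cor:comp} merges $\phi_1$ and $\phi_n$ and strictly decreases the length. The only (immaterial) difference is that you conjugate by $\phi_1$ where the paper conjugates by $\phi_n^{-1}$, and you verify case c) on the rotated word rather than observing directly via Proposition \ref{prop:main}a) that $\mathcal{B}(f)=\mathcal{B}(\phi_1)\neq\mathcal{B}(\phi_n^{-1})=\mathcal{B}(f^{-1})$ already puts $f$ itself in case c) with $\psi=\mathrm{id}$.
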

\begin{proof}
Suppose $f$ is not biregular, and consider a minimal factorization $f = \phi_n \dots \phi_1$ into triangular maps given by Proposition \ref{prop:main}. 
If the pair $f,f$ is in special position and $f$ is not triangular (that is, $n \ge 2$), then we consider the conjugate $f_{n-1}=\phi_n^{-1}f\phi_n:(S_{n-1},B_{S_{n-1}})\dashrightarrow (S_{n-1},B_{S_{n-1}})$. 
By hypothesis, $\phi_1$ and $\phi_n$ are in special position and so, by Corollary \ref{cor:comp}, $\phi_1 \phi_n$ is either an isomorphism or a triangular map. Thus $f_{n-1}=\phi_{n-1} \dots \phi_2 (\phi_1 \phi_n)$ has length at most $n-1$ and we are done by induction. 
\end{proof}

The existence of normal forms up to conjugacy for automorphisms of $\mathbb{A}^2$ was first noticed by Friedland and Milnor \cite{FM} as a consequence of Jung's Theorem. 
This was the starting point for an exhaustive study of the possible dynamical behavior of these  automorphisms (see \cite{BS8} and references therein). 
In particular, if $g=f^{\psi}$ satisfies Property c) in the conclusion of Corollary \ref{cor:nform} and  has length $n$ then its iterates $g^k$, $k \in \Z$, have length $n|k|$. 
Such a map is thus similar to a composition of generalized H\'enon maps and so one can expect that these maps will always present a chaotic dynamical behavior. 
On the other hand, any finite automorphism of $V$, any one-parameter flow $f_t$ of automorphisms of $V$, or more generally every automorphism contained in an algebraic subgroup of $\Aut(V)$ (see Proposition \ref{prop:algG} below) always corresponds to Case a) or b) in Corollary \ref{cor:nform}.

\subsubsection{Tame automorphisms} \label{sub:tame}

Given any smooth completion $(S,B_{S})$ of $V$, we denote by $\Aut(S,B_{S})$ the group of automorphisms of the pair $(S,B_{S})$. 
Note that since  $B_{S}$ is the support of an ample divisor on $S$, $\Aut(S,B_{S})$ is an algebraic group. 

For every point $p\in B_{S}$, Corollary \ref{cor:comp} implies that the set of triangular self-maps $\phi\colon (S,B_{S})\dashrightarrow (S,B_{S})$ with $\bs(\phi)=\bs(\phi^{-1})=p$ and automorphisms of the pair $(S,B_{S})$ fixing $p$ is a group, which we shall denote by $\tr(S,B_{S},p)$. 
By Lemma \ref{lem:TrCarac}, the latter coincides with the subgroup of automorphisms of $V=S\smallsetminus B_{S}$ preserving the quasi-projective $\mathbb{A}^1$-fibration ${\rho}_{p}:V\rightarrow \mathbb{A}^1$ induced by the rational pencil $\mathcal{P}_{p}$.  
The groups $\tr(S,B_{S},p)$ are not algebraic, but they are countable increasing unions of algebraic subgroups. More precisely, see \cite[Lemma 2.2.3]{BD}, there exists a birational map $\tau:S\dashrightarrow \mathbb{A}^2=\mathrm{Spec}(\mathbb{C}[x,y])$ restricting to a morphism on $V$ such that $\tr(S,B_{S},p)$ is isomorphic to the subgroup of $\Aut(\mathbb{A}^2)$ consisting of automorphisms of the form $(x,y)\mapsto (ax+b,cy+P(x))$, where $a,c\in \mathbb{C}^*$, $P(x)\in \mathbb{C}[x]$, preserving the points blown-up by $\tau$, including infinitely near ones. For every $d\geq 0$, the set of all automorphisms $(x,y)\mapsto (ax+b,cy+P(x))$ of $\mathbb{A}^2$ with $P(x)$ of degree $\leq d$ is an algebraic group and those preserving the points blown-up by $\tau$ form of closed subgroup of it, whence an algebraic group. 

\begin{definition} \label{def:tame}
Let $V$ be a quasi-projective surface admitting a smooth completion $(S,B_S)$, let $\M =\{(\psi_\alpha,p_\alpha)\}_{\alpha \in A}$ be a nonempty collection of pairs consisting for each $\alpha\in A$ of a birational map of smooth completions $\psi_\alpha \colon(S,B_S)\dashrightarrow (S_\alpha,B_{S_\alpha})$ and a point $p_\alpha\in B_{S_\alpha}$. 
An automorphism of $V$ considered as a birational self-map $f$ of $(S,B_S)$ is called:   

a) $\M$-\emph{affine} (short for \emph{affine relatively to the models in $\M$}) if there exists $\alpha$ such that $f^{\psi_{\alpha}}$ is an element of $\Aut(S_\alpha, B_{S_\alpha})$;

b) $\M$-\emph{Jonqui\`eres} if there exists $\alpha$ such that $f^{\psi_{\alpha}}$ is an element of $\tr(S_\alpha,B_{S_\alpha}, p_\alpha)$.

\noindent We denote by $\M\jun(V)$ the subgroup of $\Aut(V)$ generated by $\M$-affine and $\M$-Jonqui\`eres automorphisms. We call it the group of $\M$-\emph{tame automorphisms} of $V$. 
\end{definition}
 
This notion of tameness depends a priori on the choice of the collection $\M$. However by taking the family $\M_{\mathrm{can}}$ consisting of all pairs $(\Psi,p')$ where $\Psi\colon(S,B_S)\dashrightarrow (S',B_{S'})$ is a birational map of smooth completions and $p'$ is a point of $B_{S'}$, we obtain a canonical intrinsic notion of tameness. 

An automorphism of $V$ with associated birational self-map $f$ of $(S,B_S)$ is said to be \emph{generalized affine} (resp. \emph{generalized Jonqui\`eres}) if $f$ is $\M_{\mathrm{can}}$-affine (resp. $\M_{\mathrm{can}}$-Jonqui\`eres). 
We denote 
$$\GTA(V)=\M_{\mathrm{can}}\jun(V)$$ 
the subgroup of $\Aut(V)$ generated by generalized Jonqui\`eres and generalized affine automorphisms.
Its elements will be called \emph{generalized tame} automorphisms of $V$.

In other words, $\GTA(V)$ is generated by automorphisms of $V$ which either preserve a quasi-projective $\mathbb{A}^1$-fibration $V\rightarrow \mathbb{A}^1$ induced by a pencil of the form $\mathcal{P}_p$ on a suitable smooth completion of $V$ or extend to biregular automorphisms of suitable smooth completions of $V$. 
In fact, since  for an element $f\in \Aut(S,B_{S})$ the induced action of $f$ on $B_{S}\simeq \mathbb{P}^1$ always has a fixed point $p$, it follows that  every generalized affine automorphism is also generalized Jonqui\`eres. 
So $\GTA(V)$ coincides with the normal subgroup of $\Aut(V)$ generated by automorphisms preserving an $\mathbb{A}^1$-fibration $V\rightarrow \mathbb{A}^1$ as above.

\begin{prop}\label{prop:algG}
Let $V$ be a quasi-projective surface admitting a smooth completion.
Then for every algebraic subgroup $G$ of $\Aut(V)$, there exists a smooth completion $(S,B_S)$ of $V$ such that $G$ is a subgroup of $\Aut(S,B_{S})$ or of $\tr(S,B_{S},p)$.  
In particular, every algebraic subgroup of $\Aut(V)$ consists of generalized tame automorphisms of $V$.
\end{prop}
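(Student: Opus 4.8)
The plan is to begin with the easy case $B_S^2<0$: as recalled at the beginning of \S\ref{sec:smooth}, every birational self-map of a smooth completion with negative boundary self-intersection is biregular, so $\Aut(V)=\Aut(S,B_S)$ and there is nothing to prove; the same observation shows that the sign of $B_S^2$, and by Lemma \ref{lem:triangle} its exact value when $B_S^2\ge 0$, is an invariant of $V$, so one may assume $d:=B_S^2\ge 0$ for every smooth completion of $V$. When $d=0$, the pencil $|B_S|$ is base-point-free and defines a $\mathbb{P}^1$-fibration $S\to\mathbb{P}^1$; being canonical, it is preserved by every element of $\Aut(V)$, whether it extends to $(S,B_S)$ or is strictly birational there, and one then checks, using the structure of the groups $\tr(S,B_S,p)$ recalled in \S\ref{sub:tame}, that $G$ is contained in some $\tr(S',B_{S'},p)$ after a mild equivariant adjustment of the completion. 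So the substantial case is $d>0$, to which the rest of the argument is devoted.

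The key technical step for $d>0$ is to show that the length function $\ell$ of Definition \ref{def:len} is \emph{bounded} on $G$. Since $G$ acts algebraically on $V$, an equivariant completion theorem (Sumihiro, together with equivariant resolution) produces a smooth projective surface $S'$ carrying a possibly reducible simple normal crossing boundary on which $G$ acts biregularly; after fixing once and for all a birational map $\theta\colon S'\dashrightarrow S$ inducing the identity on $V$, one has $f_g=\theta\circ(g|_{S'})\circ\theta^{-1}$ for every $g\in G$, and since $g|_{S'}$ is an isomorphism the number of blow-ups needed to resolve $f_g$ --- hence $\ell(g)$ --- is bounded by a constant depending only on $\theta$. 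Set $N=\max_{g\in G}\ell(g)$. Combining this bound with the Remark following Corollary \ref{cor:nform} (in the ``general position'' Case c) the iterates $g^{k}$ have length $|k|\,\ell(g)$, which contradicts boundedness) already forces every non-biregular $g\in G$ to be in special position with itself, i.e. $\bs(g)=\bs(g^{-1})$.

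I would then proceed by induction on $N$. If $N=0$ every element of $G$ is biregular on $(S,B_S)$, so $G\subseteq\Aut(S,B_S)$ and we are done. If $N\ge 1$, I would use the composition and cancellation results --- Corollaries \ref{cor:comp} and \ref{cor:cancel} together with Proposition \ref{prop:main} --- to analyse products: writing minimal decompositions of two elements $g,h$ of $G$ and using that $\ell(gh)\le N<2N$ forces cancellation at the junction of the two factorizations, one shows that the non-biregular elements of $G$ are rigid enough that, after possibly conjugating by a triangular map of smooth completions so as to lower $N$ and applying the induction hypothesis to the (still algebraic) conjugated group, one reaches the situation where $N=1$ and all the non-biregular elements of $G$ share one proper base point $p\in B_S$. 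In that situation the biregular elements of $G$ automatically fix $p$ --- composing a biregular $h\in G$ with a fixed non-biregular $g$ having base point $p$, the product $hg$ is again non-biregular hence in special position with itself, which gives $h(p)=p$ --- and since the non-biregular elements, having length $1$, are triangular, one concludes $G\subseteq\tr(S,B_S,p)$. The final assertion is then immediate: if $G$ is conjugate into $\Aut(S',B_{S'})$ its elements are $\M_{\mathrm{can}}$-affine, and if conjugate into $\tr(S',B_{S'},p)$ they are $\M_{\mathrm{can}}$-Jonqui\`eres, so in both cases $G\subseteq\GTA(V)$.

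The main obstacle is precisely this inductive step: upgrading the normal form available for a \emph{single} automorphism (Corollary \ref{cor:nform}) to a normal form for the whole group, that is, producing one conjugation, or one $\mathbb{A}^1$-fibration $\rho_p$, that works simultaneously for all of $G$. The rigidity furnished by boundedness of $\ell$ is what should make all the maximal-length elements of $G$ ``point in the same direction'', so that a single triangular conjugation lowers $N$ for every element at once; carrying this out requires careful bookkeeping of proper base points under composition, and the edge cases (the fibered case $d=0$, and matching the literal definition of $\tr(S,B_S,p)$) will also need attention.
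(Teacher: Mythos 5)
Your proposal follows essentially the same route as the paper: Sumihiro's equivariant completion theorem bounds the length function on $G$, and conjugating the whole group by the first triangular factor of a maximal-length element drives that bound down until every element is either biregular or triangular with a common proper base point fixed by the biregular elements. The only differences are presentational — the paper phrases the descent as a minimization of the bound $M(S)$ over all smooth completions rather than an induction on $N$, and the ``rigidity'' step you defer (that under this single conjugation every element of length $\ge 2$ drops in length by exactly $2$ while elements of length $\le 1$ keep length $\le 1$) is precisely the case analysis the paper carries out using Corollaries \ref{cor:comp}, \ref{cor:concat} and \ref{cor:cancel}.
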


\begin{proof}
Let $(S',B_{S'})$ be a smooth completion of $V$. By Sumihiro equivariant completion theorem \cite{Su}, there exists a smooth projective surface $Z$  on which $G$ acts biregularly and a $G$-equivariant open embedding $V\hookrightarrow Z$. 
The induced birational map $\tau\colon Z\dashrightarrow S'$ has finitely many base points, including infinitely near ones and similarly for its inverse. It follows that the number of base points of an element $g$ of $G$ considered as a birational self-map $g\colon(S',B_{S'})\dashrightarrow (S',B_{S'})$ is bounded by the sum of the number of base points of $\tau$ and its inverse.  
This implies in turn that there exists a minimal integer $M(S') \ge 0$ such that the length of any such $g$ is at most $M(S')$ (in the sense
of Definition \ref{def:len}). The bound $M(S')$ depends on the particular completion we choose to realize the birational action of $G$.
Now we choose a smooth completion $(S, B_{S})$ of $V$ such that $M(S)$ is minimal.  
If $M(S)=0$ the birational self-map $g\colon(S,B_{S})\dashrightarrow(S,B_{S})$
associated to every element of $G$ is an automorphism of pairs and hence
$G\subset\Aut(S,B_{S})$.

Assume now that $M(S) \ge 1$, and let $g\colon(S,B_{S})\dashrightarrow(S,B_{S})$ be the birational self-map associated to an element of $G$ which realizes the bound $M(S)$, that is $g = \psi_M \circ \dots \circ \psi_1$ is a composition of $M = M(S)$ triangular maps. 
Let $h\colon(S,B_{S})\dashrightarrow(S,B_{S})$ be the birational self-map associated to another element of $G$.

Suppose first that $\ell(h) \ge 1$ and let $h = \phi_m \circ \dots \circ \phi_1$, where $m=\ell(h)$, be a minimal decomposition of $h$ into triangular maps. 
If $h^{-1}$ and $g$ were in general position then, by Proposition \ref{prop:main}  $\psi_M\cdots \psi_1\phi_1^{-1}\cdots \phi_m^{-1}$ would be a minimal decomposition of $g\circ h^{-1}$, and we would have $\ell(g\circ h^{-1}) = \ell(g) + \ell(h) > M(S)$ in contradiction with the definition of $M(S)$.
So $\bs(\phi_1) = \bs(h) = \bs(g)= \bs(\psi_1)$, and $\psi_1 \circ \phi_1^{-1}$ is either triangular or biregular.
If $\psi_1 \circ \phi_1^{-1}$ is triangular, then by Corollary \ref{cor:comp} the pairs of maps $(\psi_1 \circ \phi_1^{-1}),\psi_2$ and $\phi_2^{-1}, (\psi_1 \circ \phi_1^{-1})$ are both in general position, and  we have $\ell(g \circ h^{-1}) = M + m -1$, so $m = 1$.
This implies that if $\ell(h) \ge 2$ then $\psi_1 \circ \phi_1^{-1}$ is biregular, and applying the same reasoning to $h^{-1}$ instead of $h$ we also get that $\psi_1 \circ \phi_m$ is biregular in this case.
Finally observe that in the case $\ell(h) = 1$ we have $\ell(\psi_1 h \psi_1^{-1}) = \ell(\psi_1 h^{-1} \psi_1^{-1}) \le 1$: indeed either $\psi_1 \circ \phi_1^{-1}$ is biregular and this is clear, or $\psi_1 \circ \phi_1^{-1}$ is triangular with base point equal to  $\bs(\phi_1^{-1}) = \bs(\psi_1)$ and so $\psi_1^{-1}$ and $\psi_1 \circ \phi_1^{-1}$ are in special position.

Consider now the case $\ell(h) = 0$. 
We claim that $h$ fixes $\bs(g) = \bs(\psi_1)$: Otherwise $g^{-1}$ and $gh$  would be in general position and we would have $\ell(ghg^{-1})=2\ell(g)>M(S)$, a contradiction. 
It follows that $\psi_1 h\psi^{-1}_1$ is triangular or biregular in this case. 

In conclusion, conjugating the group $G$ by the birational map $\psi_1\colon(S,B_S)\dashrightarrow (S_1,B_{S_1})$, we obtain that $\ell(\psi_1 h \psi_1^{-1}) \le 1$ if $\ell(h)\leq 1$ whereas $\ell(\psi_1 h \psi_1^{-1}) = l(h) -2$ if $\ell(h) \ge 2$. 
So $M(S)=1$ for otherwise we would have $M(S_1)< M(S)$, in contradiction with the minimality of $M(S)$. 
This shows that all elements in $G$ extend to biregular or triangular maps from $S$ to itself. 
The argument above shows that the point $p = \bs(g)$ is fixed by all biregular elements of $G$ and is the proper base point of all triangular elements in $G$, that is, $G \subset \tr(S,B_{S},p)$.
\end{proof}

\subsection{Automorphisms of affine surfaces with smooth completions} 
Here we consider the particular case of affine surfaces $V$  admitting smooth completions $(S,B_S)$. 
By Proposition \ref{prop:models}, every such surface is isomorphic to $\mathbb{P}^2\smallsetminus C$ where $C$ is a line or a smooth conic or to the complement of an ample section $C$ in a Hirzebruch surface $\pi_n\colon\mathbb{F}_n\rightarrow \mathbb{P}^1$. 
As we saw before, the integer $d=B_S^2$ is an invariant of $V$ and in fact the only invariant except in the case $d=4$. 
Indeed, by the Danilov-Gizatullin Isomorphy Theorem \cite{GD2}, the isomorphy type as an abstract affine surface of the complement of an ample section $C$ in a Hirzebruch surface depends neither on the ambient projective surface nor on the choice of the section, but only on its self-intersection. 
The following proposition summarizes some of the properties of the automorphism groups of these surfaces.

\begin{prop} \label{prop:Affautos} For an affine surface $V_d$ admitting a smooth completion $(S,B_S)$ with $B_S^2=d>0$, the following holds :

1) If $d\leq 4$ then every automorphism of $V_d$ is generalized tame, and one has  $\Aut(V_d)=\mathcal M\jun(V_d)$ for a finite family $\mathcal M$ of completions. 
In particular, $\Aut(V_d)$ is generated by countably many algebraic subgroups. 

2) If $d\geq 5$ then $\GTA(V_d)$ is a proper normal subgroup of $\Aut(V_d)$ and it cannot be generated by countably many algebraic subgroups.

\end{prop}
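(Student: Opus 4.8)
The plan is to run every automorphism of $V_d$ through the normal--form machinery of Section~\ref{sec:3} and to read off the answer from the combinatorics of the smooth completions of $V_d$ and of the triangular maps between them; the whole point is that the dichotomy $d\le 4$ versus $d\ge 5$ is precisely the dichotomy between the case where this combinatorial data is finite and the case where it is not. The first observation is that, since $V_d$ is affine, it contains no complete curve and hence no $(-1)$-curve, so the reduction step in the proof of Proposition~\ref{prop:models} is vacuous: \emph{every} smooth completion $(S,B_S)$ of $V_d$ with irreducible boundary is already of the form $(\cpd,C)$ with $C$ a smooth conic (which forces $d=4$, or a line if $d=1$) or $(\F_n,C)$ with $C$ an ample section of a Hirzebruch surface $\F_n\to\cpo$ with $C^2=d$. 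Writing $C\sim C_0+mf$ one gets $2m-n=d$ and $C\cdot C_0=m-n>0$, so the index $n$ is bounded by $d-2$ and congruent to $d$ modulo $2$; hence there are only finitely many such completions up to isomorphism of pairs, and by the Danilov--Gizatullin Isomorphy Theorem they are completions of essentially one surface (of the two ``models'' if $d=4$). Call this finite list $\mathcal F_d$.

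For assertion~(1) assume $d\le 4$ and let $f\in\Aut(V_d)$; viewing $f$ as a strictly birational self--map of a fixed $(S,B_S)\in\mathcal F_d$ (if it is biregular there is nothing to prove), take a minimal decomposition $f=\phi_n\cdots\phi_1$ into triangular maps as in Proposition~\ref{prop:main}. By Lemma~\ref{lem:triangle} each intermediate pair again has self--intersection $d$, hence lies in $\mathcal F_d$. What then remains is a \emph{closure lemma}: for $d\le 4$ any triangular map between two members of $\mathcal F_d$ factors as $\alpha'\circ\phi_0\circ\alpha$ with $\alpha,\alpha'$ biregular automorphisms of the source and target pairs and $\phi_0$ one of finitely many explicit ``standard'' triangular maps, each of which — after conjugation by a fixed birational map of smooth completions — lies in some group $\tr(S_\alpha,B_{S_\alpha},p_\alpha)$. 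Substituting these factorizations into $f=\phi_n\cdots\phi_1$ and absorbing the composed pair--automorphisms into neighbouring factors (as in the proof of Corollary~\ref{cor:concat}) exhibits $f$ as a word in the groups $\Aut(S_\alpha,B_{S_\alpha})$ (contributing $\mathcal M$-affine generators) and $\tr(S_\alpha,B_{S_\alpha},p_\alpha)$ (contributing $\mathcal M$-Jonqui\`eres generators) attached to a \emph{finite} collection $\mathcal M$ of pairs (model, point); that is, $f\in\mathcal M\jun(V_d)$, so $\Aut(V_d)=\mathcal M\jun(V_d)$ and every automorphism is generalized tame. Since $\mathcal M$ is finite, each $\Aut(S_\alpha,B_{S_\alpha})$ is an algebraic group, and, by the description recalled just before Definition~\ref{def:tame}, each $\tr(S_\alpha,B_{S_\alpha},p_\alpha)$ is a countable increasing union of algebraic subgroups, it follows that $\Aut(V_d)$ is generated by countably many algebraic subgroups.

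For assertion~(2) assume $d\ge 5$. Normality of $\GTA(V_d)$ is free of charge: as noted after Definition~\ref{def:tame}, $\GTA(V_d)$ is exactly the normal subgroup of $\Aut(V_d)$ generated by the automorphisms preserving an $\mathbb{A}^1$-fibration of the form $\rho_p$. For properness I would exhibit an automorphism $f$ that is not generalized tame, by the following mechanism (made explicit on the surfaces of Section~\ref{sec:4}, see~\S\ref{expl:+5}): pick $f$ admitting a minimal decomposition $f=\phi_n\cdots\phi_1$ of length $n\ge 2$ in which consecutive triangular maps are in general position and whose sequence of intermediate models is, for $d\ge 5$, \emph{rigid}, in the sense that Lemma~\ref{lem:triangle} and Corollary~\ref{cor:comp} forbid shortening it or rerouting it through other members of $\mathcal F_d$. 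A generalized affine or generalized Jonqui\`eres automorphism has, after conjugation by a birational map of smooth completions, length $\le 1$; one then controls the minimal decomposition of an arbitrary product of such maps via Corollaries~\ref{cor:comp} and~\ref{cor:cancel}. For $d\le 4$ the available cancellations collapse any word down into such a product — that is exactly what makes~(1) work — but for $d\ge 5$ the rigidity of the model sequence of $f$ prevents any such product from being equal to $f$. Finally, that $\GTA(V_d)$ cannot be generated by countably many algebraic subgroups comes from the same bookkeeping together with Proposition~\ref{prop:algG}: an algebraic subgroup of $\Aut(V_d)$ is, after a model change, contained in some $\Aut(S',B_{S'})$ or $\tr(S',B_{S'},p')$, hence its elements have uniformly bounded length and it ``touches'' essentially one $\mathbb{A}^1$-fibration direction; a countable family of such subgroups therefore reaches only a countable set of directions, whereas for $d\ge 5$ the group $\GTA(V_d)$ genuinely involves an uncountable family of pairwise inequivalent ones — the concrete source of this uncountability being, once more, the examples of Section~\ref{sec:4}.

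The two genuine obstacles are thus the following. First, the closure lemma of~(1): its verification for $d=1,2,3,4$ is a finite but non--trivial case analysis on the few models of $\mathcal F_d$, and it essentially re--proves, in the present language, the amalgamated--product descriptions of $\Aut(V_d)$ due to Gizatullin and Danilov. Second, for~(2), the construction of an explicit non--tame automorphism for each $d\ge 5$, together with an invariant — a length--type growth, or an action on the graph of models of $V_d$ — certifying simultaneously its non--tameness and the failure of countable generation of $\GTA(V_d)$. Both of these require descending to the surface--by--surface analysis of Section~\ref{sec:4}, and it is there rather than in the general machinery of Sections~\ref{sec:1} and~\ref{sec:3} that the real work lies.
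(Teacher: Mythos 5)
Your treatment of assertion (1) matches the paper's: both defer the real content to the finite case-by-case analysis of the models in Section \ref{sec:4}, and the countable-generation statement then follows from the structure of $\Aut(S_\alpha,B_{S_\alpha})$ and $\tr(S_\alpha,B_{S_\alpha},p_\alpha)$ recalled in \S\ref{sub:tame}. The gap is in assertion (2), where your proposed mechanism for properness is not the one that works. You look for a non-tame $f$ of length $n\ge 2$ whose ``sequence of intermediate models is rigid,'' and you locate the obstruction in the impossibility of rerouting that model sequence. But the paper's non-tame automorphism has length $1$: it is a single triangular self-map $\theta$ of $(S,B_S)$. The correct invariant is not the length or the model sequence but the pair of proper base points: Lemma \ref{TamL1Carac} shows that a triangular self-map is generalized tame if and only if $\bs(\theta)$ and $\bs(\theta^{-1})$ lie in the \emph{same orbit} of the $\Aut(S,B_S)$-action on $B_S$. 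The decisive geometric input, which your proposal never identifies, is the dichotomy in how $\Aut(S,B_S)$ acts on $B_S$: for $d\le 4$ every model has an action with a dense orbit (this is what makes your ``closure lemma'' true), whereas for $d\ge 5$ one can choose a completion --- $(\F_1,C)$ or $(\F_2,C)$ with $C\cap C_0$ consisting of at least three points for $d\ge 7$, and the explicit models of \S\ref{expl:+5-models} and \S\ref{expl:+6} for $d=5,6$ --- on which all orbits are \emph{finite}. One then needs the refined Danilov--Gizatullin theorem to produce a triangular self-map joining two prescribed general points $p,p'$ of $B_S$; choosing $p,p'$ in distinct orbits gives the non-tame element. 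Without the orbit criterion and the finite-orbit completion, your ``rigidity'' argument has no way to certify non-tameness: an automorphism of length $\ge 2$ can perfectly well be tame, and products of conjugated Jonqui\`eres maps can realize arbitrary model sequences.

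The same omission weakens your argument for the failure of countable generation. You correctly reduce, via Proposition \ref{prop:algG}, each algebraic subgroup $G_i$ to a conjugate of some $\Aut(S_i,B_{S_i})$ or $\tr(S_i,B_{S_i},p_i)$, and you gesture at ``counting directions''; but to conclude that the subgroup generated by countably many $G_i$ misses some element of $\GTA(V_d)$ you must (i) use Corollary \ref{cor:cancel} and Proposition \ref{prop:main} to confine the proper base points of \emph{arbitrary words} in the $G_i$ to an explicit countable subset $\mathcal C\subset B_S$, and (ii) know that the $\Aut(S,B_S)$-orbit of $\mathcal C$ is still countable --- which again requires the finite-orbit completion. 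For $d\le 4$ step (ii) fails (a single point already sweeps out an open orbit), which is exactly why the dichotomy sits at $d=5$; attributing it instead to rigidity of model sequences misses the actual mechanism.
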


\begin{proof} The fact that for every $d\leq 4$, $\Aut(V_d)=\M\jun(V_d)$ for a natural choice of finitely many smooth completions $\M$ is checked in the examples in Section \ref{sec:4}.
The second part of assertion 1) then follows from the fact groups of the form $\Aut(S,B_S)$ and $\tr(S,B_S,p)$ are generated by countable families of algebraic subgroups (see \S \ref{sub:tame}).

For the second assertion, we first need to prove that if $d\geq 5$ then there exist smooth completions $(S,B_S)$ of $V_d$ with the property that the orbits of the induced action of  $\Aut(S,B_S)$ on $B_S$ are finite. Namely, if $d > 7$ is even then $V_d$ admits a smooth completion $(\mathbb{F}_2,C)$ where $C$ is a section of $\pi_2\colon \mathbb{F}_2\rightarrow \mathbb{P}^1$ with self-intersection $d$ intersecting the exceptional section $C_0$ of $\pi_2$ with self-intersection $-2$ in $(d-2)/2\geq 3$ distinct points. Similarly, if $d\geq 7$ is odd, then there exists a smooth completion $(\mathbb{F}_1,C)$ of $V$ where $C$ is a section of $\pi_1:\mathbb{F}_1\rightarrow \mathbb{P}^{1}$ with self-intersection $d$ intersecting the exceptional section $C_0$ of $\pi_1$ with self-intersection $-1$ in $(d-1)/2\geq 3$ distinct points. Since in each case the set $C\cap C_0$ is necessarily globally preserved by the induced action of the automorphism group of the ambient pair on $C\simeq \mathbb{P}^1$, we conclude that the orbits of this group on $C$ are finite. The remaining two cases $d=5,6$ are treated explicitly in the examples in Section \ref{sec:4}, \S  \ref{expl:+5-models} 1-b) and \S  \ref{expl:+6} respectively. 

From now on, we identify $V=V_{d}$, $d\geq 5$, to $S\setminus B_{S}$ for a fixed smooth completion $(S,B_{S})$ with the property that the orbits of the induced $\Aut(S,B_S)$-action on $B_S$ are finite.

Let us show first that $\GTA(V)$ cannot be generated by a countable family $(G_{i})_{i\in\mathbb{N}}$ of algebraic subgroups. 
By Proposition \ref{prop:algG}, to each $G_i$ is associated a smooth completion $(S_{i},B_{S_{i}})$
of $V$ and a birational map of pairs $\psi_{i}\colon(S,B_{S})\dashrightarrow(S_{i},B_{S_{i}})$,
such that $\psi_{i}G_{i}\psi_{i}^{-1}$ is contained
either in $\Aut(S_{i},B_{S_{i}})$ or in $\mathrm{Tr}(S_{i},B_{S_{i}},p_{i})$
for some point $p_{i}\in B_{S_{i}}$. 
For every $i\in\mathbb{N}$,
we fix a minimal decomposition $\psi_{i}=\phi_{i,n_{i}}\dots\phi_{i,1}\colon(S,B_{S})=(S_{i,0},B_{S_{i,0}})\dashrightarrow(S_{i,n_{i}},B_{S_{i,n_{i}}})=(S_{i},B_{S_{i}})$ of $\psi_{i}$ into triangular maps $\phi_{i,k}\colon(S_{i,k-1},B_{S_{i,k-1}})$ $\dashrightarrow(S_{i,k},B_{S_{i,k}})$.
Let $q_{ij}=\mathcal{B}(\phi_{i,j+1})$ and $r_{ij}=\mathcal{B}(\phi_{i,j}^{-1})$: Observe that $q_{ij}$ and $r_{ij}$ are both points in $B_{S_{i,j}}$.
Then define $\mathcal{C}$ as the subset of $B_S$ consisting of points of the form $\alpha_{i,j}^{-1}(q_{ij})$, $\beta_{i,j}^{-1}(r_{i,j})$ and $\gamma_i^{-1}(p_i)$ for all possible isomorphisms of pairs $\alpha_{i,j}\colon(S,B_S)\rightarrow (S_{i,j},B_{S_{i,j}})$, $\beta_{i,j}\colon(S,B_S)\rightarrow (S_{i,j},B_{S_{i,j}})$ and $\gamma_i\colon(S,B_S)\rightarrow (S_i,B_{S_i})$ respectively. 

It then follows from Corollary \ref{cor:cancel} that the set of possible proper base points of elements of $\Aut(V)$ considered as birational self-maps of $(S,B_S)$ is contained in $\mathcal{C}$.
Note that Proposition \ref{prop:main} b) implies that $\mathcal{C}$ does not depend on the choice of the minimal decompositions of the birational maps $\psi_i$. 
Our choice of $(S,B_S)$ implies that the $\Aut(S,B_S)$-orbit of $\mathcal{C}$ is countable (in fact one could show that $\mathcal C$ is already $\Aut(S,B_S)$-invariant, but this is not necessary for the argument). 
So for every point $p$ in its complement, a strictly birational element in $\tr(S,B_S,p)$ is an element of $\GTA(V)$ which does not belong to the subgroup generated by the $G_i$.

Next, to derive that $\GTA(V)$ is a proper subgroup of $\Aut(V)$, we exploit a more precise version of the Danilov-Gizatullin Isomorphy Theorem (see \cite[\S 3.1]{DubFin}, in particular Lemma 3.2 and the discussion just before) which asserts that if $(S',B_{S'})$ and $(S'',B_{S''})$ are  smooth completions of $V$ and $p'\in B_{S'}$, $p''\in B_{S''}$ are general points, then the  $\mathbb{A}^1$-fibered affine surfaces
 $\rho'\mid_{V'} \colon V'=S'\smallsetminus B_{S'}\rightarrow \mathbb{A}^1$ and $\rho''\mid_{V''}\colon V''=S''\smallsetminus B_{S''}\rightarrow \mathbb{A}^1$ are isomorphic.
More precisely, this holds whenever $\rho'$ and $\rho''$ have reduced fibers, a property which is always satisfied for general $p'$ and $p''$. 
In view of Lemma \ref{lem:TrCarac}, this implies in particular that for every pair of general points $p$ and $p'$ in $B_S$, there exists a triangular map $\theta\colon(S,B_S)\dashrightarrow (S,B_{S})$ with $\bs(\theta)=p$ and $\bs(\theta^{-1})=p'$. 
By virtue of Lemma \ref{TamL1Carac} below, such a map $\theta$ corresponds to an element of $\GTA(V)$ if and only if $\mathcal{B}(\theta)$ and $\mathcal{B}(\theta^{-1})$ belong to a same orbit of the induced action of $\Aut(S,B_S)$ on $B_S$. Since $\Aut(S,B_S)$ acts on $B_S$ with finite orbits, it follows that we can find two general points $p,p'$ of $B_S$ in distinct orbits; a corresponding triangular self-map $\theta$ then induces an element of $\Aut(V)\setminus \GTA(V)$.
\end{proof}

In the proof of the previous theorem, we used the following characterization of generalized tame automorphisms of length $1$: 

\begin{lem} \label{TamL1Carac}
A triangular map $\theta\colon (S,B_{S})\dashrightarrow(S,B_{S})$ of smooth completions is a generalized tame automorphism of $S\setminus B_{S}$ if and only if $\bs(\theta)$
and $\bs(\theta^{-1})$ belong to a same orbit of the 
action of $\Aut(S,B_{S})$ on $B_{S}$. 
\end{lem}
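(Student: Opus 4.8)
\emph{The ``if'' direction.} Suppose $\bs(\theta)$ and $\bs(\theta^{-1})$ lie in a common orbit of the $\Aut(S,B_S)$-action on $B_S$, and choose $\alpha\in\Aut(S,B_S)$ with $\alpha(\bs(\theta^{-1}))=\bs(\theta)$. The composition $\alpha\circ\theta\colon(S,B_S)\dashrightarrow(S,B_S)$ is again strictly birational; since $\alpha$ is biregular, $\ell(\alpha\theta)=\ell(\theta)=1$, so $\alpha\theta$ is a triangular self-map, and $\bs(\alpha\theta)=\bs(\theta)$ while $\bs((\alpha\theta)^{-1})=\bs(\theta^{-1}\circ\alpha^{-1})=\alpha(\bs(\theta^{-1}))=\bs(\theta)$. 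Hence $\alpha\theta$ has equal proper base points, i.e. $\alpha\theta\in\tr(S,B_S,\bs(\theta))$ by definition. By Lemma \ref{lem:TrCarac} the map $\alpha\theta$ is a generalized Jonqui\`eres automorphism of $V$; as $\alpha^{-1}\in\Aut(S,B_S)$ is a generalized affine automorphism, $\theta=\alpha^{-1}\circ(\alpha\theta)$ belongs to $\GTA(V)$.

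\emph{The ``only if'' direction: reduction.} Conversely, assume $\theta\in\GTA(V)$. It suffices to establish the following more precise claim: a birational self-map $f$ of $(S,B_S)$ that lies in $\GTA(V)$ and has length $\le1$ is either an automorphism of the pair $(S,B_S)$, or is of the form $\beta\circ t$ with $\beta\in\Aut(S,B_S)$ and $t\in\tr(S,B_S,p)$ for some $p\in B_S$. Indeed, applying this to $\theta$ itself, which is triangular and hence of length exactly one, produces a factorization $\theta=\beta t$ in which $t$ is necessarily a genuine triangular map (otherwise $\theta$ would be biregular), so that $\bs(\theta)=\bs(t)=p$ while $\bs(\theta^{-1})=\bs(t^{-1}\circ\beta^{-1})=\beta(\bs(t^{-1}))=\beta(p)$, and $\bs(\theta)$, $\bs(\theta^{-1})$ lie in a common $\Aut(S,B_S)$-orbit.

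\emph{Proof strategy for the claim.} Since every generalized affine automorphism is also generalized Jonqui\`eres, write $f=h_m\circ\cdots\circ h_1$ with each $h_i=\psi_i^{-1}t_i\psi_i$ for a birational map of smooth completions $\psi_i\colon(S,B_S)\dashrightarrow(S_i,B_{S_i})$ and an element $t_i\in\tr(S_i,B_{S_i},p_i)$, and proceed by induction on $m$. Expanding each $\psi_i$ into a minimal composition of triangular maps turns $f$ into a (generally non-minimal) composition of triangular maps, and one tracks, one elementary factor at a time, how the pair of proper base points $(\bs(\cdot),\bs(\cdot^{-1}))$ and the length evolve. The two moves are: a conjugation or composition in \emph{special position}, which by Corollary \ref{cor:comp} merges two triangular factors without increasing the length and keeps the two proper base points in a common $\Aut(S,B_S)$-orbit; and one in \emph{general position}, which by Proposition \ref{prop:main} raises the length by two while still keeping the two proper base points equal. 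Corollary \ref{cor:cancel} shows in addition that whenever the length of a sub-product fails to register one of its triangular factors, an initial segment of that product is an isomorphism of pairs, which can be absorbed into the conjugating maps and lowers $m$. The hypothesis $\ell(f)\le1$ forbids any net length increase, so every step must be in special position; the telescoping then collapses $f$ to the asserted normal form $\beta t$ (with $\beta=\mathrm{id}$ in fact), and the induction closes.

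\emph{The main obstacle.} The genuinely delicate point is the bookkeeping in the last paragraph. Because conjugating a triangular map by another triangular map may \emph{decrease} the length (by two) just as well as increase it, one cannot crudely bound $m$ in terms of $\ell(f)$; one must verify that every length drop occurs through a special-position cancellation of the type governed by Corollaries \ref{cor:comp} and \ref{cor:cancel}, and that such cancellations are exactly the ones preserving the property ``both proper base points lie in one orbit of $\Aut(S,B_S)$''. Making this tracking precise — in particular checking that the common base point of the accumulated inner map always transforms controllably under each $\sigma_{i,j}$ and under the $h_i$'s — is where the real work lies; once it is in place, both the base case $m=1$ and the inductive step follow formally.
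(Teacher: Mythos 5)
Your ``if'' direction is complete and is exactly the paper's argument: choosing $\alpha\in\Aut(S,B_S)$ with $\alpha(\bs(\theta^{-1}))=\bs(\theta)$ makes $\alpha\theta$ an element of $\tr(S,B_S,\bs(\theta))$, hence $\theta$ is generalized tame.

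The ``only if'' direction, however, has a genuine gap: after a correct reduction, you only describe a \emph{strategy} and then explicitly defer the substance (``making this tracking precise \dots is where the real work lies''). That tracking \emph{is} the lemma. Concretely, to run the telescoping you need three facts that do not come for free: (a) that no sub-product of the chosen factorization of $\alpha\theta$ into conjugated generators $g_i^{\psi_i}$ is an isomorphism of pairs; (b) that each factor satisfies $\bs(g_i^{\psi_i})=\bs((g_i^{\psi_i})^{-1})$; and (c) that Corollary \ref{cor:cancel} can legitimately be applied, which requires excluding the configuration where some $g_{j_0}^{\psi_{j_0}}$ splits as $fh$ with $f,h$ in general position and $h\prod_{i<j_0}g_i^{\psi_i}$ an isomorphism. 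The paper obtains all three by a double minimality argument -- first minimizing $L=\sum(\ell(\psi_i)+\ell(g_i)+\ell(\psi_i^{-1}))$ over all factorizations \emph{and all choices of} $\alpha$, then minimizing the number of factors -- and point (c) in particular needs a separate conjugation trick to derive a contradiction with the minimality of $L$. Your sketch assumes the length bookkeeping will ``forbid any net length increase'' and that every length drop is a controlled special-position cancellation, but without the minimality setup a sub-product could silently collapse to an isomorphism, or a factor could split in general position, and the base-point chain would break. You also invoke undefined maps $\sigma_{i,j}$, and the parenthetical ``with $\beta=\mathrm{id}$ in fact'' is false: a tame triangular $\theta$ need not satisfy $\bs(\theta)=\bs(\theta^{-1})$ (compose any $t\in\tr(S,B_S,p)$ with an automorphism moving $p$), so in general $\beta\neq\mathrm{id}$ and only the orbit statement survives.
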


\begin{proof}
Clearly if there exists an automorphism $\alpha \in \Aut(S,B_{S})$ such that $p = \bs(\theta) = \alpha \bs(\theta^{-1})$, then $\alpha \theta \in \mathrm{Tr}(S,B_S,p)$, hence $\theta$ is generalized tame.

Conversely, let $\theta$ be a generalized tame triangular map. For any automorphism $\alpha \in \Aut(S,B_{S})$, $\alpha \theta$ is again generalized tame and triangular hence can be written in the form
$$\alpha \theta=\prod_{i=1}^{n}g_{i}^{\psi_{i}} = \psi_{n}g_{n}\psi_{n}^{-1} \dots \psi_{1}g_{1}\psi_{1}^{-1}$$ 
where for every $i=1,\dots,n$, $\psi_{i} \colon (S_{i},B_{S_{i}}) \dashrightarrow (S,B_{S})$ is a birational map of smooth completions, and  $g_{i}\in\Aut(S_{i},B_{S_{i}})\cup\bigcup_{p_{i}\in B_{S_{i}}}\mathrm{Tr}(S_{i},B_{S_{i}},p_{i})$. 
Among all such factorizations (for all choices of $\alpha$), we choose one with the property that $L=\sum_{i=1}^{n} (\ell(\psi_{i}) + \ell(g_{i}) + \ell(\psi_{i}^{-1})) $
is minimal, which implies that for all $i$ we have $\ell(g_{i}^{\psi_{i}}) = 2\ell(\psi_i) + \ell(g_i)$.
Furthermore among all factorizations realizing this minimum $L$, we pick one with the minimal number $n$ of factors.

Now if a composition of the form $\beta = \prod_{i=i_{0}}^{j_{0}}g_{i}^{\psi_{i}}$,
where $1\leq i_{0}\leq j_{0}\leq n$, were an isomorphism, then for all $k > j_0$ we could write  
$$ g_{k}^{\psi_k}\beta = \beta \beta^{-1}  g_{k}^{\psi_k}\beta
= \beta g_{k}^{\beta^{-1} \psi_k}  $$
So we could shift the $\beta$ all the way to the left, without increasing $L$ since $\ell(\psi_k) = \ell(\beta^{-1} \psi_k)$, and this would contradict the minimality of $n$.
Thus we can assume that no composition of the form $\prod_{i=i_{0}}^{j_{0}}g_{i}^{\psi_{i}}$ is an isomorphism, or in other words that $\ell(\prod_{i=i_{0}}^{j_{0}}g_{i}^{\psi_{i}}) \ge 1$.
Taking $i_0 = j_0$ we obtain in particular that no $g_i^{\psi_i}$ is an isomorphism.  
We deduce from Corollary \ref{cor:concat} that $\bs(g_{i}^{\psi_{i}}) = \bs(\psi_i^{-1})$ if ${\psi_{i}}$ is not an isomorphism, and $\bs(g_{i}^{\psi_{i}}) = \psi_i(\bs(g_i))$ otherwise.
We also observe that in both cases $\bs(g_{i}^{\psi_{i}}) = \bs ((g_{i}^{\psi_{i}})^{-1})$.

Now we check that we are in position to apply Corollary \ref{cor:cancel}.
Since we already know that no composition of the form $\prod_{i=i_{0}}^{j_{0}}g_{i}^{\psi_{i}}$ is an isomorphism, and since $\ell(g_{i}^{\psi_{i}}) = 2\ell(\psi_i) + \ell(g_i)$, it remains to exclude the existence of two indexes $j_0 > i_0$ such that $g_{j_0}^{\psi_{j_0}} = fh$ with $f, h$ in general position and $\beta = h \prod_{i=i_0}^{j_0-1} g_i^{\psi_i}$ an isomorphism. 
In such a case we would have
$$\prod_{i = i_0}^ {j_0} g_i^{\psi_i} = f\beta = h^{-1}\beta\beta^{-1}hf\beta = \left(\prod_{i=i_0}^{j_0-1} g_i^{\psi_i}\right) \beta^{-1}hf\beta.$$
But on one hand $\beta^{-1}hf\beta = (\beta^{-1}h) fh (h^{-1}\beta) = \left(\prod_{i=i_0}^{j_0-1} g_i^{\psi_i}\right)^{-1} g_{j_0}^{\psi_{j_0}} \left(\prod_{i=i_0}^{j_0-1} g_i^{\psi_i}\right)$ is a conjugate of $g_{j_0}$, and on the other hand $h,f$ are in special position. 
So $\ell (\beta^{-1}hf\beta) = \ell(hf) < \ell(fh)$, in contradiction with the minimality of $L$.

Therefore,  it follows from Corollary \ref{cor:cancel} and the observation made on the proper base point of $g_i^{\psi_i}$ that for all $i_0 = 2,\dots, n$:
$$\bs \left(g_{i_0}^{\psi_{i_0}}\right)
=\bs \left(\prod_{i=i_0}^{n}g_{i}^{\psi_{i}}\right)  
\quad \text{ and } \quad
\bs \left( \left(\prod_{i=1}^{i_0-1}g_{i}^{\psi_{i}}\right)^{-1}\right) 
= \bs \left(g_{i_0-1}^{\psi_{i_0-1}}\right).$$ 

Furthermore if  $\prod_{i=1}^{i_{0}-1}g_{i}^{\psi_{i}}$ and $\prod_{i=i_{0}}^{n}g_{i}^{\psi_{i}}$  were in general position for some index $i_0$, then by Corollary \ref{cor:concat} we would have 
$$\ell \left(\prod_{i=1}^{n}g_{i}^{\psi_{i}}\right) = \ell \left(\prod_{i=i_{0}+1}^{n}g_{i}^{\psi_{i}}\right) + \ell \left(\prod_{i=1}^{i_{0}}g_{i}^{\psi_{i}}\right)  \ge 2,$$ 
in contradiction with the fact that $\alpha \theta$ is triangular. So all such compositions are in special position, and we obtain $\bs \left(g_{i_0}^{\psi_{i_0}}\right) = \bs \left(g_{i_0-1}^{\psi_{i_0-1}}\right).$
Finally $\bs \left(g_n^{\psi_n}\right) = \bs \left(g_1^{\psi_1}\right)$, and  we have
$$\bs(\theta) = \bs (g_1^{\psi_1}) = \bs (g_n^{\psi_n}) = \bs (\theta^{-1}\alpha^{-1}) = \alpha \bs(\theta^{-1}). \qedhere $$ 
\end{proof}

\section{Examples}\label{sec:4}

Here we illustrate our algorithm by describing the automorphism groups of affine varieties admitting a smooth completion with boundary $B_S$ of self-intersection at most $6$. 
We first check that we recover the well-known structure of the automorphism group of $\mathbb{A}^2$. 
Then we consider the case of the affine quadric surface $\CP^1\times \CP^1\smallsetminus \Delta$ started in Section \ref{sec:2} for which we recover in particular the description of its automorphism group given for instance in \cite{Lamyquad}. 
We briefly discuss the case of the complement of a smooth conic in $\mathbb{P}^2$ which is similar. 
As a next step, we describe the situation for affine surfaces admitting smooth completions with boundaries of self-intersection $3$ and $4$ for which two new phenomena arise successively: the existence of non isomorphic $\mathbb{A}^1$-fibrations associated to rational pencils $\mathcal{P}_p$ with proper base point on the boundary, and the existence of non isomorphic smooth completions of a given affine surface. 
We observe that all these examples share the common property that their automorphisms are tame. 
Finally, we consider the more subtle situations of affine surfaces admitting  smooth completions with boundary of self-intersection $5$ and $6$ for which we establish the existence of non-tame automorphisms.

\subsection{Automorphisms of $\mathbb{A}^2$, $\mathbb{P}^2\smallsetminus \{\textrm {smooth conic}\}$ and of the smooth affine quadric surface}

\subsubsection{The affine plane} \label{expl:c2}

Here we derive Jung's Theorem from the description of the triangular maps  which appear in the factorization of an automorphism of $\mathbb{A}^2$ (see also \cite{LamyJung} and \cite{Mat}, which contain proofs of Jung's Theorem derived from the philosophy of the (log) Sarkisov program). 
We let 
$$\mathbb{A}^2=\mathrm{Spec}(\mathbb{C}[u,v])=\mathbb{P}^2\smallsetminus L_0=\mathrm{Proj}(\mathbb{C}[x,y,z])\smallsetminus \{z=0\}$$ with $(u,v)=(x/z,y/z)$, and we define the affine and Jonqui\`eres automorphisms with respect to this unique completion, with the choice of $p_\infty = [1:0:0]$ (see Definition \ref{def:tame}). 
The restriction to $\mathbb{A}^2$ of the rational pencil $\rho\colon\mathbb{P}^2\dashrightarrow \mathbb{P}^{1}$ generated by $L_0$ and the line ${y=0}$ coincides with the second projection $\mathrm{pr}_v\colon \mathbb{A}^2\rightarrow \mathbb{A}^1$. 
Since the pairs $(\mathbb{P}^2,L)$ where $L$ is a line are the only smooth completions of $\mathbb{A}^2$, our algorithm leads to a factorization of an arbitrary polynomial automorphism $f$ of $\mathbb{A}^2$ into a finite sequence of triangular birational maps $\phi_i\colon (\mathbb{P}^2,L_{i-1})\dashrightarrow (\mathbb{P}^2,L_{i})$, where the $L_i$ are lines. 
These maps are obtained as sequences of $2k_i-2$, $k_i\geq 2$, elementary links as in Theorem \ref{th:main} of the form 
 $$ (\cpd,L_{i-1}) \dashrightarrow (\cpd(2),B_1) \dashrightarrow \dots \dashrightarrow (\cpd(k_i),B_{k_i-1}) \dashrightarrow \dots \dashrightarrow (\cpd(2),B_{2k_i-3}) \dashrightarrow (\cpd,L_{i})$$
where  $\cpd(d)$ is the weighted projective plane $\cpd(d,1,1)$, obtained from the Hirzebruch surface $\pi_d\colon\mathbb{F}_d\rightarrow \mathbb{P}^1$ by contracting the section with self-intersection $-d$, and each intermediate boundary is the image by the contraction of a fiber of $\pi_d$. 
Since $\Aut(\mathbb{P}^2)$ acts transitively on pairs consisting of a line and a point of it, we may associate to each $\phi_i$ two isomorphisms of pairs 
$\alpha_i \colon(\mathbb{P}^2,L_0)\rightarrow (\mathbb{P}^2,L_{i-1})$ and
$\beta_i \colon(\mathbb{P}^2,L_0)\rightarrow (\mathbb{P}^2,L_i)$, which map $p_\infty=[1:0:0]$ respectively onto the proper base points of $\phi_i$ and $\phi_i^{-1}$.
Then the induced birational map $\beta_i^{-1}\phi_i\alpha_{i}$ restricts on $\mathbb{A}^2$ to an automorphism commuting with the second projection. 
Thus each $\beta_i^{-1}\phi_i\alpha_{i}$ is a triangular automorphism of $\mathbb{A}^2$ in the usual sense (but in this paper, we prefer to use the terminology ``Jonqui\`eres'', since we reserve ``triangular'' for a more general notion). 
Thus every automorphism of $\mathbb{A}^2$ is $\mathcal M$-tame with respect to the family $\M=\{(\mathrm{id}_{\mathbb{P}^2},p_\infty)\}$ and via Proposition \ref{prop:main}, we recover the classical description of the automorphism group of $\mathbb{A}^2$ as the free product of its affine and Jonqui\`eres subgroups amalgamated along their intersection. 
We also recover via Proposition \ref{prop:algG} another classical fact: any algebraic subgroup of $\Aut(\mathbb{A}^2)$ is conjugated to a subgroup of affine or Jonqui\`eres automorphisms.

\subsubsection{The smooth affine quadric surface} \label{ex:+2}
The structure of the automorphism group of the smooth affine quadric surface $Q=\mathrm{Spec}(\mathbb{C}[x,y,z]/(xz-y(y+1))$ is similar to that of the affine plane. Via the open embedding 
$$Q\hookrightarrow  \mathbb{F}_0=\mathbb{P}^1\times \mathbb{P}^1=\mathrm{Proj}(\mathbb{C}[u_0:u_1])\times\mathrm{Proj}(\mathbb{C}[v_0:v_1]),\;(x,y,z)\mapsto ([x:y],[x:y+1])=([y+1:z],[y:z]),$$ we identify $Q$ with the complement of the diagonal $D_0=\{u_0v_1-u_1v_0=0\}$. 
The rational pencil on $\mathbb{F}_0$ generated by $D_0$ and the union of the two rules $\{u_1=0\}\cup\{v_1=0\}$ has a unique proper base point $p=([1:0],[1:0])$ and the restriction to $Q$ of the corresponding rational map $\rho\colon\mathbb{F}_0\dashrightarrow \mathbb{P}^1$ coincides with the $\mathbb{A}^1$-fibration $\mathrm{pr_z}\colon S\rightarrow \mathbb{A}^1$. 
The minimal resolution $X\rightarrow \mathbb{F}_0$ is obtained by blowing-up two times the point $p$ with successive exceptional divisors $F$ and $C_0$. 
The surface $X$ then dominates the Hirzebruch surface $\pi_1\colon\mathbb{F}_1\rightarrow \mathbb{P}^1$ via the contraction of the strict transforms of $\{u_1=0\}$ and $\{v_1=0\}$. 
Since $\mathrm{Pic}(Q)\simeq\mathbb{Z}$, it follows from Proposition \ref{prop:models} that the pairs $(\mathbb{F}_0,D)$ where $D$ is a smooth curve of type $(1,1)$ are the only possible smooth completions of $Q$. 
Proposition \ref{prop:main} and the description of the resolution of triangular maps given in \S \ref{Tr-disc} lead to a decomposition of every automorphism of $Q$ into a sequence of triangular maps 
$$\phi_i\colon(\mathbb{F}_0,D_{i-1})\dashrightarrow (\hat{\mathbb{P}}^2(2),B_1) \dashrightarrow \dots \dashrightarrow (\hat{\mathbb{P}}^2(k_i),B_{k_i-1})\dashrightarrow \dots \dashrightarrow(\hat{\mathbb{P}}^2(2),B_{2k_i-3})\dashrightarrow (\mathbb{F}_0,D_i)$$ 
where the projective surface $\hat{\mathbb{P}}^2(d)$ is  obtained  from the Hirzebruch surface  $\pi_d\colon\F_{d}\rightarrow \mathbb{P}^1$, with negative section  $C_0$,  by first blowing-up two distinct points in a fiber $F\smallsetminus C_0$  of  $\F_{d}\rightarrow\cpo$ and then contracting  successively the strict transforms of $F$ and $C_0$. 
The intermediate boundaries are the images by these contraction of a fiber $F'$ of $\mathbb{F}_d$ distinct from $F$. Remark that $\hat{\mathbb{P}}^2(d)$  dominates the weighted projective plane $\cpd(d)$ via a single divisorial contraction, hence the  notation.

Since $\Aut(\mathbb{F}_0)$ acts transitively on the set of pairs consisting of a smooth curve of type $(1,1)$ and a point on it, we may associate to each $\phi_i$ two isomorphisms of pairs 
$\alpha_i \colon(\mathbb{F}_0,D_0)\rightarrow (\mathbb{F}_0,D_{i-1})$ and
$\beta_i \colon(\mathbb{F}_0,D_0)\rightarrow (\mathbb{F}_0,D_i)$, which map $p_\infty=([1:0],[1:0])$ respectively onto the proper base points of $\phi_i$ and $\phi_i^{-1}$.
Thus the induced birational map $\beta_i^{-1}\phi_i\alpha_{i}$ restricts on $Q$ to an automorphism commuting with the $\mathbb{A}^1$-fibration $\mathrm{pr}_z$.
Such automorphisms come as the lifts via the morphism ${\rm p}_{z,y}:Q\rightarrow \mathbb{A}^2$ of Jonqui\`eres automorphisms of $\mathbb{A}^2$ of the form $(z,y)\mapsto (az,y+zP(z))$, or the form $(z,y)\mapsto (az,-(y+1)+zP(z))$ where $a\in \C^*$ and $P(z)$ is a polynomial. Every automorphism of the second family  is obtained from one of the first family by composing with the affine involution $(z,y)\mapsto (z,-(y+1))$ of $\mathbb{A}^2$ which lifts to the involution $Q$ induced by the "symmetry" with respect to the diagonal $D_0$ in $\mathbb{P}^1\times \mathbb{P}^1$. We recover in this way the presentation given in \cite{Lamyquad} of $\Aut(V)$ as the amalgamated product of $\mathrm{Aff}(Q)=\Aut(\mathbb{F}_0,\Delta)\mid_Q$ and $\Aut(Q,\mathrm{pr}_z)=\tr(\mathbb{F}_0,\Delta,p_\infty)\mid_Q$ over their intersection. 
In particular, similarly as in the case of $\mathbb{A}^2$, we have $\Aut(Q)=\M\jun(Q)$, with respect to the family $\M=\{(\mathrm{id}_{\mathbb{F}_0},p_\infty)\}$. 
Proposition \ref{prop:algG} says in turn that every algebraic subgroup of $\Aut(V)$ is conjugated to a subgroup of $\mathrm{Aff}(Q)$ or $\Aut(Q,\mathrm{pr}_z)$. For instance, every algebraic action of the additive group $\mathbb{G}_a$ on $Q$ is conjugated to an action preserving the fibration $\mathrm{pr}_z$.

\subsubsection{The complement of a smooth conic} \label{expl:p2c} 

Since the automorphism group of $\mathbb{P}^2$ acts transitively on the set of pairs consisting of a smooth conic and a point of it, every smooth pair $(\mathbb{P}^2,C)$ where $C$ is a smooth conic is isomorphic to $(\mathbb{P}^2,C_0)$ where $C_0=\{yz-x^2=0\}$, and every rational pencil $\mathcal{P}_p$ associated to a point on $C$ is conjugated to $\mathcal{P}_{p_0}$ generated by $C_0$ and $2L_0$ where $L_0$ denotes the tangent to $C_0$ at the point $p_0=[0:0:1]$. 
The corresponding $\mathbb{A}^1$-fibration $q=\rho_{p_0}\mid_V:V=\mathbb{P}^2\smallsetminus C_0\rightarrow \mathbb{A}^1$ has a unique degenerate fiber $L_0\cap V$, of multiplicity $2$. The automorphism group of $V$ is then the amalgamated product of $\mathrm{Aff}(V)=\Aut(\mathbb{P}^2,C_0)\mid_V$ and $\Aut(V,q)=\tr(\mathbb{P}^2,C_0,p_0)\mid_V$ over their intersection. 
Again, we have $\Aut(V)=\M\jun(V)$, for $\mathcal M$ a collection consisting of a unique model.   
The interested reader can find more details in \cite[\S 1.1]{HDR}.

\subsection{Complement of a section with self-intersection $3$ in $\mathbb{F}_1$}\label{expl:+3}

By Proposition \ref{prop:models}, a smooth completion of $(S,B_S)$ of an affine surface $V$ with $B_S^2 \neq 1,4$ is of the form $(\mathbb{F}_n,C)$ where $C$ is an ample section. 
Since $B_S^2$ is an invariant of $V$, we see that if $B_S^2=3$ the only smooth completions of $V$ are pairs $(\mathbb{F}_1,C)$ where $C$ is a section of self-intersection $3$. 
If we identify  $\pi_1\colon \mathbb{F}_1\rightarrow \mathbb{P}^1$ with the blow-up $\sigma$ of $\mathbb{P}^2=\mathrm{Proj}(\mathbb{C}[x,y,z])$ at the point $q=[0:1:0]$ with exceptional divisor $C_0$ and denote by $F_\infty$ the strict transform of the line $\{z=0\}\subset \mathbb{P}^2$, then every such section $C$ is the strict transform of a smooth conic in $\mathbb{P}^2$ passing through $q$. 
The automorphism group of $\mathbb{F}_1$ acts transitively on such sections and so, every smooth completion of $V$ is isomorphic to $(\mathbb{F}_1,D_0)$ where $D_0$ denotes the strict transform of the conic $\{yz-x^2=0\}\subset \mathbb{P}^2$  tangent to the line $\{z=0\}$ at $q$. 
The automorphism group of the pair $(\mathbb{F}_1,D_0)$ acts on $D_0$ with two orbits: the point $p_\infty=D_0\cap C_0=D_0\cap F_\infty$ and its complement. This implies in turn that every rational pencil $\mathcal{P}_{p}$ associated to a point on $D_0$ is conjugated either to $\mathcal{P}_{p_\infty}$ or to $\mathcal{P}_{p_0}$ where $p_0=\sigma^{-1}([0:0:1])$. 
Both of these pencils have a unique singular member consisting of the divisor $C_0+2F_\infty$ in the first case and $L+F_0$ where $L$ and $F_0$ are the respective strict transforms of the lines $\{y=0\}$ and $\{x=0\}$ of $\mathbb{P}^2$ in the second case. 
The induced $\mathbb{A}^1$-fibrations on $V\simeq \mathbb{F}_1\smallsetminus D_0$ are not isomorphic: the one induced by $\mathcal{P}_{p_\infty}$ has a unique degenerate scheme theoretic fiber which consists of the union of two affine lines $C_0\cap V$ and $F_\infty\cap V$ where $F_\infty\cap V$ occurs with multiplicity $2$, while the one induced by $\mathcal{P}_{p_0}$ has a unique degenerate fiber consisting of two reduced affine lines $L\cap V$ and $F_0\cap V$. 
In particular we see from Lemma \ref{lem:TrCarac} that any triangular map $V \to V$ is either in $\tr(\mathbb{F}_1,D_0,p_\infty)$, or up to left-right composition by automorphisms of $(\mathbb{F}_1,D_0)$, in $\tr(\mathbb{F}_1,D_0,p_0)$.

Now given an automorphism of $V$ considered as a birational self-map $f$ of $(\mathbb{F}_1,D_0)$ with decomposition 
$$f=\phi_n\circ\dots\circ\phi_1\colon(\mathbb{F}_1,D_0)=(S_0,B_{S_0})\dashrightarrow \dots \dashrightarrow (S_i,B_{S_i})\dashrightarrow \dots \dashrightarrow(S_n,B_{S_n})=(\mathbb{F}_1,D_0)$$ 
into triangular maps, we can find isomorphisms $\alpha_{i}\colon (\mathbb{F}_1,D_0)\rightarrow (S_{i-1},B_{S_{i-1}})$, $\beta_i\colon (\mathbb{F}_1,D_0)\rightarrow (S_i,B_{S_i})$, $i=1,\ldots, n$ such that for every $i=1,\ldots,n$, $\psi_i=\beta_{i}^{-1}\phi_i\alpha_{i}$ is an element of either $\tr(\mathbb{F}_1,D_0,p_0)$ or $\tr(\mathbb{F}_1,D_0,p_\infty)$. 
Writing $f$ as $$f=\beta_n\psi_n(\alpha_{n}^{-1}\beta_{n-1})\dots\beta_2\psi_2(\alpha_2^{-1}\beta_1)\psi_1\alpha_1^{-1}$$
where $\alpha_1^{-1},\beta_n$ and $\alpha_i^{-1}\beta_{i-1}$, $i=2,\ldots,n$ are elements of $\Aut(\mathbb{F}_1,D_0)$, we conclude that  $\Aut(V)$ is generated by  $\Aut(\mathbb{F}_1,D_0)$, $\tr(\mathbb{F}_1,D_0,p_0)$ and $\tr(\mathbb{F}_1,D_0,p_\infty)$.
In other words $\Aut(V) = \M\jun(V)$ for the family $\mathcal M = \left\lbrace (\mathrm{id}_{\F_1}, p_0),(\mathrm{id}_{\F_1}, p_\infty)  \right\rbrace $.
Remark that in this case we cannot use a single model anymore.

\subsection{Complement of a section with self-intersection $4$ in $\F_0$ }\label{expl:+4}

Here we consider the case of an affine surface $V$ admitting
a smooth completion by a smooth rational curve with self-intersection
$4$, and which is not isomorphic to the complement of a conic in $\cpd$.
According to Proposition \ref{prop:models}, the corresponding pairs $(S,B_S)$ are either $(\mathbb{F}_{0},B)$ where
$B$ is an arbitrary smooth rational curve with self-intersection
$4$ (which is automatically a section with respect to one of the two rulings) or $\left(\mathbb{F}_{2},B\right)$ where $B$ is a section of the $\mathbb{P}^{1}$-bundle structure $\pi_2\colon \mathbb{F}_{2}\rightarrow\mathbb{P}^{1}$. 
First we review these smooth  completions $(S,B_S)$ with a particular emphasis on the rational pencils $\mathcal{P}_p$ related with all possible triangular elementary maps that can occur in the factorization given by Proposition \ref{prop:main}. 
Given such a pencil $\mathcal{P}_p$, we let $\sigma:\hat{S}\rightarrow S$ be the minimal resolution of the corresponding rational map $\rho_p\colon S\dashrightarrow \mathbb{P}^1$. 
The last exceptional divisor extracted by $\sigma$ is a section $C$ of the induced $\mathbb{P}^1$-fibration $\rho_{p}\sigma:\hat{S}\rightarrow \mathbb{P}^1$ and one can prove (see \cite{BD}, Section 2) that $\hat{S}$ dominates $\pi_1:\mathbb{F}_1\rightarrow \mathbb{P}^1$ through a uniquely determined sequence of contractions $\tau :\hat{S}\rightarrow \mathbb{F}_1$ in such a way that the general fibers of $\rho_p\sigma$ coincide with that of $\pi_1\tau$ and that the strict transform $\tau_*(C)$ of $C$ coincides with the exceptional section of $\pi_1\colon\mathbb{F}_1\rightarrow \mathbb{P}^1$ with self-intersection $-1$. 
We will use this point of view to give a uniform description of the different pencils involved: see Figure \ref{fig:models}. 

\begin{figure}[ht]
$$\mygraph{
!{<0cm,0cm>;<1cm,0cm>:<0cm,1cm>::}
!{(4.2,4)  }*+{\dessinFun}="F1"
!{(0,0)  }*++{\ModeleI}="MI" 
!{(4.2,-2)  }*++{\ModeleII}="MII"
!{(8.4,0)  }*++{\ModeleIII}="MIII"
"F1"-@{<-}"MI" "F1"-@{<-}"MII" "F1"-@{<-}"MIII" 
}$$
\caption{The three models of pencils with their resolution (the index of the exceptional divisors $E_i$ corresponds to their order of construction coming from $\F_1$). }  
\label{fig:models}
\end{figure}

\subsubsection{The case $\left(\mathbb{F}_{0},B\right)$.} \label{cas:F0} 

With the bi-homogeneous coordinates introduced in \S \ref{ex:+2}, every pair $(\mathbb{F}_{0},B)$ where $B$ is an irreducible curve with $B^2=4$ is isomorphic to $(\mathbb{F}_0,D_0)$ where $D_0=\{u_1^2v_0-u_0^2v_1=0\}$. Letting $C_0=\{v_0=0\}$ and $F_0=\{u_0=0\}$ we have $D_0\sim C_0+2F_0$. 
The automorphism group of the pair $(\mathbb{F}_0,D_0)$ acts on $D_0$ with two orbits : the pair of points $\{p_0',p_\infty'\}=\{([0:1],[0:1]),([1:0],[1:0])\}$ and their complement. 
This implies in turn that there exist only two models of rational pencils ${\mathcal{P}}_{p'}$ up to conjugacy by automorphisms of $(\mathbb{F}_0,D_0)$, say ${\mathcal{P}}_{p_0'}$ and ${\mathcal{P}}_{p_1'}$ where $p_1'=([1:1],[1:1])$. They can be described as follows:

   a) The pencil $\mathcal{P}_{p_1'}$ is generated by $D_0$ and $H+F_1$ where $H=\{(u_0-3u_1)(v_0+3v_1)+8u_1v_1=0\}$ is the unique irreducible curve of type $(1,1)$ intersecting $D_0$ only in $p_1'$, with multiplicity $3$, and  $F_1$ is the fiber of the first ruling over the point $[1:1]$. 
   The restriction of this pencil to $V_0=\mathbb{F}_{0}\smallsetminus D_0$ is an $\mathbb{A}^1$-fibration $V_0\rightarrow\mathbb{A}^{1}$ with a unique degenerate fiber consisting of the disjoint union of two reduced affine lines $H\cap V_0$ and $F_{1}\cap V_0$. See Figure \ref{fig:models} (model I). 

   b) The pencil ${\mathcal{P}}_{p_0'}$ is generated by $D_0$ and $C_{0}+2F_0$ (note that $C_0$ is the tangent to $D_0$ at the point $p_0'$). 
   Its restriction to $V_0$ is an $\mathbb{A}^1$-fibration $V_0\rightarrow\mathbb{A}^{1}$ with a unique degenerate fiber consisting of the disjoint union of a reduced affine line $C_{0}\cap V_0$ and a non reduced one $F_0\cap V_0$, occurring with multiplicity $2$. See Figure \ref{fig:models} (model II).

The fact that the $\mathbb{A}^1$-fibrations associated to the pencils $\mathcal{P}_{p_0'}$ and $\mathcal{P}_{p_1'}$ are not isomorphic implies further that every triangular self-map $\phi:(\mathbb{F}_0,D_0)\dashrightarrow (\mathbb{F}_0,D_0)$ is the product of an element of $\tr(\mathbb{F}_0,D_0,p_0')$ or $\tr(\mathbb{F}_0,D_0,p_1')$ and an element of $\Aut(\mathbb{F}_0,D_0)$.

\subsubsection{The case $\left(\mathbb{F}_{2},B\right)$.} \label{cas:F2}

Letting $C_0$ be the exceptional section of $\pi_2\colon\mathbb{F}_2\rightarrow \mathbb{P}^1$ with self-intersection $-2$, a section $B$ of $\pi_2$ with self-intersection $4$ is linearly equivalent to $C_{0}+3F_\infty$ where $F_\infty$ is a fiber of $\pi_2$. 
In particular $B$ intersects $C_{0}$ transversely in a single point, which we can assume to be $C_{0}\cap F_\infty$. We identify $\mathbb{F}_{2}\smallsetminus\left(C_{0}\cup F_{\infty}\right)$ to $\mathbb{A}^{2}$ with coordinates $x$ and $y$ in such way that the induced ruling on $\mathbb{A}^{2}$ is given by the first projection and that the closures in $\mathbb{F}_2$ of the level sets of $y$ are sections of $\pi_2$ linearly equivalent to $C_0+2F_{\infty}$ (equivalently, the closure of the curve $\{y=0\}$ in $\mathbb{F}_2$ does not intersect $C_0$). 
With this choice, $B$ coincides with the closure of an affine cubic defined by an equation of the form $y=ax^{3}+bx^{2}+cx+d$. 
Since any automorphism of $\mathbb{A}^{2}$ of the form $\left(x,y\right)\mapsto\left(\lambda x+\nu,\mu y+P\left(x\right)\right)$
where $P$ is a polynomial of degree at most $2$, extends to a biregular
automorphism of $\mathbb{F}_{2}$, it follows that every pair $(\mathbb{F}_2,B)$ where $B$ is a section with self-intersection $4$ is isomorphic to $(\mathbb{F}_2,D_2)$ where $D_2$ is the closure in $\mathbb{F}_{2}$ of the affine cubic in $\mathbb{A}^{2}$ with equation $y=x^{3}$. Furthermore, the automorphism group of the pair $(\mathbb{F}_2,D_2)$ acts on $D_2$ with two orbits: the fixed point $p_{\infty}=D_2\cap C_0=D_2\cap F_\infty$ and its complement. 
Again, we have two possible models of rational pencils $\mathcal{P}_p$ up to conjugacy by automorphisms  of $(\mathbb{F}_2,D_2)$, say $\mathcal{P}_{p_0}$ where $p_0=(0,0)\subset \mathbb{A}^2\subset \mathbb{F}_2$ and $\mathcal{P}_{p_{\infty}}$:

a) The pencil $\mathcal{P}_{p_0}$ is generated by $D_2$ and $H+F_0$ where $H\sim C_{0}+2F_\infty$ is the closure 
in $\mathbb{F}_2$ of the affine line $\{ y=0\}\subset\mathbb{A}^2$ which intersects $D_2$ only in $p_0$, with multiplicity $3$, and where $F_0=\pi_2^{-1}([0:1])\subset\mathbb{F}_{2}$. Its restriction to $W_0=\mathbb{F}_{2}\smallsetminus D_2$ is an $\mathbb{A}^1$-fibration $W_0\rightarrow\mathbb{A}^{1}$ with a unique degenerate fiber consisting of the disjoint union of two reduced affine lines $H\cap W_0$ and $F_{0}\cap W_0$. A minimal resolution of this pencil is given in Figure \ref{fig:models} (model I).

b) The pencil $\mathcal{P}_{p_{\infty}}$ is generated by $D_2$ and $C_0+3F_\infty$ (remember that $D_2$ intersects $C_{0}$ transversely in $p_\infty$). Its restriction to $W_0$ is an $\mathbb{A}^1$-fibration $W_0\rightarrow\mathbb{A}^{1}$ with a unique degenerate fiber
consisting of the disjoint union of a reduced affine line $C_{0}\cap W_0$ and a non reduced one $F_\infty\cap W_0$, occurring with multiplicity $3$. See Figure \ref{fig:models} (model III).

\subsubsection{Connecting triangular maps} \label{trconnect}

By the Danilov-Gizatullin Theorem, $(\mathbb{F}_0,D_0)$ and $(\mathbb{F}_2,D_2)$ can arise as smooth  completions of a same affine surface. However, let us briefly explain how to derive this fact directly by constructing appropriate triangular maps $\phi\colon(\mathbb{F}_2,D_2)\dashrightarrow (\mathbb{F}_0,D_0)$. 
In view of Lemma \ref{lem:TrCarac} and of the description of the rational pencils given above, the only possibility is that the proper base points of such a map $\phi$ and its inverse belong respectively to the open orbits of the actions of $\Aut(\mathbb{F}_2,D_2)$ on $D_2$ and of $\Aut(\mathbb{F}_0,D_0)$ on $D_0$. 
Let us construct a particular quadratic triangular map $\Phi_0$ with $\bs(\phi)=p_{0}$ and $\bs(\phi^{-1})=p_1'$ (see Figure \ref{fig:triangF2F0} for the notations). 
\begin{figure}[ht]
$$\mygraph{
!{<0cm,0cm>;<1.1cm,0cm>:<0cm,1cm>::}
!{(-1,+4)  }*++{\dessinTrDeb}="S"
!{(1.8,+8)  }*++{\dessinTrHaut}="Y"
!{(+5,+4)  }*++{\dessinTrFin}="Shat"
!{(0,0)  }*+{\dessinTrFdeux}="F2"
!{(-3,0)  }*+{\dessinTrFun}="F1"
!{(+4,0)  }*+{\dessinTrFzero}="F0"
!{(-3,-2.5)  }*{\dessinTrProj}="P2"
"Y"-@/_2cm/@{->}"S" "Y"-@/^2cm/@{->}"Shat"
"S"-@{->}"F1" "S"-@{->}"F2"
"Shat"-@{->}"F0" "F1"-@{->}"P2"
"S"-@{-->}@/_1cm/^{\hat{\phi}}"Shat"
"F2"-@{-->}@/_1cm/_{\Phi_0}"F0"
}$$
\caption{Quadratic triangular map $\Phi_0 \colon (\F_2,D_2) \dashrightarrow (\F_0,D_0)$.}
\label{fig:triangF2F0}
\end{figure}

Let $\hat{S}\rightarrow\mathbb{F}_{2}$ be the minimal resolution of the base points of the rational pencil $\mathcal{P}_{p_0}$ as in \S \ref{cas:F2}.a) above. The surface $\hat{S}$ can also be obtained from $\mathbb{P}^{2}$ by a sequence of blow-ups with successive exceptional divisors $C$, $E_{1}$,
$E_{2}$, $E_{3}$ and $E_{4}$  in such a way that curves $l$ and $B=D_2$ correspond to the strict transforms of a
pair of lines in $\mathbb{P}^{2}$ intersecting at the center $q$
of the first blow-up.  In this setting, the strict transform of $C_{0}\subset\mathbb{F}_{2}$
in $\hat{S}$ coincides with the strict transform of a certain line
$L$ in $\mathbb{P}^{2}$ intersecting $B$ in a point distinct from
$q$. Let $\hat{\psi}:\hat{S}\dashrightarrow\hat{S}'$ be any fibered
modification of degree $2$ and let $B'$ be the strict 
transform in $\hat{S}'$ of the second exceptional divisor produced.
Then one checks that there exists a unique smooth conic $\Delta$
in $\mathbb{P}^{2}$ tangent to $B$ in $q$ and to $L$ at the point
$L\cap l$ such that its strict transform in $\hat{S}'$
is a $\left(-1\right)$-curve which intersects transversely the strict transforms of $E_{2}$ and $B'$ in general points. 
By successively contracting $E_{4},\ldots,E_{1}$, we arrive at a
new projective surface $S'$ in which the strict transform of $B'$
is a smooth rational curve with self-intersection $4$ and such that
the strict transforms of $\Delta$ and $E_{3}$ are smooth rational
curves with self-intersection $0$, intersecting transversely in a
single point. Thus $S'\simeq\mathbb{F}_{0}$ and  $\hat{\phi}:\hat{S}\dashrightarrow\hat{S}'$
descends to a triangular map $\psi:\left(\mathbb{F}_{2},D_2\right)\dashrightarrow\left(\mathbb{F}_{0},B'\right)$.
Moreover, the proper base point of $\psi^{-1}$ is located at a point
where $B'$ intersects the two rulings transversely. So there exists an isomorphism of pairs $\beta \colon(\mathbb{F}_0,B')\rightarrow (\mathbb{F}_0,D_0)$ such that $\Phi_0=\beta \psi\colon(\mathbb{F}_2,D_2)\dashrightarrow (\mathbb{F}_0,D_0)$ is triangular and maps $\mathcal{P}_{p_0}$ onto ${\mathcal{P}}_{p_1'}$.

\subsubsection{The automorphism group} 

To determine the automorphism group of an affine surface $V$ admitting a smooth completion $(S,B_S)$ with $B_S^2=4$ we can proceed as follows. 
First we may assume up to isomorphism that  $V=\mathbb{F}_2\smallsetminus D_2$. 
Then given an automorphism $\xi$ of $V$ we consider a minimal factorization of the associated birational self-map $f$ of $(\mathbb{F}_2,D_2)$ into triangular maps $$f=\phi_n\circ\dots\circ\phi_1\colon(\mathbb{F}_2,D_2)=(S_0,B_{S_0})\dashrightarrow \dots\dashrightarrow (S_i,B_{S_i})\dashrightarrow \dots \dashrightarrow (S_n,B_{S_n})=(\mathbb{F}_2,D_2)$$ where  each $S_i$ is isomorphic either to $\mathbb{F}_0$ or $\mathbb{F}_2$. 

If the intermediate surfaces $S_{j}$ are not all isomorphic to $\mathbb{F}_{2}$,
then we let $j\in\{1,\ldots,n-1\}$ and $k\in\left\{ j+1,\ldots,n\right\} $
be minimal with the property that $S_{j}\simeq S_{k-1}\simeq\mathbb{F}_{0}$
and $S_{k}\simeq\mathbb{F}_{2}$. 
Replacing if necessary $\phi_{j-1}$,
$\phi_{j}$ and $\phi_{j+1}$ by $\alpha\phi_{j-1}$, $\beta\phi_{j}\alpha^{-1}$
and $\phi_{j+1}\beta^{-1}$ for isomorphisms $\alpha\colon(S_{j-1},B_{S_{j-1}})\rightarrow(\mathbb{F}_{2,}D_{2})$
and $\beta\colon(S_{j},B_{S_{j}})\rightarrow(\mathbb{F}_{0},D_{0})$, we
may assume from the beginning that $(S_{j,-1},B_{j-1})=(\mathbb{F}_{2},D_{2})$
and $(S_{j},B_{j})=(\mathbb{F}_{0},D_{0})$. 
We may assume similarly that $(S_{k-1},B_{S_{k-1}})=(\mathbb{F}_{0},D_{0})$ and $(S_{k},B_{S_{k}})=(\mathbb{F}_{2},D_{2})$.
Now consider the triangular maps $\phi_{j}:(\mathbb{F}_{2},D_{2})\dashrightarrow(\mathbb{F}_{0},D_{0})$
and $\phi_{k}:(\mathbb{F}_{0},D_{0})\dashrightarrow(\mathbb{F}_{2},D_{2})$.
Since the $\mathbb{A}^{1}$-fibrations induced by the pencils $\mathcal{P}_{p_{\infty}}$
on $\mathbb{F}_{2}$ and $\mathcal{P}_{p_{0}'}$ on $\mathbb{F}_{0}$
are not isomorphic and not isomorphic to those associated to points
in $D_{2}\setminus\{p_{\infty}\}$ and $D_{0}\setminus\{p_{0}'\}$
it must be that $\mathcal{B}(\phi_{j})\in D_{2}\setminus\{p_{\infty}\}$
and $\mathcal{B}(\phi_{j}^{-1})\in D_{0}\setminus\{p_{0}'\}$ (see
\S \ref{cas:F0} and \ref{cas:F2}). It follows that there exist automorphisms
$\alpha_{j}\in\Aut(\mathbb{F}_{2},D_{2})$ and $\beta_{j}\in\Aut(\mathbb{F}_{0},D_{0})$
mapping $\mathcal{B}(\phi_{j})$ onto $p_{0}$ and $\mathcal{B}(\phi_{j}^{-1})$
onto $p_{1}'$ respectively. So replacing $\phi_{j-1}$, $\phi_{j}$
and $\phi_{j+1}$ by $\alpha_{j}\phi_{j-1}$, $\beta_{j}\phi_{j}\alpha_{j}^{-1}$
and $\phi_{j+1}\beta_{j}$ respectively, we may assume from the beginning
that $\mathcal{B}(\phi_{j})=p_{0}$ and $\mathcal{B}(\phi_{j}^{-1})=p_{1}'$.
For the same reason, we may assume that $\mathcal{B}(\phi_{k})=p_{1}'$
and $\mathcal{B}(\phi_{k}^{-1})=p_{0}$. 
Strictly speaking, in the case $k = j+1$, we have to insert an automorphism $\alpha \in \Aut(\mathbb{F}_{0},D_{0})$ between $\phi_j$ and $\phi_{j+1}$, which will play the same role as $\phi_{k-1} \dots \phi_{j+1}$ in the sequel.  
Recall that by construction,
the particular triangular map $\Phi_{0}:(\mathbb{F}_{2},D_{2})\dashrightarrow(\mathbb{F}_{0},D_{0})$
constructed in \S \ref{trconnect} has $\mathcal{B}(\Phi_{0})=p_{0}$
and $\mathcal{B}(\Phi_{0}^{-1})=p_{1}'$. It then follows from Corollary
\ref{cor:comp} that $\Phi_{0}^{-1}\phi_{j}:(\mathbb{F}_{2},D_{2})\dashrightarrow(\mathbb{F}_{2},D_{2})$
is an element of $\mathrm{Tr}(\mathbb{F}_{2},D_{2},p_{0})$ while
$\Phi_{0}\phi_{k}:(\mathbb{F}_{0},D_{0})\dashrightarrow(\mathbb{F}_{0},D_{0})$
belongs to $\mathrm{Tr}(F_{0},D_{0},p_{1}')$. Summing up, we can
rewrite $f$ in the form 
\begin{multline*}
f=\phi_{n}\cdots\phi_{k+1}\Phi_{0}^{-1}\left[(\Phi_{0}\phi_{k})\phi_{k-1}\cdots\phi_{j+1}\right]\Phi_{0}\left[(\Phi_{0}^{-1}\phi_{j})\cdots\phi_{1}\right]\\
=f'\Phi_{0}^{-1}\left[(\Phi_{0}\phi_{k})\phi_{k-1}\cdots\phi_{j+1}\right]\Phi_{0}\left[(\Phi_{0}^{-1}\phi_{j})\cdots\phi_{1}\right]
\end{multline*}
where $f':(\mathbb{F}_{2},D_{2})\dashrightarrow(\mathbb{F}_{2},D_{2})$
has length $\ell(f')<\ell(f)$ and where the sequences $(\Phi_{0}\phi_{k})\cdots\phi_{j+1}$
and $(\Phi_{0}^{-1}\phi_{j})\cdots\phi_{1}$ only involve intermediate
surfaces isomorphic to $\mathbb{F}_{0}$ and $\mathbb{F}_{2}$ respectively. 

Now we deduce as in \S \ref{expl:+3} that $(\Phi_{0}^{-1}\phi_{j})\cdots\phi_{1}$
can be written as a sequence of elements in $\Aut(\mathbb{F}_{2},D_{2})$,
$\mathrm{Tr}(\mathbb{F}_{2},D_{2},p_{0})$ and $\mathrm{Tr}(\mathbb{F}_{2},D_{2},p_{\infty})$.
Similarly, $(\Phi_{0}\phi_{k})\cdots\phi_{j+1}$ can be decomposed
into a sequence of elements in $\Aut(\mathbb{F}_{0},D_{0})$,
$\mathrm{Tr}(\mathbb{F}_{0},D_{0},p_{0}')$ and $\mathrm{Tr}(\mathbb{F}_{0},D_{0},p_{1}')$
and so $\Phi_{0}^{-1}\left[(\Phi_{0}\phi_{k})\cdots\phi_{j+1}\right]\Phi_{0}$
can be written as a composition of elements of the conjugates of these
groups by $\Phi_{0}:(\mathbb{F}_{2},D_{2})\dashrightarrow(\mathbb{F}_{0},D_{0})$.
We conclude by induction on the length that $\Aut(V)=\M\mathrm{TA}(V)$
with 
$$\M=\left\{ (\mathrm{id}_{\mathbb{F}_{2}},p_{0}),(\mathrm{id}_{\mathbb{F}_{2}},p_{\infty}),(\Phi_{0},p_{0}'),(\Phi_{0},p_{1}')\right\}.$$

\subsection{Automorphisms of the complement of a section with self-intersection $5$ in $\F_1$ } \label{expl:+5}

While it could seem at first glance similar to the previous ones, this case exhibits a new behavior which is more representative of the general situation: the existence of non-tame automorphisms. 

\subsubsection{Possible models and associated rational pencils} \label{expl:+5-models}

In view of Proposition \ref{prop:models} there exists only two possible types of smooth completions $(S,B_S)$ with $B_S^2=5$ and $S\smallsetminus B_S$ affine: the complements of sections with self-intersection $5$ in either $\pi_1\colon \mathbb{F}_1\rightarrow \mathbb{P}^1$ or $\pi_3\colon \mathbb{F}_3\rightarrow \mathbb{P}^1$.\\ 

1) In the first case, every such section $B$ is linearly equivalent to $C_0+3F$ where $C_0$ is the exceptional section of $\pi_1$ and $F$ a fiber. 
In particular, $B\cdot C_0=2$ and with the notations of \S \ref{expl:+3}, we have two possible pairs up to isomorphisms: first $(\mathbb{F}_1,D_1)$  where $D_1$ is the strict transform of the nodal cubic $C_1=\{x^3-z^3=xyz\}\subset \mathbb{P}^2$ with tangents $\{z=0\}$ and $\{x=0\}$ at $q_0=[0:1:0]$; and second  $(\mathbb{F}_1,D_2)$ where $D_2$ is the strict transform of the cuspidal cubic $C_2=\{x^3=z^2y\}\subset \mathbb{P}^2$ tangent to $\{z=0\}$ at $q_0$. 

a) The automorphism group of $(\mathbb{F}_1,D_2)$ acts on $D_2$ with three orbits : $p_{\infty,2}=C_0\cap F_\infty$, $p_{0,2}=\sigma^{-1}([0:0:1])$ and their complement. 
The pencil $\mathcal{P}_{p_{\infty,2}}$ is generated by $D_2$ and $C_0+3F_\infty$ and it restricts on  $V_{0,2}=\mathbb{F}_1\smallsetminus D_2$ to an $\mathbb{A}^1$-fibration with a unique degenerate fiber consisting of two affine lines $C_0\cap V_{0,2}$ and $F_\infty\cap V_{0,2}$, the second one occurring with multiplicity $3$. The pencil $\mathcal{P}_{p_{0,2}}$ is generated by $D_2$ and $L+2F_0$ where $L\sim C_0+F_\infty$ is the strict transform of the tangent line to $C_2$ at $[0:0:1]$ and $F_0=\pi_1^{-1}\pi_1(p_{0,2})$. 
Its restriction to $V_{0,2}$ is an  $\mathbb{A}^1$-fibration with a unique degenerate fiber consisting of two affine lines $L\cap V_{0,2}$ and $F_0\cap V_{0,2}$, the second one occurring with multiplicity $2$. 
Finally, for every $p\in D_2\smallsetminus(p_{0,2}\cup p_{\infty,2})$, the pencil $\mathcal{P}_{p}$ is generated by $D_2$ and $H+F_p$  where $F_p=\pi_1^{-1}(\pi_1(p))$, and $H\sim C_0+2F_\infty$ is the strict transform of the unique smooth conic in $\mathbb{P}^2$ intersecting $C_2$ with multiplicity $4$ at $\sigma(p)$ and $2$ at $q_0$. 
The induced $\mathbb{A}^1$-fibration on $V_{0,2}$ has a unique degenerate fiber consisting of two reduced affine lines $H\cap V_{0,2}$ and $F_p\cap V_{0,2}$.  

b) The automorphism group of $(\mathbb{F}_1,D_1)$ acts on $D_1$ via the dihedral group of order $6$  generated by the symmetry with respect to the point $p_1=\sigma^{-1}([1:0:1])$ and the lift of the $\mathbb{Z}_3$-action on $C_1$ defined by $\varepsilon \cdot [x:y:z]=[\varepsilon x,\varepsilon^{-1}y:z]$. In particular, the induced action has no open orbit. 

For the pair $(\mathbb{F}_1,D_1)$, we have two types of pencils : the first family consists of the pencils $\mathcal{P}_{p_{\varepsilon^k}}$  at the points $p_{\varepsilon^k}=\sigma^{-1}([1:0:\varepsilon^k])$, $k=0,1,2$. 
These are generated respectively by $D_1$ and $L_{\varepsilon^k}+2F_{\varepsilon^k}$ where $L_{\varepsilon^k}\sim C_0+F$ is the strict transform of the tangent line to $C_1$ at the point $[1:0:\varepsilon^k]$ and $F_{\varepsilon^k}=\pi_1^{-1}(\pi_1(p_{\varepsilon^k}))$. 
The induced $\mathbb{A}^1$-fibrations on $V_{0,1}=\mathbb{F}_1\smallsetminus D_1$ have a unique degenerate fiber consisting of the disjoint union of two affine lines $L_{\varepsilon^k}\cap V_{0,1}$ and $F_{\varepsilon^k}\cap V_{0,1}$, the second occurring with multiplicity $2$. 

On the other hand, for every point $p\in D_1\smallsetminus \{p_1,p_{\varepsilon},p_{\varepsilon^2}\}$, the pencil $\mathcal{P}_p$ is generated by $D_1$ and $H_p+F_p$ where $F_p=\pi_1^{-1}(\pi_1(p))$, and $H_p$ is the strict transform of the unique smooth conic in $\mathbb{P}^2$ intersecting $C_1$ with multiplicity $4$ at $\sigma(p)$ and $2$ at $q_0$ if $p\in D_1\smallsetminus C_0$ or the strict transform of one of the two smooth conics intersecting $C_1$ with multiplicity $6$ at $q_0$ otherwise. 
In each case the induced $\mathbb{A}^1$-fibration on $V_{0,1}$ has unique degenerate fiber consisting of the disjoint union of two reduced affine lines $H\cap V_{0,1}$ and $F_p\cap V_{0,1}$.   

In contrast with the previous case, the description of the action of $\Aut(\mathbb{F}_1,D_1)$ on $D_1$ implies that even though the $\mathbb{A}^1$-fibrations on $\mathbb{F}_1\setminus D_1$ induced by the pencils $\mathcal{P}_p$, $p\in D_1\smallsetminus \{p_1, p_{\varepsilon},p_{\varepsilon^2}\}$ are abstractly isomorphic, they are no longer pairwise conjugate via elements of $\Aut(\mathbb{F}_1,D_1)$. \\

2) In the second case $(\mathbb{F}_3,B)$, a section $B$ of $\pi_3$ with self-intersection $5$ is linearly equivalent to  $C_0+4F$ where  $C_0$ is the exceptional section of $\pi_3$ with self-intersection $-3$ and $F$ is a fiber of $\pi_3$. 
Since the automorphism group of $\mathbb{F}_3$ acts transitively on such sections, there exists a unique model $(\mathbb{F}_3,D_3)$ up to isomorphism of pairs. 
Furthermore, the automorphism group of $(\mathbb{F}_3,D_3)$ acts on $D_3$ with two orbits: the point $p_\infty=D_3\cap C_0$ and its complement. 
The pencil $\mathcal{P}_{p_\infty}$ is generated by $D_3$ and $C_0+4F_\infty$ where $F_\infty=\pi_3^{-1}(\pi_3(p_\infty))$ and it restricts on $W_0=\mathbb{F}_3\smallsetminus D_3$ to an $\mathbb{A}^1$-fibration over $\mathbb{A}^1$ with a unique degenerate fiber consisting of two affine lines $C_0\cap W_0$ and $F_\infty\cap W_0$, the second one occurring with multiplicity $4$. For every other point $p\in D_3\smallsetminus{p_\infty}$, the rational pencil $\mathcal{P}_p$ is generated by $D_3$ and $H+F_p$ where $F_p=\pi_3^{-1}(\pi_3(p))$, and $H\sim C_0+3F$ is the unique section of $\pi_3$ intersecting $D_3$ at $p$ only with multiplicity $4$. The induced $\mathbb{A}^1$-fibration on $W_0$ has a unique degenerate fiber consisting of two reduced affine lines $H\cap W_0$ and $F_p\cap W_0$.  

\subsection{Automorphisms of the complement of a section with self-intersection $6$ in $\F_0$ }\label{expl:+6}

In this case a further new phenomenon occurs: the existence of uncountably many isomorphy types of smooth completions $(S,B_S)$, only finitely many of these having non-trivial automorphism groups. 
Below we only summarize these possible abstract isomorphy types and observe what is strictly necessary to finish the proof of Proposition  \ref{prop:Affautos}. 
The three types of possible models of smooth completions $(S,B_S)$ of an affine surface with $B_S^{2}=6$ are $(\mathbb{F}_0,C)$, $(\mathbb{F}_2,C)$ and $(\mathbb{F}_4,C)$ where $C$ is each time an ample section with self-intersection $6$. \\

1) The case $(\mathbb{F}_4,C)$: a section $C$ with $C^2=6$ is linearly equivalent to $C_0+5F$ where $C_0$ is the exceptional section of $\pi_4\colon\mathbb{F}_4\rightarrow \mathbb{P}^1$ with self-intersection $-4$ and $F$ is a fiber of $\pi_4$. 
Note that $C$ intersects $C_0$ transversally in a unique point $p_{\infty,4}$. The automorphism group of  $\mathbb{F}_4$ acts transitively on the set of such sections and, identifying $\mathbb{F}_4\smallsetminus (C_0\cup F_\infty)$ where $F_\infty=\pi_4^{-1}(\pi_4(p_{\infty,4}))$ with $\mathbb{A}^2$ in a similar way as in \S \ref{expl:+4}, we may assume that $C=C_4$ is the closure of the affine quintic $\{y=x^5\}\subset \mathbb{A}^2$. 
The automorphism group of $(\mathbb{F}_4,C_4)$ acts on $C_4$ with two orbits: the point $p_{\infty,4}=C_4\cap C_0$ and its complement. \\

2) The case $(\mathbb{F}_2,C)$: a section $C$ with $C^2=6$ is linearly equivalent to $C_0+4F$ where $C_0$ is the exceptional section of $\pi_2\colon\mathbb{F}_2\rightarrow \mathbb{P}^1$ with self-intersection $-2$ and $F$ is a fiber of $\pi_2$. 
Such a section intersects $C_0$ either in a single point with multiplicity two or transversally in two distinct points.

a) In the first case, up to an automorphism of $\mathbb{F}_2$ we may assume that $C=C_{2,1}$ is the closure in $\mathbb{F}_2$ of the intersection of the quartic $\{yz^3=x^4\}\subset \mathbb{P}^2$ with $\mathbb{A}^2$. 
The group $\Aut(\mathbb{F}_2,C_{2,1})$ acts on $C_{2,1}$ with three orbits: the point $p_{\infty,2}=C_{2,1}\cap C_0$, the point $p_{0,2}=(0,0)\subset\mathbb{A}^2\subset \mathbb{F}_2$ and their complement.

b) In the second case, up to an automorphism of $\mathbb{F}_2$ we may assume that $C=C_{2,2}$ is the closure in $\mathbb{F}_2$ of the intersection of the quartic $\{xyz^2=x^4-z^4\}\subset  \mathbb{P}^2$ with $\mathbb{A}^2$. 
The group $\Aut(\mathbb{F}_2,C_{2,2})$ acts on $C_{2,2}$ via the dihedral group of order $8$ generated by the symmetry with center at the point  
$p_s=[1:0:1]$ and the lift of the $\mathbb{Z}_4$-action on $\{xyz^2=x^4-z^4\}$ defined by $\varepsilon \cdot [x:y:z]=[\varepsilon x:\varepsilon^{-1} y : z]$.  \\

3) The case $(\mathbb{F}_0,C)$: a section $C$ of the first projection $\pi_0=\mathrm{pr}_1\colon\mathbb{F}_0=\mathbb{P}^1\times \mathbb{P}^1$ with $C^2=6$ is linearly equivalent to $C_0+3F$ where $C_0$ is a fiber of $\mathrm{pr}_2$ and $F$ a fiber of $\pi_0$. 
Such sections can be first roughly divided into three classes according to the number of fibers of the second projection which intersect $C$ with multiplicity $3$. 

a) If there exist at least two such fibers intersecting $C$ with multiplicity $3$ then the pair $(\mathbb{F}_0,C)$ is isomorphic to $(\mathbb{F}_0,C_{0,0})$ where $C_{0,0}=\{u_1^3v_0+u_0^3v_1=0\}$. 
The group  $\Aut(\mathbb{F}_0,C_{0,0})$ is then isomorphic to $\mathbb{C}^*\times \mathbb{Z}_2$ where $\mathbb{C}^*$ acts by $\lambda\cdot ([u_0:u_1],[v_0:v_1])=([\lambda u_0:u_1],[\lambda^3v_0:v_1])$ and where $\mathbb{Z}_2$ exchanges $u_0,v_0$ with $u_1,v_1$.

b) If there exists a unique fiber of $\mathrm{pr}_2$ intersecting $C$ with multiplicity $3$, then the pair $(\mathbb{F}_0,C)$ is isomorphic to $(\mathbb{F}_0,C_{0,1})$ where $C_{0,1}=\{u_1^3v_0+u_0^2(u_0+u_1)v_1=0\}$. 
Its automorphism group is isomorphic to $\mathbb{Z}_2$, acting via $([u_0:u_1],[v_0:v_1])\mapsto ([-u_0-2u_1/3:u_1],[-v_0-4v_1/27:v_1])$.

c) Finally, if there is no fiber of $\mathrm{pr}_2$ intersecting $C$ with multiplicity $3$ then the pair $(\mathbb{F}_0,C)$ is isomorphic to a pair of the form $(\mathbb{F}_0,C_{1,b})$ where $C_{1,b}=\{u_1^2(u_0+u_1)v_0+u_0^2(u_0+bu_1)v_1=0\}$ for some $b\in\mathbb{C}\smallsetminus\{0,1\}$ such that the polynomial $s(t)=2t^2+(b+3)t+2b$ has simple roots (this last condition guarantees precisely that $C_{1,b}$ cannot intersect a fiber of $\mathrm{pr}_2$ with multiplicity $3$). 
Furthermore, such a curve $C_{1,b}$ has exactly four horizontal tangents at the following points $P_i(b)=(p_i(b),q_i(b))$: $P_1(b)=([0:1],[0:1])$, $P_2(b)=([1:0],[1:0])$, $P_3(b)=([r_1:1],[r_1^2(r_1+b)/(r_1+1):1])$ and $P_4(b)=([r_2:1],[r_2^2(r_2+b)/(r_2+1):1])$, where $r_1,r_2\in \mathbb{C}\smallsetminus\{-1\}$ are the roots of $s(t)$. 
It follows from this description that two pairs $(\mathbb{F}_0,C_{1,b})$ and $(\mathbb{F}_0,C_{1,b'})$ are isomorphic only if there exists a permutation $\sigma\in\mathfrak{S}_4$ such that the cross-ratios of $(p_1(b),p_2(b),p_3(b),p_4(b))$ (resp. $(q_1(b),q_2(b),q_3(b),q_4(b))$) and $(p_{\sigma(1)}(b'),p_{\sigma(2)}(b'),p_{\sigma(3)}(b'),p_{\sigma(4)}(b'))$ (resp. $(q_{\sigma(1)}(b'),q_{\sigma(2)}(b'),q_{\sigma(3)}(b'),q_{\sigma(4)}(b'))$) are equal. 
A direct computation implies in turn that there exists uncountably many isomorphy classes of such pairs all having a finite group of automorphism of order at most $24$ and that this group is in fact trivial except for finitely many of these.\\

\bibliographystyle{amsplain}
\bibliography{biblio_surface}

\end{document}